\newcommand\org@maketitle{}
\newcommand\@authors{}
\let\org@maketitle\maketitle
\def\maketitle{%
	% fixing authors for amsbook
	\let\@authors\authors
	\nxandlist{; }{ and }{; }\@authors
	% end fix
	\hypersetup{
		linktocpage=true,
		pdftitle={\@title},
                pdfauthor={\@authors},
                pdfsubject={\subjclassname. \@subjclass},
		pdfkeywords={\@keywords}
	}%
	\org@maketitle
}
\DeclareMathAlphabet{\mathcal}{OMS}{cmsy}{m}{n}
\renewcommand{\PrintDOI}[1]{\doi{#1}}
\numberwithin{equation}{section}
\newtheorem{theorem}{Theorem}[section]
\newtheorem{lemma}[theorem]{Lemma}
\theoremstyle{definition}
\newtheorem{definition}[theorem]{Definition}
\theoremstyle{remark}
\newtheorem{remark}[theorem]{Remark}
\newcommand{\cH}{{\mathcal H}}
\newcommand{\cD}{{\mathcal D}}
\newcommand{\cQ}{{\mathcal Q}}
\newcommand{\al}{\alpha}
\newcommand{\be}{\beta}
\newcommand{\g}{\gamma}
\newcommand{\de}{\delta}
\newcommand{\la}{\lambda}
\newcommand{\ka}{\kappa}
\newcommand{\R}{\mathbb{R}}
\newcommand{\vp}{\varphi}
\newcommand{\bb}{\mathbf{b}}
\newcommand{\bg}{\mathbf{g}}
\newcommand{\bh}{\mathbf{h}}
\newcommand{\bff}{\mathbf{f}}
\newcommand{\bq}{\mathbf{q}}
\newcommand{\bbQ}{\mathbb{Q}}
\newcommand{\D}{\nabla}
\newcommand{\supp}{\operatorname{supp}}
\renewcommand{\div}{\operatorname{div}}
\newcommand{\dist}{\operatorname{dist}}
\newcommand{\mean}[1]{\langle #1\rangle}
\newcommand{\DMO}{\operatorname{DMO}}
\newcommand{\tr}{\operatorname{tr}}
\def\Xint#1{\mathchoice
  {\XXint\displaystyle\textstyle{#1}}%
  {\XXint\textstyle\scriptstyle{#1}}%
  {\XXint\scriptstyle\scriptscriptstyle{#1}}%
  {\XXint\scriptscriptstyle\scriptscriptstyle{#1}}%
  \!\int}
\def\XXint#1#2#3{{\setbox0=\hbox{$#1{#2#3}{\int}$}
    \vcenter{\hbox{$#2#3$}}\kern-.5\wd0}}
\def\dashint{\Xint-}
\mathchardef\ordinarycolon\mathcode`\:
\author{Hongjie Dong}
\author{Seongmin Jeon}
\address[H. Dong]{Division of Applied Mathematics
\newline\indent
Brown University
\newline\indent
182 George Street, Providence RI 02912, USA}
\email{hongjie\_dong@brown.edu}
\address[S. Jeon]{Department of Mathematics Education
\newline\indent
Hanyang University
\newline\indent
222 Wangsimni-ro, Seongdong-gu, Seoul 04763, Republic of Korea}
\email{seongminjeon@hanyang.ac.kr}
\thanks{H. Dong is partially supported by the NSF under agreement DMS-2350129. S. Jeon was supported by the Academy of Finland grant 347550, the research fund of Hanyang University(HY-202400000003278), and the National Research Foundation of Korea(NRF) grant 
funded by the Korea government(MSIT) (RS-2025-24803159).}
\title[Schauder type estimates for degenerate or singular systems]{Schauder type estimates for degenerate or singular parabolic systems with partially DMO coefficients}
\subjclass[2020]{35B45, 35B65, 35K65, 35K67}
\keywords{Degenerate or singular parabolic systems; Partially Dini mean oscillation; higher-order boundary Harnack principle}
\begin{document}
\begin{abstract}
We study elliptic and parabolic systems in divergence form with degenerate or singular coefficients. Under the conormal boundary condition on the flat boundary, we establish boundary Schauder type estimates when the coefficients have partially Dini mean oscillation. Moreover, as an application, we achieve $k^{\text{th}}$ higher-order boundary Harnack principles for uniformly parabolic equations with Hölder coefficients, extending a recent result in \cite{AudFioVit24b} from $k\ge2$ to any $k\ge1$.

\end{abstract}

\maketitle
%\tableofcontents

\section{Introduction}

\subsection{Degenerate or singular parabolic systems}\label{subsec:system}

Let $\al\in(-1,\infty)$ and $Q_1^+=(-1,0)\times B_1^+\times(0,1)\subset\R^{n+1}$, $n\ge2$. We consider a parabolic system in divergence form with a weight $x_n^\al$
\begin{align}\label{eq:pde-par}
    \begin{cases}
        D_\g(x_n^\al A^{\g\delta}D_\delta u)-x_n^\al\partial_tu=\div(x_n^\al\bg)\\
        \lim_{x_n\to0+}x_n^\al(A^{n\delta}D_\delta u-g^n)=0
    \end{cases}
    \quad\text{in }Q_1^+,
\end{align}
where $u=(u^1,\ldots,u^m)^{\tr}$, $m\ge1$, is a (column) vector-valued function. The coefficients $A^{\gamma\delta}=(a^{\gamma\delta}_{ij})^m_{i,j=1}$ are $m\times m$-matrices, $1\le \g,\de\le n$, and satisfies the following conditions for some constant $\la>0$
\begin{align}
\label{eq:assump-coeffi}
\begin{cases}
    \text{the uniform parabolicity: }\,\,\,\lambda|\xi|^2\le a^{\gamma\delta}_{ij}(X)\xi_i^\gamma\xi_j^\delta,\quad\xi=(\xi_i^\g)\in\R^{m\times n},\,\,\,X\in Q_1^+, \\
    \text{the uniform boundedness: }\,\,\,|A^{\g\de}(X)|\le 1/{\lambda},\quad X\in Q_1^+.
\end{cases}
\end{align}
Here $\bg=(g^1,\ldots, g^n)$, where $g^\g=(g^\gamma_1,\ldots,g^\gamma_m)^{\tr}$, $1\le\g\le n$. Throughout this paper, the summation convention over repeated indices is used. We sometimes denote $A^{\g\de}$ by $A$ for abbreviation.

We say that $u\in H^1(Q_1^+,x_n^\al dX)$ is a (weak) solution of \eqref{eq:pde-par} if
$$
\int_{Q_1^+}x_n^\al(\mean{A^{\g\de}D_\de u, D_\g\phi}-u\partial_t\phi)dX=\int_{Q_1^+}x_n^\al g^\g \cdot D_\g\phi\,dX
$$
holds for every test function $\phi\in C^\infty_c(Q_1;\R^m)$. When $\al\ge1$, it is sufficient to test the system with $\phi\in C^\infty_c(Q_1^+;\R^m)$. This can be seen by using a suitable cutoff function and the Cauchy-Schwarz inequality.

Equations with singular or degenerate coefficients appear widely in both pure and applied mathematics, such as in equations involving fractional Laplacian or fractional heat operator, porous medium equations, and problems in mathematical finance; see e.g., \cites{CafSil07, StiTor17, DasHam98, DasHamLee01, Hes93, FeeCam14}, among many others.

In this paper, we are interested in studying the degenerate-singular system \eqref{eq:pde-par} when coefficients and data satisfy partially Dini mean oscillation (partially DMO) conditions. Elliptic and parabolic systems with partially DMO coefficients arise from the linear elastic laminates and composite materials.

\begin{definition}\label{def:partial-DMO}
For $\theta\in(-1,\infty)$, let $d\mu_\theta:=x_n^\theta dX$. We say that a measurable function $f$ in $Q_2^+$ is of \emph{$L^1(d\mu_\theta)$-partially Dini mean oscillation with respect to $X'$}, denoted as $L^1(d\mu_\theta)\text{-DMO}_{X'}$, in $Q_1^+$ if the function $\eta_f^\theta:(0,1)\to[0,\infty)$ defined by
    \begin{align}\label{eq:partial-DMO}
    \eta_f^\theta(r):=\sup_{X_0\in Q_1^+}\dashint_{Q_r^+(X_0)}\left|f(X)-\dashint_{Q_r'(X_0')}f(Y',x_n)dY'\right|d\mu_\theta(X)
    \end{align}
    is a Dini function, i.e., 
    $$
    \int_0^1\frac{\eta_f^\theta(r)}rdr<\infty.
    $$
\end{definition}

When $\al=0$, interior gradient estimates for solutions of \eqref{eq:pde-par} with partially DMO coefficients can be found in \cite{DonXu21}. More recently, Schauder type estimates for degenerate-singular equations with DMO coefficients in the elliptic setting were established in \cite{DonJeoVit23}. Therefore, our paper can be seen as a generalization of \cite{DonXu21} from ``uniformly parabolic'' to ``degenerate-singular'' systems as well as an extension of \cite{DonJeoVit23} from the ``elliptic'' to the ``parabolic'' framework with a more general type of coefficients. The arguments in this paper also apply to the elliptic setting and scalar equations.

\subsection{The boundary Harnack Principle}
In \cite{TerTorVit22}, the authors showed that Schauder type estimate for degenerate equations has an important application to higher-order boundary Harnack principle. Roughly speaking, it concerns higher regularity of the ratio of two solutions, up to a portion of the boundary where they both vanish. The $C^{k,\be}$ regularity for the quotient of two harmonic functions was first proved in \cite{DeSSav15}, and this result was later extended to caloric functions in \cite{BanGar16}. Recently, \cite{TerTorVit22} established the similar result for elliptic equations in divergence form with Hölder type coefficients and a right-hand side by using the aforementioned idea. The corresponding parabolic result can be found in \cites{AudFioVit24a, AudFioVit24b}. We also refer to \cite{Kuk22} for non-divergence equation in the parabolic setting, and \cites{JeoVit24, DonJeoVit23} for equations with Dini and DMO type coefficients (see \cite{DonJeoVit23}*{Definition~1.1}).

In this paper, inspired by the original idea in \cite{TerTorVit22}, we also investigate the higher-order boundary Harnack principle for the parabolic equation in divergence form with Hölder-type coefficients.

We note that since the $u^i$'s in the system \eqref{eq:pde-par} are coupled, boundary Harnack principles cannot be expected for vector-valued solutions. In fact, even Harnack principles do not generally hold for such systems.

%%%%%%%%%%%%%%%%%%%%%%%%%%%%%%%%%%%%%%%%%%%%%%%%%%%%%%%%%%%%%%%%%%%%%%%%%%%%%%%%%%%%%%%%%%%%%%%%%%%%%%%%%%%%%

\subsection{Main results}

We now state our first main result. Here and henceforth we use notation from Section~\ref{subsec:notation}.

\begin{theorem}[Schauder type estimates]
    \label{thm:DMO-HO-reg-par}
    Let $k\in \mathbb{N}$ and suppose that $u$ is a solution of \eqref{eq:pde-par}.
    If $A$ satisfies \eqref{eq:assump-coeffi} and $A,\bg\in \cH^{k-1}_{\al^-}(\overline{Q_1^+})$, then $u\in \mathring{C}^{k/2,k}(\overline{Q^+_{1/2}})$.
\end{theorem}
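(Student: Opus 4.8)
The plan is to argue by induction on $k$, following the scheme of perturbation/freezing arguments combined with a Campanato-type characterization of Hölder spaces adapted to the degenerate-singular geometry. The base case $k=1$ is essentially a boundary gradient estimate: one shows that $u\in\mathring C^{1/2,1}$ near the flat boundary, meaning $u$ and the tangential derivatives $D_{x'}u$ are Hölder, the weighted conormal flux $x_n^\al(A^{n\delta}D_\delta u-g_n)$ vanishes appropriately, and $x_n^\al D_n u$ has the expected modulus. For this I would freeze the coefficients at a boundary point $X_0$, compare $u$ with the solution $v$ of the constant-coefficient degenerate system $D_\gamma(x_n^\al \bar A^{\gamma\delta}D_\delta v)-x_n^\al\partial_t v=0$ with the same conormal data, use the known decay/Liouville-type estimates for such model solutions (as in \cite{DonJeoVit23} in the elliptic case, now parabolically), and control the error by the $L^1(d\mu_{\al^-})$-DMO modulus $\eta^{\al^-}$ of $A$ together with the Dini modulus of $\bg$. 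Summing the resulting geometric-type decay of $\dashint_{Q_r^+(X_0)}|u-P_{X_0}|\,d\mu_\al$ over dyadic scales, where $P_{X_0}$ is the first-order ``polynomial'' in the $x',t$ variables, gives the $\mathring C^{1/2,1}$ bound via the Campanato characterization.

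For the inductive step, assuming the result for $k-1$, one differentiates the system in the tangential directions $x'$ (which is legitimate since $\partial_{x'}$ commutes with the degenerate structure in $x_n$). The tangential derivatives $w=D_{x'}u$ then solve a system of the same type \eqref{eq:pde-par} but with a right-hand side that now involves $D_{x'}A\cdot D u$ and $D_{x'}\bg$; since $A,\bg\in\cH^{k-1}_{\al^-}$, these derived coefficients and data lie in $\cH^{k-2}_{\al^-}$, so the inductive hypothesis applies and yields $D_{x'}u\in\mathring C^{(k-1)/2,k-1}$. It remains to recover the normal and time regularity, i.e. to upgrade to full $\mathring C^{k/2,k}$ control including all mixed derivatives $D_{x'}^a\partial_t^b(x_n^\al D_n u)$ and $\partial_t$-derivatives. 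This is done by using the equation itself: the PDE expresses $x_n^{-\al}D_n(x_n^\al A^{nn}D_n u)$ — equivalently the ``good'' second-order operator in $x_n$ — in terms of quantities that, by the previous step, are already known to have the required regularity, so an ODE-in-$x_n$ argument (integrating the conormal relation from the boundary, where the flux vanishes) plus the parabolic scaling $\partial_t\sim D^2$ propagates the Hölder regularity to the remaining derivatives.

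The main obstacle, as usual in this circle of results, is the boundary comparison estimate at the model level: establishing the correct decay rate for $\dashint_{Q_r^+}|v-P|\,d\mu_\al$ where $v$ solves the constant-coefficient degenerate conormal problem and $P$ is the degree-$k$ Taylor-type polynomial, \emph{and} showing that the error term $u-v$ is genuinely higher-order, controlled by $r\,\eta^{\al^-}_A(r)$ plus the Dini tail $\int_0^r \eta^{\al^-}_{\bg}(s)\,s^{-1}\,ds$. The delicate points are: (i) the weight $x_n^\al$ is only $A_2$-Muckenhoupt for $\al\in(-1,1)$, so for $\al\ge1$ one must work with the restricted class of test functions noted after \eqref{eq:pde-par} and be careful about which derivatives are ``allowed'' to touch the boundary; (ii) the partially-DMO hypothesis only gives oscillation control of $A$ in the $X'$ variables, not in $x_n$, so the frozen coefficient must be frozen only in $X'$ while retaining full $x_n$-dependence — this forces the model problem to be an $x_n$-dependent (but $X'$-frozen) operator, whose regularity theory is the technical heart and is precisely what \cite{DonJeoVit23} supplies in the elliptic setting and what must be reproved parabolically here. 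Once the model estimate is in hand, the iteration and the Campanato-to-Hölder passage are routine, and the elliptic and scalar cases follow by the same argument with $\partial_t$ dropped.
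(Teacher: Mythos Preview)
Your outline captures the spatial part of the argument (freezing in $X'$, model problem with $x_n$-dependent coefficients, Campanato iteration, and the ODE-in-$x_n$ recovery of $D_{nn}u$), and your remarks (i)--(ii) correctly identify the structural subtleties. But there is a genuine gap in the treatment of time regularity, and it is precisely the point the paper singles out as the main new difficulty.

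First, already for $k=1$ the space $\mathring C^{1/2,1}$ demands more than Lipschitz-in-$x$ and continuity of $(\nabla_{x'}u,U)$: one must also show that $\delta_{t,h}u/h^{1/2}$ tends to $0$ uniformly and is uniformly continuous in $X$. This does \emph{not} fall out of the spatial Campanato estimate you describe; the paper runs a separate Campanato argument (Section~3.2) with a different functional $\varphi(X_0,r)=\inf_{q}\big(\dashint|u-u_0^{X_0,r}-q|^p\,d\mu\big)^{1/p}$, where $q$ ranges over affine-type classes $\mathbb A_1^{X_0,r},\mathbb A_2^{X_0,r}$ built from the $x_n$-frozen coefficients, and then extracts the $C^{1/2}_t$-type control from the decay of $\varphi$.

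Second, and more seriously, your inductive step breaks at $k=2$. You propose to get $\partial_t$-regularity from ``parabolic scaling $\partial_t\sim D^2$'', but the equation does not let you trade $\partial_t u$ for second spatial derivatives without already knowing $D_{nn}u$ is continuous, and you cannot differentiate the system in $t$ because $A,\bg\in\cH^1_{\al^-}$ are only $C^{1/2,1}$ in time, not $C^1_t$. The paper's route is to pass to the time difference quotient $u^h=\delta_{t,h}u/h^{1/2}$, observe that it solves a system of the same form with data $\hat g^h_\gamma=g^h_\gamma-(A^{\gamma\delta})^hD_\delta u(\cdot-h)\in\cH^0_{\al^-}$ \emph{uniformly in $h$} (this is where the $\cH^{k-1}$ hypothesis on the time difference quotients of $A,\bg$ is used), apply the $k=1$ result to $u^h$, and then recover $\partial_t u$ from second time differences via a dyadic telescoping identity $\delta_{t,h}f=2^{-j}\delta_{t,2^jh}f+\sum 2^{-i-1}\delta_{t,2^ih}^2 f$ together with Stein's characterization of H\"older/Zygmund classes by second differences. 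Only after $\partial_t u$ and $D_{x'}\nabla u$ are in hand does the ODE-in-$x_n$ argument give $D_{nn}u$. For $k\ge3$ one can finally differentiate in $t$ (since then $A\in C^{1,2}$), and the induction you sketch closes; but the bridge $k=1\to k=2$ requires the finite-difference machinery, which your proposal omits.
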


We impose $A,\bg\in\cH^{k-1}_{\al^-}$ instead of the more natural condition $\cH_\al^{k-1}$ due to some technical difficulties in establishing the regularity of the solution in time; see the proof of Lemma~\ref{lem:mean-diff-est}. However, when the solution is time-independent, our argument remains valid under the weaker condition $A,\bg\in \cH_{\al}^{k-1}$; see also Theorem~\ref{thm:DMO-reg}. Thus, Theorem~\ref{thm:DMO-HO-reg-par} generalizes its elliptic counterpart in \cite{DonJeoVit23}.

\begin{remark}
    \label{rem:DMO-HO-reg-par}
    In Theorem~\ref{thm:DMO-HO-reg-par}, $D_x^i\partial_t^ju$, $i+2j=k$, are continuous in $X$ except $D_n^ku$.
\end{remark}

In fact, $D_n^ku$ is generally discontinuous in $x_n$. A simple counterexample is $u_0(x_n)$ in \eqref{eq:u_0-def}, which is a solution to equation \eqref{eq:u_0-pde}.

Recently, a similar result was obtained for scalar equations in \cites{AudFioVit24a, AudFioVit24b}, where the coefficients are in the parabolic Hölder space. Their approach relies on the blowup argument and a Liouville theorem, which seem to be specialized for scalar equations and the homogeneous power-type moduli. In contrast, we establish Theorem~\ref{thm:DMO-HO-reg-par} through a novel   application of Campanato's method and finite difference approximation.

For some parts of the proof, we follow the lines in \cite{DonJeoVit23}, which achieved a similar result in the elliptic context with DMO coefficients. However, the main difficulty arising in our parabolic framework is to show that when $A,g\in \cH$, a difference quotient $\frac{u(t+h,x)-u(t,x)}{h^{1/2}}$ has a Dini decay; see Theorem~\ref{thm:sol-time-est}. This step is crucial for extending Theorem~\ref{thm:DMO-HO-reg-par} from $k=1$ to $k=2$. To prove the decay estimate, we adopt Campanato’s method, inspired by the approach used in \cite{DonXu21} for uniformly parabolic equations. However, in our case, the presence of degenerate or singular terms significantly increases the complexity of the proof.

One can infer from the proof of Theorem~\ref{thm:DMO-HO-reg-par} that derivatives of the solution $u\in \mathring{C}^{k/2,k}$ has a modulus of continuity comparable with
\begin{align}\label{eq:mod-conti-sigma}\begin{split}
    \sigma_k(r):=&\left(\|u\|_{L^1(Q_4^+,d\mu)}+\|\bg\|_{L^\infty(Q_4^+)}+\int_0^1\frac{\eta_{\bg,k}^{\al^-}(\rho)}\rho d\rho\right)\times \\
    &\quad\times\left(\int_0^r\frac{\eta_{A,k}^{\al^-}(\rho)}{\rho}d\rho+r^\be\int_r^1\frac{\eta_{A,k}^{\al^-}(\rho)}{\rho^{1+\be}}d\rho+r^\be
 \right)\\
 &\qquad+\int_0^r\frac{\eta_{\bg,k}^{\al^-}(\rho)}\rho d\rho+r^\be\int_r^1\frac{\eta_{\bg,k}^{\al^-}(\rho)}{\rho^{1+\be}}d\rho
\end{split}\end{align}
for any chosen $0<\be<1$. More precisely, if $i+2j=k$ for some $i,j\in \mathbb{Z}_+$, then
$$
|D_x^i\partial_t^ju(t,x',x_n)-D_x^i\partial_t^ju(s,y',x_n)|\lesssim\sigma_k(|x'-y'|+|t-s|^{1/2}).
$$
Moreover, if $i+2j=k-1$ for some $i,j\in \mathbb{Z}_+$ and $0<h<1/4$, then
\begin{align*}
    \left|\frac{\delta_{t,h}D_x^i\partial_t^ju}{h^{1/2}}(t,x)-\frac{\delta_{t,h}D_x^j\partial_t^ju}{h^{1/2}}(s,y)\right|\lesssim \sigma_k(|x-y|+|t-s|^{1/2}).
\end{align*}
In particular, if $A,\bg$ belong to $C^{\frac{k+\bar\be-1}2,k+\bar\be-1}$ for some $\bar\be\in(0,1)$ instead of $\cH^{k-1}$, one can follow the proof of Theorem~\ref{thm:DMO-HO-reg-par} with $\be\in(\bar\be,1)$ to obtain $u\in C^{\frac{k+\bar\be}2,k+\bar\be}$. This means our result fully generalizes the parabolic Schauder estimates from the Hölder setting to the partially DMO setting.

\medskip

Our next main result concerns the boundary Harnack principle.

\begin{theorem}[Parabolic boundary Harnack principle]
    \label{thm:par-BHP}
Let $k\in\mathbb{N}$ and $\be\in(0,1)$. Suppose that two functions $u,v:Q_1^+\to\R$ satisfy
\begin{align*}
    \begin{cases}
        \div(A\D u)-\partial_tu=f,\,\,\, u>0&\text{in }Q_1^+,\\
        \div(A\D v)-\partial_tv=g&\text{in }Q_1^+,\\
        u=v=0,\,\,\,\partial_{n} u>0&\text{on }Q_1',
    \end{cases}
\end{align*}
Assume that $A:Q_1^+\to\R^{n\times n}$ is symmetric and satisfies for some $\la>0$ that $\la|\xi|^2\le\mean{A(X)\xi,\xi}$ and $|A(X)|\le1/\la$ for all $\xi\in\R^n$ and $X\in Q_1^+$. If $A,f,g\in C^{\frac{k-1+\be}2,k-1+\be}(\overline{Q_1^+})$, then $v/u\in C^{\frac{k+\be}2,k+\be}(\overline{Q^+_{1/2}})$.
\end{theorem}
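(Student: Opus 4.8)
The plan is to reduce the boundary Harnack principle for the quotient $w := v/u$ to the degenerate Schauder estimate in Theorem~\ref{thm:DMO-HO-reg-par}, following the strategy pioneered in \cite{TerTorVit22} and its parabolic analogues in \cites{AudFioVit24a, AudFioVit24b}. The first step is to show that $w$ itself solves, in $Q_1^+$ (after possibly shrinking the domain so that $u>0$ and $\pa_n u>0$ remain valid up to $Q_1'$), a degenerate parabolic equation of the type \eqref{eq:pde-par} with weight $u^2$ playing the role of $x_n^\al$. Formally, since $v = uw$, a direct computation gives
\begin{align*}
    \div(u^2 A\D w) - u^2\pa_t w = u\big(\div(A\D v) - \pa_t v\big) - v\big(\div(A\D u) - \pa_t u\big) = u f - v g = u\big(f - wg\big),
\end{align*}
so that $w$ satisfies $\div(u^2 A\D w) - u^2\pa_t w = \div(u^2\bg_w) + \,(\text{lower order})$ after absorbing $u(f-wg)$ into a divergence-form right-hand side; here one writes $u f - v g = \div(\bh) - \text{(something)}$ using that $f,g$ are Hölder and integrating, or one keeps it as a non-divergence inhomogeneity and checks that the arguments of Section on Schauder estimates accommodate it. The conormal condition $\lim_{x_n\to0}u^2(A^{n\delta}D_\delta w) = 0$ holds automatically because $u$ vanishes on $Q_1'$ and $w$ together with its relevant derivatives are bounded near the boundary.

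The second, and genuinely delicate, step is to verify that the weight $u^2$ is comparable to $x_n^{\,2}$ near $Q_1'$ and, more importantly, that after the change of variables / comparison the resulting coefficients lie in the class $\cH^{k-1}_{\al^-}$ with $\al = 2$. By the boundary regularity of $u$ — which, since $u$ solves a uniformly parabolic equation with $C^{\frac{k-1+\be}{2},k-1+\be}$ coefficients and vanishes on the flat boundary with $\pa_n u > 0$, belongs to $C^{\frac{k+\be}{2},k+\be}(\overline{Q^+_{1/2}})$ by the classical parabolic Schauder theory (this is the $\al=0$ specialization one may also extract from Theorem~\ref{thm:DMO-HO-reg-par}) — we can write $u(X) = x_n\, a(X)$ with $a$ continuous, strictly positive, and of class $C^{\frac{k-1+\be}{2},k-1+\be}$ up to $Q_1'$. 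Then $u^2 = x_n^2 a^2$, and the equation for $w$ becomes a genuine instance of \eqref{eq:pde-par} with $\al = 2$: the weighted coefficient matrix is $\widetilde A := a^2 A$, which is uniformly parabolic and bounded with the constant $\la$ replaced by $\la\min a^2$ resp.\ $\la^{-1}\max a^2$, and one must check $\widetilde A \in \cH^{k-1}_{2^-}$ and that the new data $\bg_w$ (built from $u$, $v$, $f$, $g$, $\D u$) lies in $\cH^{k-1}_{2^-}$ as well. This last check is where I expect the main obstacle: one needs the precise definition of the anisotropic Hölder/DMO scale $\cH^{k-1}_{\al^-}$ from Section~\ref{subsec:notation} and must track that multiplication by the smooth positive factor $a^2$, and the various products and quotients appearing in $\bg_w$, preserve membership in that scale — in particular that no loss occurs at the boundary from the interplay between the $x_n$-weight and the tangential-only Dini condition. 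A subtlety is that $w$ is a priori only known to be, say, bounded (or $C^{\be/2,\be}$ by De Giorgi--Nash--Moser for the degenerate equation with $A_2$-weight $u^2$, since $2 \in (-1,\infty)$), so one needs a bootstrap: apply Theorem~\ref{thm:DMO-HO-reg-par} with $k=1$ first to upgrade $w$ to $\mathring C^{1/2,1}$, then feed the improved regularity of $w$ (hence of $\bg_w$) back in and apply the theorem with successively larger $k$, at each stage re-verifying the $\cH$-regularity of the data.

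Once these two steps are in place, Theorem~\ref{thm:DMO-HO-reg-par} applied to the equation for $w$ with exponent $\al = 2$ yields $w \in \mathring C^{k/2,k}(\overline{Q^+_{1/2}})$, and by Remark~\ref{rem:DMO-HO-reg-par} all the derivatives $D_x^i\pa_t^j w$ with $i+2j = k$ are continuous up to $Q'_{1/2}$ except possibly $D_n^k w$. To conclude $w \in C^{\frac{k+\be}{2},k+\be}$ one then invokes the more quantitative statement in the text — namely that the modulus of continuity of the top-order derivatives is controlled by $\sigma_k(r)$ from \eqref{eq:mod-conti-sigma} — specialized to the Hölder case: since $\widetilde A$ and $\bg_w$ belong to $C^{\frac{k-1+\be}{2},k-1+\be}$, the discussion following Remark~\ref{rem:DMO-HO-reg-par} (choosing the auxiliary exponent in $(\be,1)$) gives $w \in C^{\frac{k+\be}{2},k+\be}(\overline{Q^+_{1/2}})$, including the normal derivative, because in the Hölder regime the obstruction to continuity of $D_n^k$ disappears once the inhomogeneity and coefficients have matching Hölder regularity. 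Assembling these pieces, and noting that the positivity hypotheses $u>0$, $\pa_n u > 0$ are used only to ensure $a = u/x_n$ is bounded away from zero so that $u^2$ is a nondegenerate weight, completes the proof.
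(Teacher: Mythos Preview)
Your reduction has a structural gap that blocks the direct application of Theorem~\ref{thm:DMO-HO-reg-par} (or its H\"older specialization, Theorem~\ref{thm:holder-HO-reg}). Writing $u = x_n a$ with $a>0$ and setting $\widetilde A := a^2 A$ does remove $\nabla w$ from the right-hand side, but the resulting equation is
\[
\div(x_n^2\,\widetilde A\,\nabla w) - x_n^2\,a^2\,\partial_t w = ug - vf
\]
(your sign $uf-vg$ is reversed), and the extra factor $a^2$ in front of $\partial_t w$ means this is \emph{not} an instance of \eqref{eq:pde-par}: the Schauder theorems in the paper are stated and proved only for the time term $x_n^\al\partial_t u$ with unit scalar coefficient. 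Moving $(1-a^2)x_n^2\partial_t w$ to the right would introduce $\partial_t w$, over which you have no a priori control, so your claim that ``the equation for $w$ becomes a genuine instance of \eqref{eq:pde-par}'' is not justified.

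The paper instead keeps the matrix $A$ and the clean weight $x_n^2$, paying the price of a first-order drift: one obtains
\[
\div(x_n^2 A\nabla w)-x_n^2\partial_t w = x_n\bigl(\tilde g_1 + \langle\mathbf b,\nabla w\rangle\bigr),\qquad \mathbf b = 2A\bigl(\vec e_n - (u/x_n)^{-1}\nabla u\bigr).
\]
Since the data now contains the unknown $\nabla w$, your proposed bootstrap from De Giorgi--Nash--Moser cannot start at $k=1$: applying the Schauder estimate already requires $\nabla w\in C^{\be/2,\be}$. The missing idea is that $|\mathbf b|\le Cx_n^\be$ near $Q_1'$ (because $\nabla u=(u/x_n)\vec e_n$ on $Q_1'$ and $\nabla u\in C^{\be/2,\be}$), and this smallness drives an absorption argument: the a priori estimate reads
\[
[\nabla w]_{C^{\be/2,\be}(\overline{Q_{r_0}^+})}\le \frac{C}{(R_0-r_0)^{1+\be}}\|w\|_{L^\infty}+C[\tilde g_1]_{C^{\be/2,\be}}+CR_0^\be[\nabla w]_{C^{\be/2,\be}(\overline{Q_{R_0}^+})},
\]
and one fixes $R_0$ small so that $CR_0^\be<\tfrac12$, then iterates over nested cylinders via \cite{Gia83}*{Lemma~3.1 of Ch.~V} to close the estimate. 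Only once $k=1$ is established this way does the inductive bootstrap to $k\ge2$ proceed, essentially along the lines you describe.
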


Due to some technical difficulties, we address the parabolic boundary Harnack principle only for the case with Hölder-type coefficients. We leave the case with (partially) DMO-type coefficients as an open question. Concerning Theorem~\ref{thm:par-BHP}, the result was achieved in \cite{AudFioVit24b} when $k\ge2$. The approach was to reduce the problem to the degenerate equation for $w=v/u$
$$
\div(x_n^2A\D w)-x_n^2\partial_tw=\div(x_n^2\bh)+x_n^2\mean{\bb,\D w}\quad\text{for some functions }\bh,\bb.
$$
If $k\ge2$, then $u/x_n,v/x_n\in C^{\frac{1+\be}2,1+\be}$, which implies $\D w=\D\left((v/x_n)(u/x_n)^{-1}\right)\in C^{\be/2,\be}$. This allowed the authors to consider $\mean{\bb,\D w}\in C^{\be/2,\be}$ as a forcing term and the regularity of $w$ follows from a direct application of the parabolic Schauder estimate.

In this paper, we achieve Theorem~\ref{thm:par-BHP} for any $k\ge1$ by reducing to a different degenerate equation:
\begin{align*}
    \div(x_n^2A\D w)-x_n^2\partial_tw=x_n(\tilde f+\mean{\tilde\bb,\D w})\quad\text{for some functions }\tilde f,\tilde\bb.
\end{align*}
Here, $|\tilde\bb|$ is small near $Q_1'$. When $k=1$, after some modifications, we apply the Schauder estimate to show that $C^{\be/2,\be}$-norm of $\D w$ in a smaller cylinder is bounded by a quantity involving a $C^{\be/2,\be}$-norm of $\D w$ in a larger cylinder, multiplied by a small constant. We then use an iteration argument to absorb the norm of $\D w$ on the right-hand side.

%%%%%%%%%%%%%%%%%%%%%%%%%%%%%%%%%%%%%%%%%%%%%%%%%%%%%%%%%%%%%%%%%%%%%%%%%%%%%%%%%%%%%%%%%%%%%%%%%%%%%

\subsection{Notation and structure of the paper}\label{subsec:notation}
Throughout the paper, we shall use $X=(t,x',x_n)=(t,x)=(X',x_n)$ to denote a point in $\R^{n+1}$, where $x'=(x_1,\ldots,x_{n-1})$, $x=(x',x_n)$ and $X'=(t,x')$. Similarly, we also write $Y=(s,y)=(s,y',y_n)=(Y',y_n)$ and $X_0=(t_0,x_0)=(t_0,x_0',(x_0)_n)=(X_0',(x_0)_n)$, etc.

For $X\in\R^{n+1}$, we denote
\begin{align*}
    &B'_r(x'):=\{y'\in\R^{n-1}\,:\, |y'-x'|<r\},\quad D_r(x):=B'_r(x')\times(x_n-r,x_n+r),\\
    &D_r^+(x):=B'_r(x')\times((x_n-r)^+,x_n+r),\quad Q'_r(X'):=(t-r^2,t)\times B'_r(x'),\\
    &Q_r(X):=(t-r^2,t)\times D_r(x),\quad Q_r^+(X):=(t-r^2,t)\times D^+_r(x),\\
&\mathbb{Q}_r(X):=(t-r^2,t+r^2)\times D_r(x).
\end{align*}
For abbreviation, we simply write $D_r=D_r(0)$, $D_r^+=D_r^+(0)$, $Q'_r=Q'_r(0)$, etc.

For $\al\in\R$, we write $\al^+:=\max\{\al,0\}$ and $\al^-:=\min\{\al,0\}$.

Let $\mathbb{N}$ be the set of natural numbers and $\mathbb{Z}_+=\mathbb{N}\cup\{0\}$ be that of nonnegative integers.

Given $f:Q_1^+\to\R^m$, $h\in(0,1)$, and $1\le\g\le n$, define
\begin{align*}
    &\de_{t,h}f(t,x):=f(t,x)-f(t-h,x),\quad\de_{x_\g,h}f(t,x):=f(t,x)-f(t,x-he_\g),\\
    &\de_{t,h}^2f(t,x):=\de_{t,h}(\delta_{t,h}f(t,x))=f(t,x)-2f(t-h,x)+f(t-2h,x),\\
    &\de_{x_\g,h}^2f(t,x):=f(t,x)-2f(t,x-he_\g)+f(t,x-2e_\g).
\end{align*}

Let $\cD\subset\R^n$ be a bounded open set in $\R^n$ and $\cQ:=(a,b)\times\cD$ for some $a,b\in\R$. We define the parabolic H\"older classes $C^{l/2,l}$, for $l=k+\be$, $k\in \mathbb{Z}_+$, $0<\be\le1$, as follows. We let
\begin{align*}
    &[u]_\cQ^{(k)}:=\sum_{|\mathbf\al|+2j=k}\|D_x^{\mathbf\al}\partial_t^ju\|_{L^\infty(\cQ)},\\
    &[u]_{x,\cQ}^{(\be)}:=\sup_{(t,x),(t,y)\in\cQ}\frac{|u(t,x)-u(t,y)|}{|x-y|^\be},\quad [u]_{t,\cQ}^{(\be)}:=\sup_{(t,x),(s,x)\in\cQ}\frac{|u(t,x)-u(s,x)|}{|t-s|^\be},\\
    &[u]_\cQ^{(l)}:=\sum_{|\mathbf\al|+2j=k}[D_x^{\mathbf\al}\partial_t^ju]_{x,\cQ}^{(\be)}+\sum_{k-1\le|\mathbf\al|+2j\le k}[D_x^{\mathbf\al}\partial_t^ju]_{t,\cQ}^{\left(\frac{l-|\mathbf{\al}|-2j}2\right)}.
\end{align*}
Then, we denote $C^{l/2,l}(\cQ)$ to be the space of functions $u$ for which the following norm is finite:
$$
\|u\|_{C^{l/2,l}(\cQ)}:=\sum_{i=0}^k[u]_\cQ^{(i)}+[u]_\cQ^{(l)}.
$$

By $C_x^\be$ and $C_t^\be$, we mean the space of Hölder continuous functions with degree $\be$ in space and time, respectively. Let $C_X$ and $C_{X'}$ be the spaces of continuous functions in $X$ and $X'$, respectively.

Recall the definition of $\eta_f^\theta$ in \eqref{eq:partial-DMO}. Given $k\in\mathbb{Z}_+$ and $\theta\in(-1,\infty)$, we say that $A\in \cH^k_\theta(\overline{\cQ})$ if
\begin{align}\label{eq:coeffi-space}
    \begin{cases}
        \text{- $A\in L^\infty(\cQ)$ when $k=0$, while $A\in C^{k/2,k}(\overline{\cQ})$ when $k\ge1$,}\\
        \text{- if $i+2j=k$ for some $i,j\in \mathbb{Z}_+$, then $D_x^i\partial_t^jA$ is of $L^1(d\mu_\theta)\text{-DMO}_{X'}$ in $\cQ$,}\\
        \text{- if $i+2j=k-1$ for some $i,j\in \mathbb{Z}_+$, then for any $0<h<1/4$}\\
        \quad\text{$\frac{\delta_{t,h}D_x^i\partial_t^jA}{h^{1/2}}$ is of $L^1(d\mu_\theta)\text{-DMO}_{X'}$,}\\
        \text{- There is a Dini function $\eta_{A,k}^\theta$ such that $\eta_f^\theta\le \eta_{A,k}^\theta$ for every function}\\
        \qquad \text{$f=D_x^i\partial_t^jA,\,\, \frac{\delta_{t,h}D_x^i\partial_t^jA}{h^{1/2}}$ as above.}
    \end{cases}
\end{align}
Next, for given $k\in\mathbb{Z}_+$, we say that $u\in\mathring{C}^{k/2,k}(\overline{\cQ})$ if the following holds:
\begin{align*}
    \begin{cases}
       \text{- $u\in L^\infty(\cQ)$ when $k=0$, while $u\in C^{k/2,k}(\overline{\cQ})$ when $k\ge1$,}\\
        \text{- if $i+2j=k$ for some $i,j\in \mathbb{Z}_+$, then $D_x^i\partial_t^ju\in C_{X'}(\cQ)$,}\\
        \text{- if $i+2j=k-1$ for some $i,j\in \mathbb{Z}_+$, then $\frac{\delta_{t,h}D_x^i\partial_t^ju}{h^{1/2}}\in C_X(\cQ)$ uniformly in}\\
        \qquad\text{$h\in (0,1/4)$, and $\frac{\delta_{t,h}D_x^i\partial_t^ju}{h^{1/2}}(t,x)\to0$ as $h\to0$ uniformly in $(t,x)\in\cQ$.}
    \end{cases}
\end{align*}
We emphasize that in the second condition of $u\in \mathring{C}^{k/2,k}(\overline{\cQ})$, the derivatives $D_x^i\partial_t^ju$ may not be continuous in $x_n$. When $k=0$, we simply write $\cH^0_\theta=\cH_\theta$ and $\mathring{C}^{0,0}=\mathring{C}$.

The relation $A\lesssim B$ is understood that $A\le CB$ for some constant $C>0$.

\medskip
The rest of this paper is organized as follows. In Section~\ref{sec:prel:lem}, we provide some preliminary lemmas. In Section~\ref{sec:schauder}, we prove the Schauder-type estimate, Theorem~\ref{thm:DMO-HO-reg-par}, when $k=1$. We extend this result to any $k\in\mathbb{N}$ in Section~\ref{sec:HO-schauder}. Section~\ref{sec:BHP} is devoted to proving the Boundary Harnack principle.

%%%%%%%%%%%%%%%%%%%%%%%%%%%%%%%%%%%%%%%%%%%%%%%%%%%%%%%%%%%%%%%%%%%%%%%%%%%%%%%%%%%%%%%%%%%%

\section{Preliminary lemmas}\label{sec:prel:lem}

The following are some properties of $L^1(d\mu)\text{-DMO}_{X'}$ functions.

\begin{lemma}\label{lem:product-par-DMO-elliptic}
     For $\al>-1$, if $f$ and $g$ are bounded and of $L^1(x_n^\al dX)$-$\DMO_{X'}$ in $Q_1^+$, then $fg$ is of $L^1(x_n^\al dX)$-$\DMO_{X'}$ in $Q_{1/2}^+$.
\end{lemma}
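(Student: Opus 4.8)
The plan is to reduce the oscillation of $fg$ to the oscillations of $f$ and $g$ separately via the standard product trick, being careful that the "partial" average is taken only over the $X'$-variables while the weight $x_n^\al$ lives in the full cylinder. Fix $X_0\in Q_{1/2}^+$ and $r\in(0,1/2)$, and abbreviate $\langle h\rangle_{r}(X_0',x_n):=\dashint_{Q_r'(X_0')}h(Y',x_n)\,dY'$ for the partial average in $X'$ at height $x_n$. For $X=(X',x_n)\in Q_r^+(X_0)$ write
\begin{align*}
(fg)(X)-\langle fg\rangle_r(X_0',x_n)
&=\bigl(f(X)-\langle f\rangle_r(X_0',x_n)\bigr)g(X)\\
&\quad+\langle f\rangle_r(X_0',x_n)\bigl(g(X)-\langle g\rangle_r(X_0',x_n)\bigr)\\
&\quad+\Bigl(\langle fg\rangle_r-\langle f\rangle_r\langle g\rangle_r\Bigr)(X_0',x_n).
\end{align*}
The first two terms are controlled immediately: $\|g\|_{L^\infty}$ and $\|f\|_{L^\infty}$ are finite, so after integrating $d\mu_\al$ over $Q_r^+(X_0)$ and dividing by $\mu_\al(Q_r^+(X_0))$ they contribute at most $\|g\|_{L^\infty}\,\eta_f^\al(r)+\|f\|_{L^\infty}\,\eta_g^\al(r)$.

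For the third term I would bound it pointwise in $x_n$ by a one-dimensional (in $X'$) computation at fixed height: writing $c_f(x_n):=\langle f\rangle_r(X_0',x_n)$,
$$
\langle fg\rangle_r-\langle f\rangle_r\langle g\rangle_r
=\dashint_{Q_r'(X_0')}\bigl(f(Y',x_n)-c_f(x_n)\bigr)\bigl(g(Y',x_n)-c_g(x_n)\bigr)\,dY',
$$
so $\bigl|\langle fg\rangle_r-\langle f\rangle_r\langle g\rangle_r\bigr|(x_n)\le \|g\|_{L^\infty}\dashint_{Q_r'(X_0')}|f(Y',x_n)-c_f(x_n)|\,dY'$. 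Now integrate this quantity against $d\mu_\al$ over $Q_r^+(X_0)$ and divide by $\mu_\al(Q_r^+(X_0))$: since the slice-average $\dashint_{Q_r'}|f(\cdot,x_n)-c_f(x_n)|\,dY'$ no longer depends on $X'$, Fubini together with the comparability of $\mu_\al(Q_r^+(X_0))$ and $|Q_r'(X_0')|\cdot\int_{I}x_n^\al\,dx_n$ over the relevant $x_n$-interval $I$ turns this into a constant multiple of $\dashint_{Q_r^+(X_0)}|f(X)-c_f(x_n)|\,d\mu_\al(X)\le \eta_f^\al(r)$. Hence the third term contributes $\lesssim \|g\|_{L^\infty}\,\eta_f^\al(r)$. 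Altogether $\eta_{fg}^\al(r)\lesssim \|g\|_{L^\infty}\eta_f^\al(r)+\|f\|_{L^\infty}\eta_g^\al(r)$, and the right-hand side is Dini because $\eta_f^\al$ and $\eta_g^\al$ are; restricting the center $X_0$ to $Q_{1/2}^+$ and $r<1/2$ keeps all cylinders inside $Q_1^+$ where the hypotheses apply.

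The only genuinely delicate point is the weighted-measure bookkeeping in the third term: one must make sure that integrating a function that is constant in $X'$ against $x_n^\al\,dX$ over the half-cylinder $Q_r^+(X_0)$ — whose $x_n$-range is either $(0,(x_0)_n+r)$ when $(x_0)_n<r$ or $((x_0)_n-r,(x_0)_n+r)$ otherwise — reproduces, up to a constant depending only on $\al$, the full weighted slice-average used in the definition of $\eta_f^\al$. This is exactly the same normalization already implicit in Definition~1.3, so it is harmless, but it is where care is needed; everything else is the routine product estimate. I expect this to be the shortest lemma in Section~\ref{sec:prel:lem}.
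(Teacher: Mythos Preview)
Your argument is correct and is exactly the standard product estimate one would expect; the paper in fact omits the proof entirely and refers to \cite{DonJeoVit23}*{Lemma~A.2}, whose argument is the elliptic/DMO version of what you wrote. Two minor remarks: the sign of the third term in your decomposition should be $-(\langle fg\rangle_r-\langle f\rangle_r\langle g\rangle_r)$ (irrelevant after taking absolute values), and the factorization $\mu_\al(Q_r^+(X_0))=|Q_r'(X_0')|\cdot\int_I x_n^\al\,dx_n$ is exact, so the weighted bookkeeping for the third term produces no extra constant at all.
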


\begin{lemma}
    \label{lem:partial-DMO-weight-rela}
    Let $\al\ge\be>-1$. If $f$ is of $L^1(x_n^\be dX)$-$\DMO_{X'}$ in $Q_1^+$, then it is of $L^1(x_n^\al dX)$-$\DMO_{X'}$ in $Q^+_{1/2}$. Moreover,
    $$
    \eta_f^\al(r)\le C(n,\al,\be)\eta_f^\be(r).
    $$
\end{lemma}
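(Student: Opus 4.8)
The goal is to compare the partial oscillation quantity $\eta_f^\al$ with $\eta_f^\be$ when $\al \ge \be > -1$. The plan is to work entirely at the level of a fixed cylinder $Q_r^+(X_0) \subset Q_{1/2}^+$ and show that, up to a dimensional constant, the $x_n^\al$-weighted average oscillation is controlled by the $x_n^\be$-weighted one. The key observation is that on a cylinder $Q_r^+(X_0)$ with $X_0 = (t_0, x_0', (x_0)_n)$, the variable $x_n$ ranges over $((x_0)_n - r)^+, (x_0)_n + r)$, so $x_n \le (x_0)_n + r \le$ (something comparable to $r$ when $(x_0)_n$ is small, and comparable to $(x_0)_n$ otherwise); in all cases $x_n$ is comparable to the ``vertical scale'' of the cylinder, which I will call $\rho := (x_0)_n + r$ (so that $x_n \le \rho$ on $Q_r^+(X_0)$, and also $\rho \le$ const$\cdot x_n$ does \emph{not} hold pointwise, but holds in an averaged sense). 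Writing $x_n^\al = x_n^{\al - \be} x_n^\be$ and using $x_n \le \rho$ together with $\al - \be \ge 0$ gives $x_n^\al \le \rho^{\al-\be} x_n^\be$ pointwise on $Q_r^+(X_0)$.

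First I would record the elementary two-sided estimate for the weighted measure of a half-cylinder: for $\theta > -1$,
\[
\mu_\theta(Q_r^+(X_0)) = \int_{Q_r^+(X_0)} x_n^\theta \, dX \asymp r^{n+1} \cdot \rho_*^\theta,
\]
where $\rho_* := \max\{(x_0)_n, r\}$ and the implied constants depend only on $n$ and $\theta$. (When $(x_0)_n \ge r$ the integrand is $\asymp (x_0)_n^\theta$ over a set of measure $\asymp r^{n+1}$; when $(x_0)_n < r$ one integrates $x_n^\theta$ from $0$ to $\asymp r$, giving $\asymp r^{\theta} \cdot r^{n+1}$.) Note $\rho_* \asymp \rho = (x_0)_n + r$. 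This handles both the degenerate ($\theta \ge 0$) and singular ($-1 < \theta < 0$) cases uniformly.

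Next, I would fix $X_0$ and $r$ and estimate the inner integral defining $\eta_f^\al$. Using $x_n^\al \le \rho^{\al - \be} x_n^\be$ on $Q_r^+(X_0)$,
\[
\int_{Q_r^+(X_0)} \left| f(X) - \dashint_{Q_r'(X_0')} f(Y', x_n)\, dY' \right| d\mu_\al(X)
\le \rho^{\al-\be} \int_{Q_r^+(X_0)} \left| f(X) - \dashint_{Q_r'(X_0')} f(Y', x_n)\, dY' \right| d\mu_\be(X).
\]
Dividing by $\mu_\al(Q_r^+(X_0)) \asymp r^{n+1} \rho^\al$ and using $\mu_\be(Q_r^+(X_0)) \asymp r^{n+1} \rho^\be$ on the right, the factor $\rho^{\al-\be}$ cancels against $\rho^\al / \rho^\be$, leaving
\[
\dashint_{Q_r^+(X_0)} \left| f(X) - \dashint_{Q_r'(X_0')} f\, dY' \right| d\mu_\al(X)
\le C(n,\al,\be) \dashint_{Q_r^+(X_0)} \left| f(X) - \dashint_{Q_r'(X_0')} f\, dY' \right| d\mu_\be(X)
\le C(n,\al,\be)\, \eta_f^\be(r).
\]
Taking the supremum over $X_0 \in Q_{1/2}^+$ yields $\eta_f^\al(r) \le C(n,\al,\be)\, \eta_f^\be(r)$ for $r \in (0, 1/2)$, and since the supremum in the definition of $\eta_f^\al$ over $Q_{1/2}^+$ involves cylinders contained in $Q_1^+$, everything is well-defined. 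Finally, the Dini condition $\int_0^1 \eta_f^\be(r)/r \, dr < \infty$ immediately transfers to $\eta_f^\al$ via this pointwise bound (adjusting the upper limit from $1/2$ to $1$ is harmless since $\eta_f^\al$ is essentially monotone up to a constant, or one can simply note $\int_0^{1/2}$ suffices for the DMO property in $Q_{1/2}^+$), so $f$ is of $L^1(x_n^\al dX)$-$\DMO_{X'}$ in $Q_{1/2}^+$.

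The main thing to be careful about — rather than a genuine obstacle — is the two-sided measure estimate $\mu_\theta(Q_r^+(X_0)) \asymp r^{n+1}(\,(x_0)_n + r)^\theta$ and its uniformity in the singular range $-1 < \theta < 0$; one must check that the constant does not blow up as $(x_0)_n \to 0$, which it does not because $\int_0^{cr} x_n^\theta dx_n = \frac{(cr)^{\theta+1}}{\theta+1} < \infty$ precisely when $\theta > -1$. The key structural input is simply that $x_n \le (x_0)_n + r$ throughout $Q_r^+(X_0)$ together with $\al - \be \ge 0$; the inner mean $\dashint_{Q_r'(X_0')} f(Y', x_n)\, dY'$ plays no role beyond being the same subtracted quantity in both integrals, so no extra argument is needed there.
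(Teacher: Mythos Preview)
Your proposal is correct. The paper omits this proof entirely, referring instead to the elliptic analogue in \cite{DonJeoVit23}*{Lemma~A.3}; your argument---the pointwise bound $x_n^{\al}\le \rho^{\al-\be}x_n^{\be}$ on $Q_r^+(X_0)$ combined with the two-sided measure estimate $\mu_\theta(Q_r^+(X_0))\asymp r^{n+1}((x_0)_n+r)^\theta$---is exactly the natural route and matches what one expects that reference to do.
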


The proofs of Lemma~\ref{lem:product-par-DMO-elliptic} - Lemma~\ref{lem:partial-DMO-weight-rela} follow the same line of arguments with straightforward modifications as those of their elliptic and DMO counterparts in \cite{DonJeoVit23}*{Lemma~A.2 - Lemma~A.3}. Thus, we omit the proofs.

Next, we establish the weak type-(1,1) estimate, Lemma~\ref{lem:weak-type-(1,1)}, by following lines in \cite{DonXu21}. It will be a crucial ingredient in the proof of $\mathring{C}^{1/2,1}$ regularity of solutions.

\begin{lemma}
    \label{lem:weak-type-(1,1)-aux}
    Let $X_0\in Q_1^+$, $r\in(0,1)$, and $d\mu=x_n^\al dX$ for some $\al>-1$. For a smooth convex domain $\cD_r(x_0)\subset \R^n_+$ with $D^+_{r}(x_0)\subset \cD_r(x_0)\subset D^+_{\frac43r}(x_0)$, we let $\mathcal{Q}_r(X_0):=(t_0-r^2,t_0)\times\cD_r(x_0)$. Let $T$ be a bounded linear operator from $L^2(\cQ_r(X_0);\R^{m\times n},d\mu)$ to $L^2(\cQ_r(X_0);\R^{m\times n},d\mu)$. Suppose there exists a constant $C_0>0$ such that for any $Y_0\in\cQ_r(X_0)$ and $0<\rho<r/2$,
    $$
    \int_{\cQ_r(X_0)\setminus \bbQ_{2\rho}(Y_0)}|T\bb|d\mu\le C_0\int_{Q_\rho(Y_0)\cap \cQ_r(X_0)}|\bb|d\mu
    $$
    whenever $\bb\in L^2(\cQ_r(X_0);\R^{m\times n},d\mu)$ is supported in $Q_\rho(Y_0)\cap \cQ_r(X_0)$ and satisfies $\int_{\cQ_r(X_0)}\bb d\mu=0$. Then for every $\bff\in L^2(\cQ_r(X_0);\R^{m\times n},d\mu)$ and $\tau>0$, we have
    $$
    \mu(\{X\in\cQ_r(X_0)\,:\, |T\bff(X)|>\tau\})\le\frac{C}{\tau}\int_{\cQ_r(X_0)}|\bff|d\mu,
    $$
    where $C>0$ is a constant depending only on $n,m,\al,\la$, and $C_0$.
\end{lemma}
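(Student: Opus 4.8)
The statement is the classical Calderón–Zygmund weak type-(1,1) estimate, adapted to the parabolic setting with the $A_2$-type weight $d\mu = x_n^\al\,dX$ on a convex domain $\cQ_r(X_0)$. The standard route is a Calderón–Zygmund decomposition of $\bff$ relative to the parabolic cubes $Q_\rho(Y_0)$, using the cancellation hypothesis on $T$ applied to the "bad" part and the $L^2$ boundedness of $T$ on the "good" part.

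\begin{proof}
By scaling and translation we may normalize and assume $\cQ := \cQ_r(X_0)$ with $\mu(\cQ) \sim 1$; write $d\mu = w\,dX$, $w = x_n^\al$. Since $\al > -1$, the weight $w$ is doubling on $\R^n_+$ with respect to the parabolic cubes $Q_\rho$, and $(\cQ, d\mu, \text{parabolic quasi-metric})$ is a space of homogeneous type; in particular the parabolic cubes $Q_\rho(Y_0)$ form a Vitali-type family and the dyadic (in the parabolic scaling $x \mapsto \lambda x$, $t\mapsto \lambda^2 t$) decomposition is available. We may assume $\tau > \fint_\cQ |\bff|\,d\mu$, else the conclusion is trivial from $\mu(\cQ)\lesssim 1$.

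\textbf{Calderón–Zygmund decomposition.} Apply the Calderón–Zygmund lemma to $|\bff|$ at height $\tau$ with respect to the parabolic dyadic cubes on $\cQ$ (using doubling of $\mu$): obtain a countable disjoint family of parabolic cubes $\{Q_k\}$, $Q_k = Q_{\rho_k}(Y_k)\cap\cQ$, such that $\tau < \fint_{Q_k}|\bff|\,d\mu \le C\tau$, $|\bff| \le \tau$ $\mu$-a.e. off $\Om := \bigcup_k Q_k$, and $\sum_k \mu(Q_k) \le \tfrac{C}{\tau}\int_\cQ |\bff|\,d\mu$. We may take the $\rho_k < r/2$ after subdividing, so the hypothesis on $T$ applies to each $Q_k$. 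Split $\bff = \bg + \bb$, $\bb = \sum_k \bb_k$, where on each $Q_k$ we set $\bb_k := \big(\bff - \fint_{Q_k}\bff\,d\mu\big)\rchi_{Q_k}$ and $\bg := \bff$ off $\Om$, $\bg := \fint_{Q_k}\bff\,d\mu$ on $Q_k$. Then $\int \bb_k\,d\mu = 0$, $\supp \bb_k \subset Q_k$, $\|\bg\|_{L^\infty(\cQ)} \lesssim \tau$, and $\|\bg\|_{L^1(\cQ,d\mu)}\lesssim \|\bff\|_{L^1(\cQ,d\mu)}$, hence $\|\bg\|_{L^2(\cQ,d\mu)}^2 \lesssim \tau\|\bff\|_{L^1(\cQ,d\mu)}$.

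\textbf{Good part.} By the assumed $L^2(\cQ,d\mu)$-boundedness of $T$ and Chebyshev,
$$
\mu\big(\{|T\bg| > \tau/2\}\big) \le \frac{4}{\tau^2}\|T\bg\|_{L^2(\cQ,d\mu)}^2 \lesssim \frac{1}{\tau^2}\|\bg\|_{L^2(\cQ,d\mu)}^2 \lesssim \frac{1}{\tau}\int_\cQ|\bff|\,d\mu.
$$

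\textbf{Bad part.} Let $\bbQ_{2\rho_k}(Y_k)^* := \bbQ_{2\rho_k}(Y_k)\cap\cQ$ and $E := \bigcup_k \bbQ_{2\rho_k}(Y_k)^*$. By doubling of $\mu$ and the fact that $\bbQ_{2\rho_k}(Y_k)$ is, up to the parabolic scaling, a bounded dilate of $Q_{\rho_k}(Y_k)$, we get $\mu(E) \le \sum_k \mu(\bbQ_{2\rho_k}(Y_k)^*) \lesssim \sum_k \mu(Q_k) \lesssim \tfrac1\tau\int_\cQ|\bff|\,d\mu$. Off $E$ we estimate, using the hypothesis on $T$ for each $\bb_k$,
$$
\int_{\cQ\setminus E}|T\bb|\,d\mu \le \sum_k \int_{\cQ\setminus \bbQ_{2\rho_k}(Y_k)}|T\bb_k|\,d\mu \le C_0\sum_k \int_{Q_k}|\bb_k|\,d\mu \lesssim \sum_k \int_{Q_k}|\bff|\,d\mu \lesssim \int_\cQ|\bff|\,d\mu,
$$
where we used $\int_{Q_k}|\bb_k|\,d\mu \le 2\int_{Q_k}|\bff|\,d\mu$ and disjointness of the $Q_k$. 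Hence, by Chebyshev,
$$
\mu\big(\{X\in\cQ\setminus E : |T\bb(X)| > \tau/2\}\big) \le \frac{2}{\tau}\int_{\cQ\setminus E}|T\bb|\,d\mu \lesssim \frac{1}{\tau}\int_\cQ|\bff|\,d\mu,
$$
and therefore $\mu(\{|T\bb| > \tau/2\}) \le \mu(E) + \mu(\{X\in\cQ\setminus E:|T\bb|>\tau/2\}) \lesssim \tfrac1\tau\int_\cQ|\bff|\,d\mu$.

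Combining the good and bad parts via $\{|T\bff|>\tau\}\subset\{|T\bg|>\tau/2\}\cup\{|T\bb|>\tau/2\}$ yields the claim, with $C$ depending only on $n,m,\al,\la,C_0$ (the weight constants being functions of $n,\al$).
\end{proof}

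\medskip
\noi\textbf{Remarks on the plan.} The only nonroutine point is making sure the Calderón–Zygmund machinery is legitimate here: one must check that $(\cQ,\,d\mu_\al,\,\text{parabolic metric})$ is a space of homogeneous type — i.e.\ that $x_n^\al$ is doubling against parabolic cubes $Q_\rho$ for $\al>-1$ — and that the convexity of $\cD_r(x_0)$ guarantees that $Q_\rho(Y_0)\cap\cQ$ and $\bbQ_{2\rho}(Y_0)\cap\cQ$ still satisfy the doubling inequality $\mu(\bbQ_{2\rho}\cap\cQ)\lesssim\mu(Q_\rho\cap\cQ)$ (the convexity ensures $\mu(Q_\rho(Y_0)\cap\cQ)\gtrsim\mu(Q_\rho(Y_0))$ uniformly, which is what prevents degeneracy near $\partial\cQ$). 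Everything else — the dyadic stopping-time decomposition, the $L^\infty$ and $L^1$ bounds on $\bg$, and the summation over the bad cubes — is the textbook argument transcribed to this weighted parabolic setting. The hypothesis that $T$ is a priori bounded on $L^2(\cQ,d\mu)$ is what lets us avoid any interpolation or further structural input. I would expect the bulk of the actual writing to be in verifying the doubling/geometry facts; I would state these as a short lemma or cite a standard reference on spaces of homogeneous type (e.g.\ Coifman–Weiss) together with the $A_p$-weight observation $x_n^\al\in A_\infty$ for $\al>-1$.
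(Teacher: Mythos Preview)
Your proposal is correct and follows essentially the same approach as the paper: the paper's own proof is a brief sketch that invokes the Calder\'on--Zygmund decomposition (citing Stein), adapted to the bounded weighted parabolic setting via Christ's dyadic cube construction for spaces of homogeneous type, and refers to the elliptic analogue in \cite{DonJeoVit23}*{Lemma~2.3}. Your write-up simply fills in the details of this standard argument, and your remark about convexity of $\cD_r(x_0)$ guaranteeing the doubling inequality for truncated cubes is exactly the geometric point underlying the paper's setup.
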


\begin{proof}
    When $\al=0$ (i.e., $d\mu$ is the Lebesgue measure), we refer to \cite{Ste93}, where the author employs the Calderón–Zygmund decomposition and the domain considered is the whole space. In our context, we can modify the proof by incorporating the “dyadic parabolic cube" decomposition; see \cite{Chr90}*{Theorem~1.1}. This argument is applicable to our case involving a weighted measure, as outlined in \cite{DonJeoVit23}*{Lemma~2.3}.
\end{proof}

\begin{lemma}\label{lem:weak-type-(1,1)}
    Let $X_0$, $r$, and $d\mu$ be as in Lemma~\ref{lem:weak-type-(1,1)-aux}. Let $\cD_r(x_0)$ and $\tilde\cD_r(x_0)$ be smooth and convex domains in $\R^n_+$ with $D^+_{r}(x_0)\subset\cD_r(x_0)\subset D^+_{\frac43r}(x_0)$ and $D^+_{\frac32r}(x_0)\subset\tilde\cD_r(x_0)\subset D^+_{2r}(x_0)$, and set $\cQ_r(X_0):=(t_0-r^2,t_0)\times\cD_r(x_0)$ and $\tilde\cQ_r(X_0):=(t_0-4r^2,t_0)\times\tilde\cD_r(x_0)$. For one-variable functions $\bar A^{\g\de}=[\bar a^{\g\de}_{ij}(x_n)]_{i,j=1}^m$ satisfying \eqref{eq:assump-coeffi} in $\tilde\cQ_r(X_0)$ and $\bff=(f^1,\ldots,f^n)\in H^{1,\al}(\tilde\cQ_r(X_0);\R^{m\times n})$, let $u\in H^{1,\al}(\tilde\cQ_r(X_0);\R^m)$ be a weak solution of 
    \begin{align}
        \label{eq:pde-x_n}
        \begin{cases}
            D_\g(x_n^\al\bar A^{\g\de}D_\de u)-x_n^\al\partial_tu=\div(x_n^\al\bff\chi_{\cQ_r(X_0)})&\text{in }\tilde\cQ_r(X_0),\\
            u=0&\text{on }\partial_p\tilde\cQ_r(X_0)\setminus\{x_n=0\},\\
            \lim_{x_n\to0+}x_n^\al(\bar A^{n\de}D_\de u-f^n\chi_{\cQ_r(X_0)})=0&\text{on }\partial_p\tilde\cQ_r(X_0)\cap\{x_n=0\}.
        \end{cases}
    \end{align}
    Then there exists a constant $C=C(n,m,\la,\al)>0$ such that for any $\tau>0$
    \begin{align}
    &\mu(\{X\in\cQ_r(X_0)\,:\,|\D u(X)|>\tau\})\le \frac{C}\tau\int_{\cQ_r(X_0)}|\bff|d\mu,\label{eq:weak-type-1}\\
    &\mu(\{X\in\cQ_r(X_0)\,:\,|u(X)|>\tau\})\le \frac{Cr}\tau\int_{\cQ_r(X_0)}|\bff|d\mu.\label{eq:weak-type-2}
    \end{align}
\end{lemma}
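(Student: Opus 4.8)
The plan is to deduce both weak type-$(1,1)$ estimates from Lemma~\ref{lem:weak-type-(1,1)-aux} by verifying its hypothesis for the solution operators $\bff\mapsto \D u$ and $\bff\mapsto u$. First I would set up the two operators: on the domain $\cQ_r(X_0)$, define $T_1\bff:=\D u|_{\cQ_r(X_0)}$ and $T_2\bff:=u|_{\cQ_r(X_0)}$, where $u$ is the solution of \eqref{eq:pde-x_n}. Both are bounded linear operators on the appropriate $L^2(d\mu)$ spaces: linearity is immediate from the linearity of \eqref{eq:pde-x_n}, and $L^2$-boundedness follows from the standard energy estimate for weak solutions of the degenerate parabolic system with zero lateral/parabolic boundary data (testing with $u$ itself, Cauchy--Schwarz, and the ellipticity \eqref{eq:assump-coeffi}), together with a weighted Poincar\'e inequality on $\tilde\cQ_r(X_0)$ to control $\|u\|_{L^2(d\mu)}$ by $\|\D u\|_{L^2(d\mu)}$, which also explains the extra factor $r$ in \eqref{eq:weak-type-2}.

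The main step is to verify the cancellation/off-diagonal bound: given $Y_0\in\cQ_r(X_0)$, $0<\rho<r/2$, and $\bb\in L^2(d\mu)$ supported in $Q_\rho(Y_0)\cap\cQ_r(X_0)$ with $\int_{\cQ_r(X_0)}\bb\,d\mu=0$, let $u$ solve \eqref{eq:pde-x_n} with right-hand side $\bff=\bb$; we must show $\int_{\cQ_r(X_0)\setminus\bbQ_{2\rho}(Y_0)}|\D u|\,d\mu\le C_0\int_{Q_\rho(Y_0)\cap\cQ_r(X_0)}|\bb|\,d\mu$ and the analogous bound for $|u|$. The point is that away from $\bbQ_{2\rho}(Y_0)$, $u$ solves the homogeneous equation $D_\g(x_n^\al\bar A^{\g\de}D_\de u)-x_n^\al\pa_tu=0$ with conormal condition on $\{x_n=0\}$; I would cover the region $\cQ_r(X_0)\setminus\bbQ_{2\rho}(Y_0)$ by a Whitney-type collection of parabolic cylinders of size comparable to their distance $d$ to $Y_0$, apply interior and boundary Lipschitz (or even $L^\infty$) estimates for solutions of equations with coefficients depending only on $x_n$ on each such cylinder, and sum. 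Because the coefficients $\bar A^{\g\de}=\bar A^{\g\de}(x_n)$ depend on $x_n$ alone, these local estimates are available (they are the degenerate-parabolic analogue of the estimates used in \cite{DonXu21}; see also the De Giorgi--Nash--Moser / Schauder theory that underlies \cite{DonJeoVit23}*{Lemma~2.3}). The zero-mean hypothesis on $\bb$ is used exactly as in the classical Calder\'on--Zygmund argument: it lets one subtract a constant from a suitable solution kernel, gaining an extra factor $\rho/d$ in the Whitney estimate on a cylinder at distance $d$, which makes the sum over the Whitney decomposition converge. Summing the gains $\sum (\rho/d_j)^{\gamma}$ over dyadic annuli yields the constant $C_0$ depending only on $n,m,\la,\al$.

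Once the hypotheses of Lemma~\ref{lem:weak-type-(1,1)-aux} are checked for $T_1$ and (after rescaling out the factor $r$) $T_2$, the conclusion \eqref{eq:weak-type-1}--\eqref{eq:weak-type-2} is immediate: $\mu(\{|T_i\bff|>\tau\})\le\frac{C}{\tau}\|\bff\|_{L^1(d\mu)}$ with $C$ depending only on $n,m,\la,\al,C_0$, hence only on $n,m,\la,\al$, and for $T_2$ the scaling $X\mapsto X_0+r\,\!\cdot(X-X_0)$ (which is compatible with the parabolic cylinders and the weight $x_n^\al$) converts the zeroth-order operator into one homogeneous of the right degree and produces the stated factor $r$.

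I expect the main obstacle to be the off-diagonal estimate for $|u|$ (as opposed to $|\D u|$): controlling the solution itself, rather than its gradient, on far-away cylinders requires care because $u$ does not decay simply by interior gradient bounds — one must exploit the zero lateral data on $\partial_p\tilde\cQ_r(X_0)$ together with the conormal condition and, again, the zero-mean property of $\bb$, propagating the cancellation through the Duhamel/representation formula or through a careful energy argument on the complement of $\bbQ_{2\rho}(Y_0)$. This is precisely where the convexity and smoothness of the auxiliary domains $\cD_r(x_0),\tilde\cD_r(x_0)$ enter, ensuring the boundary estimates hold up to $\partial\cD_r(x_0)$, and where one borrows the corresponding step from \cite{DonXu21} and adapts it to the weighted, degenerate setting as in \cite{DonJeoVit23}.
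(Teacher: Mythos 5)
Your broad strategy matches the paper's: both estimates are reduced to Lemma~\ref{lem:weak-type-(1,1)-aux}, and the key work is verifying the off-diagonal hypothesis on the solution operators using the zero-mean cancellation. Where you diverge is in the details, and one of them is a genuine gap rather than merely a stylistic difference.

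The paper does not apply Lemma~\ref{lem:weak-type-(1,1)-aux} directly to $\bff\mapsto\D u$. It first introduces the invertible change of variables \eqref{eq:invert-map} on the data ($\hat\bff\mapsto\bff$), chosen precisely so that $\langle\D v,\bff\rangle=\langle(\D_{x'}v,V),\hat\bff\rangle$ where $V=(\bar A^{n\de})^{\tr}D_\de v$, and then applies the auxiliary lemma to $T:\hat\bff\mapsto\D u$. This is not cosmetic: the cancellation hypothesis in Lemma~\ref{lem:weak-type-(1,1)-aux} is $\int\bb\,d\mu=0$, and after pairing against the adjoint solution $v$ of \eqref{eq:weak-sol-v} the quantity one needs to subtract a constant from must be continuous up to $\{x_n=0\}$ with the right oscillation decay. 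The pair $(\D_{x'}v,V)$ has this property because of the conormal condition (indeed $V$ vanishes linearly at the boundary, cf.\ the computation leading to \eqref{eq:V-est}); $D_nv$ itself does not. Your proposal — directly decomposing $\cQ_r(X_0)\setminus\bbQ_{2\rho}(Y_0)$ by Whitney cylinders and invoking a "solution kernel" to implement the cancellation — glosses over exactly this point, and also presupposes a kernel representation that is not readily available for the degenerate/singular parabolic system. The paper instead runs the cancellation through the backward adjoint equation \eqref{eq:weak-sol-v} and the boundary regularity from \cite{DonPha23}*{Proposition~4.4}, which avoids any explicit kernel. You should also note that for \eqref{eq:weak-type-2} the paper pairs $u$ against a \emph{different} adjoint problem (right-hand side $x_n^\al h$, not $\div(x_n^\al\bh)$), since the dual of the zeroth-order trace operator $S:\hat\bff\mapsto u$ has that form; your discussion of the $|u|$ estimate stays vague ("Duhamel or a careful energy argument") precisely where the paper makes a concrete choice.
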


\begin{proof}
For simplicity, we set $t_0=0$. We first prove \eqref{eq:weak-type-1}. Given $\hat\bff=(\hat f^1,\ldots,\hat f^n)\in H^{1,\al}(\tilde\cQ_r(X_0);\R^{m\times n})$, we consider $\bff=(f^1,\ldots, f^n)$ defined by
\begin{align}\label{eq:invert-map}
    \begin{cases}
        f_j^\g:=\hat f^\g_j+\bar a^{n\g}_{ji}\hat f^n_i&\text{when }1\le\g\le n-1,\,1\le j\le m,\\
        f_j^n:=\bar a^{nn}_{ji}\hat f^n_i&\text{when }1\le j\le m.
    \end{cases}
\end{align}
Here, $\bff$ is defined so that for any $v$,
\begin{align}
    \label{eq:f-hat-f}
    \mean{\D v,\bff}=\mean{(\D_{x'}v,V),\hat\bff}, \quad\text{where }V=(\bar A^{n\de})^{\tr}D_\de v.
\end{align}
Now we let $T(\hat\bff)=\D u$, where $u$ is a solution of \eqref{eq:pde-x_n} with $\bff$ as in \eqref{eq:invert-map}. This map $T$ is well-defined and is a bounded linear operator on $L^2(\cQ_r(X_0);\R^{m\times n},d\mu)$ by \cite{DonPha23}*{Lemma~3.4}.

To show that the operator $T$ satisfies the hypothesis of Lemma~\ref{lem:weak-type-(1,1)-aux}, let $Y_0\in \cQ_r(X_0)$ and $0<\rho<r/2$, and suppose that $\hat\bff\in L^2(\cQ_r(X_0);\R^{m\times n},d\mu)$ is a function supported in $Q_\rho(Y_0)\cap \cQ_r(X_0)$ and $\int_{Q_\rho(Y_0)\cap \cQ_r(X_0)}\hat\bff d\mu=0$. As above, let $\bff$ be as in \eqref{eq:invert-map}. For any $R\ge 2\rho$ with $\cQ_r(X_0)\setminus \bbQ_R(Y_0)\neq\emptyset$ and a function $\bh=(h^1,\ldots,h^n)\in C_c^{\infty}((\bbQ_{2R}(Y_0)\setminus \bbQ_R(Y_0))\cap\cQ_r(X_0);\R^{m\times n})$, let $v\in H^{1,\al}((-4r^2,0)\times\tilde \cD_r(x_0);\R^m)$ be a solution of 
\begin{align}
    \label{eq:weak-sol-v}
    \begin{cases}
        D_\g(x_n^\al(\bar A^{\de\g} )^{\text{tr}}D_\de v)+x_n^\al\partial_tv=\div(x_n^\al\bh)\qquad\text{in }(-4r^2,0)\times\tilde \cD_r(x_0),\\
        v=0\qquad\text{on }(\{0\}\times\tilde \cD_r(x_0))\cup((-4r^2,0)\times\partial\tilde \cD_r(x_0))\setminus \{x_n=0\})
        ,\\
        \lim_{x_n\to0+}x_n^\al((\bar A^{\de n})^{\tr}D_\de v-h^n)=0\qquad\text{on }((-4r^2,0)\times\partial\tilde \cD_r(x_0))\cap \{x_n=0\}.
    \end{cases}
\end{align}
By using \eqref{eq:pde-x_n}, \eqref{eq:f-hat-f}, \eqref{eq:weak-sol-v} and \cite{DonPha23}*{Proposition~4.4}, we can follow the argument in the proof of \cite{DonXu21}*{Lemma~4.3} to show that $T$ satisfies the hypothesis of Lemma~\ref{lem:weak-type-(1,1)-aux}, which readily implies \eqref{eq:weak-type-1}. We omit the details.

The second weak type-(1,1) estimate \eqref{eq:weak-type-2} can be derived in a similar way as in the first one \eqref{eq:weak-type-1}; see \cite{DonXu21}*{Lemma~4.3 and Lemma~5.2}. The main difference is that instead of $T:\hat\bff\longmapsto\D u$ and a solution $v$ of \eqref{eq:weak-sol-v}, we consider an operator $S:\hat\bff\longmapsto u$ and a solution $w$ of the following type of equation
\begin{align*}
    \begin{cases}
        D_\g(x_n^\al(\bar A^{\de\g})^{\tr}D_\de w)+x_n^\al\partial_tw=x_n^\al h\qquad\text{in }(-4r^2,0)\times\tilde \cD_r(x_0),\\
        w=0\qquad\text{on }(\{0\}\times\tilde \cD_r(x_0))\cup((-4r^2,0)\times\partial\tilde \cD_r(x_0))\setminus \{x_n=0\})
        ,\\
        \lim_{x_n\to0+}x_n^\al(\bar A^{\de n})^{\tr} D_\de w=0\qquad\text{on }((-4r^2,0)\times\partial\tilde \cD_r(x_0))\cap \{x_n=0\},
    \end{cases}
\end{align*}
where $h\in C_c^\infty((\bbQ_{2R}(Y_0)\setminus \bbQ_{R}(Y_0))\cap \cQ_r(X_0);\R^m)$.
\end{proof}

%%%%%%%%%%%%%%%%%%%%%%%%%%%%%%%%%%%%%%%%%%%%%%%%%%%%%%%%%%%%%%%%%%%%%%%%%%%%%%%%%%%%%%%%%%%%%%%%%%%%%

\section{Schauder type Estimates}\label{sec:schauder}
In this section, we prove Theorem~\ref{thm:DMO-HO-reg-par} when $k=1$.

\subsection{The continuity of \texorpdfstring{$\D_{x'}u$}{} and \texorpdfstring{$U$}{}}\label{subsec:reg-space}
The objective of this section is to derive the following result by using the ideas in \cites{Don12, DonJeoVit23}.

\begin{theorem}\label{thm:DMO-reg}
    For $\al>-1$, let $u$ be a solution of \eqref{eq:pde-par} in $Q_4^+$. Suppose the coefficients $A$ satisfy \eqref{eq:assump-coeffi} and $A,\bg\in \cH_\al(Q_4^+)$. Then $u$ is Lipschitz in $\overline{Q_1^+}$  with respect to $x$-variable. Moreover, $\D_{x'}u$ and $A^{n\de}D_\de u-g^n$ are continuous in $\overline{Q_1^+}$.
\end{theorem}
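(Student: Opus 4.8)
The plan is to run a Campanato-type (Morrey–Campanato) iteration at boundary points of $Q_1'$ and at interior points, following the scheme of \cite{Don12} and its degenerate analogue in \cite{DonJeoVit23}. The key idea is that the natural quantities controlled by the equation are not $\D u$ itself but the \emph{conormal-adapted} pair $(\D_{x'}u, U)$, where $U := A^{n\delta}D_\delta u - g_n$ is the conormal flux. For a constant-coefficient, $\bg \equiv 0$ model problem $D_\g(x_n^\al \bar A^{\g\de}D_\de u) - x_n^\al \partial_t u = 0$ with the homogeneous conormal condition $\lim_{x_n\to 0}x_n^\al \bar A^{n\de}D_\de u = 0$, one expects $\D_{x'}u$ and $U$ to be smooth (in particular Hölder) up to $\{x_n=0\}$, with decay estimates of the form $\dashint_{Q_r^+}|\D_{x'}u - (\D_{x'}u)_{Q_r^+}|^2\,d\mu_\al \lesssim (r/R)^{2\be}\dashint_{Q_R^+}|\D_{x'}u|^2 d\mu_\al$, and similarly for $U$; this is the content that one must first isolate as a lemma (either cited from \cite{DonPha23} regularity theory or proved via energy estimates, difference quotients in the $x'$ and $t$ directions, and the one-dimensional ODE structure in $x_n$).

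Next I would set up the perturbation/freezing argument. Fix $X_0 \in \overline{Q_1'}$ (the boundary case is the essential one; interior points are easier and standard). On each parabolic cylinder $Q_r^+(X_0)$, freeze the coefficients to $\bar A(x_n) := \dashint_{Q_r'(X_0')} A(\cdot)$ (the $X'$-average, which is exactly the object controlled by the $L^1(d\mu_\al)\text{-DMO}_{X'}$ hypothesis) and solve the corresponding homogeneous constant-in-$X'$ problem $v$ with the same boundary data as $u$ on the parabolic boundary and the homogeneous conormal condition. The difference $w = u - v$ solves a divergence-form system whose right-hand side is $\div(x_n^\al[(A-\bar A)D_\de u + \bg])$ plus the conormal defect $x_n^\al[(A^{n\de}-\bar A^{n\de})D_\de u + g_n - (g_n)\,]$; here is where the weak type-$(1,1)$ estimate of Lemma~\ref{lem:weak-type-(1,1)} enters, to bound $\|\D w\|$ and $\|w\|$ in terms of the $L^1(d\mu_\al)$-norms of $(A-\bar A)D_\de u$ and the oscillation of $\bg$, i.e.\ in terms of $\eta_A^\al(r)$, $\eta_{\bg}^\al(r)$ and $\|\D u\|_{L^\infty}$-type quantities on $Q_r^+(X_0)$. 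Combining the decay estimate for $v$ with the perturbation bound for $w$ gives the basic iteration inequality $\phi(r) \lesssim (r/R)^{2\be}\phi(R) + (\text{terms involving }\eta_A^\al, \eta_{\bg}^\al)$, where $\phi(r)$ is the sum of the mean oscillations of $\D_{x'}u$ and of $U$ over $Q_r^+(X_0)$, plus possibly a boundedness functional for $\D u$.

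Then I would close the argument in two stages. First, a Caccioppoli/reverse-type bound plus the Dini summability of $\eta_A^\al$ yields a uniform $L^\infty$ bound on $\D u$ on $\overline{Q_1^+}$ (the Lipschitz-in-$x$ claim): one iterates the inequality for the boundedness functional of $\D u$, absorbing the small DMO contributions. Second, with $\D u$ now bounded, the iteration inequality for $\phi$ can be summed using the standard Campanato-Dini lemma (a discrete Gronwall-type summation exploiting $\int_0^1 \eta_A^\al(\rho)/\rho\,d\rho < \infty$ and $\int_0^1 \eta_{\bg}^\al(\rho)/\rho\,d\rho < \infty$) to conclude that $\phi(r) \to 0$ as $r \to 0$, uniformly in $X_0 \in \overline{Q_1'}$, and likewise at interior points; this gives a uniform modulus of continuity and hence continuity of $\D_{x'}u$ and of $U = A^{n\de}D_\de u - g_n$ up to $\overline{Q_1^+}$. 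I expect the main obstacle to be the treatment of the $t$-direction: unlike the elliptic case in \cite{DonJeoVit23}, one needs to control the behavior of $u$ in time in order to make the frozen model problem genuinely parabolic-homogeneous and to get the decay of $v$; this is precisely why difference quotients $\delta_{t,h}u/h^{1/2}$ and their Dini decay (foreshadowed as Theorem~\ref{thm:sol-time-est} in the introduction) are needed, and one must be careful that the weak type-$(1,1)$ machinery of Lemma~\ref{lem:weak-type-(1,1)} is applied on the slightly enlarged convex domains $\cQ_r, \tilde\cQ_r$ so that the geometric setup matches. A secondary technical point is verifying that products and weight-changes of DMO functions behave well (Lemmas~\ref{lem:product-par-DMO-elliptic} and \ref{lem:partial-DMO-weight-rela}) so that after freezing, the error coefficients remain in the right DMO class.
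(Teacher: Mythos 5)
Your proposal captures the main architecture of the paper's proof correctly: the key objects are the conormal-adapted pair $(\D_{x'}u,\,U)$ with $U=A^{n\de}D_\de u-g_n$; one freezes $A$ and $\bg$ to their $Q'_{2r}(X_0')$-averages $\bar A(x_n),\bar\bg(x_n)$ (exactly the averaging matched to the $L^1(d\mu_\al)$-$\DMO_{X'}$ hypothesis), splits $u$ into a model part and a perturbation, bounds the perturbation via the weak type-$(1,1)$ estimate of Lemma~\ref{lem:weak-type-(1,1)} on the convex domains $\cQ_r,\tilde\cQ_r$, derives a Campanato-type decay inequality for an oscillation functional, uses this first to prove Lipschitz continuity in $x$ (the uniform $\|\D u\|_\infty$ bound of Lemma~\ref{lem:gradient-unif-est}) and then to produce a uniform modulus of continuity. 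Two organizational details you gloss over but the paper makes precise: the oscillation functional $\psi$ is an $L^p$ quantity with $0<p<1$ (so that the weak type-$(1,1)$ output can be integrated directly), and one subtracts a particular one-dimensional solution $u_0(x_n)=\int_0^{x_n}(\bar A^{nn})^{-1}\bar g_n$ before freezing, so that the frozen model problem for $v:=u-u_0-w$ is genuinely homogeneous and the flux $V=\bar A^{n\de}D_\de v$ vanishes on the boundary.

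However, one claim in your last paragraph is wrong and would mislead you if you tried to carry out the argument. You say that to get the decay of the model solution $v$ one must control the time regularity of $u$, and that this is why the Dini decay of $\delta_{t,h}u/h^{1/2}$ (Theorem~\ref{thm:sol-time-est}) is needed here. That is not the case, and the paper proves Theorem~\ref{thm:DMO-reg} \emph{before and independently of} Theorem~\ref{thm:sol-time-est}. The decay of $v$ in both space and time comes entirely from the frozen-coefficient model regularity: because $\bar A=\bar A(x_n)$ and $\bar\bg=\bar\bg(x_n)$ are independent of $(t,x')$, the functions $D_iv$ ($1\le i\le n-1$) and $\partial_t v$ each solve the same homogeneous degenerate equation \eqref{eq:hom-repla-pde}, and $L^\infty$ bounds for $DD_{x'}v$, $\partial_t v$, the flux $V$, and the $C^{1/2,1}$ seminorm of $\D_{x'}v$ follow directly from the interior/boundary estimates of \cite{DonPha23}*{Lemma~4.2, Lemma~4.3, Proposition~4.4}. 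Theorem~\ref{thm:sol-time-est} concerns the actual solution $u$ with variable coefficients and is a substantially harder statement; it is invoked only when passing from $k=1$ to $k=2$ in the proof of Theorem~\ref{thm:DMO-HO-reg-par}, where one needs to know that $\hat\bg^h=\bg^h-A^h D_\de u(\cdot-h,\cdot)$ remains in the right class to apply the $k=1$ result to the time difference quotient $u^h$. For the purposes of Theorem~\ref{thm:DMO-reg} itself, no a priori time regularity of $u$ beyond what the energy space gives is required.
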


We remark that we imposed the weaker condition $A,\bg\in \cH_\al$ instead of $\cH_{\al^-}$.

We will establish an a priori estimate for the modulus of continuity of $$(\D_{x'}u,A^{n\de}D_\de u-g^n)$$ under the assumption $u$ is Lipschitz in $x$ in $\overline{Q_3^+}$. The general case can be obtained by a standard approximation argument.

In Section~\ref{subsec:reg-space}, we fix $\al>-1$ and write for simplicity
\begin{align}\label{eq:not}
d\mu=x_n^\al dX,\quad \eta_A=\eta_A^\al,\quad \eta_\bg=\eta_\bg^{\al}.
\end{align}

Given $X_0=(X_0',(x_0)_n)\in \overline{Q_3^+}$, we put $d_{X_0}:=\dist(X_0,Q'_3)=(x_0)_n\ge0$. We set
\begin{align*}
    U:=A^{n\de}D_\de u-g^n,\quad\text{and}\quad U^{X_0}:=(x_n/d_{X_0})^\al(A^{n\de}D_\de u-g^n)\,\text{ when }d_{X_0}>0.
\end{align*}
We fix $0<p<1$ and consider
\begin{align*}
    \psi(X_0,r):=\begin{cases}
        \inf_{\bq\in\R^{m\times n}}\left(\dashint_{Q_r(X_0)}|(\D_{x'}u,U^{X_0})-\bq|^pd\mu\right)^{1/p},& 0<r\le d_{X_0}/4,\\
        \inf_{\bq'\in\R^{m\times(n-1)}}\left(\dashint_{Q_r^+(X_0)}|(\D_{x'}u-\bq',U)|^pd\mu\right)^{1/p},& d_{X_0}/4< r<1/2.
    \end{cases}
\end{align*}

Next, we consider a few of Dini functions stemmed from $\eta_\bullet$, where $\bullet$ is either $A$ or $\bg$. For some constants $C>c>0$, depending only on $n$ and $\al$, we have
\begin{align}
    \label{eq:alm-mon}
    c\eta_\bullet(r)\le \eta_\bullet(s)\le C\eta_\bullet(r)
\end{align}
whenever $r/2\le s\le r<1$. See e.g. \cite{Li17}. Given constant $\be\in(0,1)$, we write $\be'=\frac{\be+1}2$ so that $\be<\be'<1$. For a small constant $0<\ka<1$, we set
\begin{align}\label{eq:tilde-eta}
\tilde\eta_\bullet(r):=\sum_{i=0}^\infty\ka^{\be' i}\eta_\bullet(\ka^{-i}r)[\ka^{-i}r\le 1],\quad 0<r<1,
\end{align}
where we used Iverson's bracket notation (i.e., $[P]=1$ when $P$ is true, while $[P]=0$ otherwise). We also let
\begin{align}
    \label{eq:hat-eta}
    \hat\eta_\bullet(r):=\sup_{\rho\in [r,1)}(r/\rho)^{\beta'}\tilde\eta_\bullet(\rho),\quad 0<r<1.
\end{align}
Then $\tilde\eta_\bullet$ and $\hat\eta_\bullet$ are Dini functions satisfying \eqref{eq:alm-mon} and $\hat\eta_\bullet\ge\tilde\eta_\bullet\ge \eta_\bullet$. Moreover, $r\longmapsto \frac{\hat\eta_\bullet(r)}{r^{\beta'}}$ is nonincreasing. See e.g., \cite{Don12}.

Now we begin the proof of Theorem~\ref{thm:DMO-reg},  primarily following the approach in \cite{DonJeoVit23}, which establishes a similar result in the elliptic setting with DMO coefficients. We address additional difficulties arising in the partial DMO setting by using the idea in \cite{Don12}.

\begin{lemma}\label{lem:psi-bdry-est}
    Let $0<p<1$, $0<\be<1$, $\be'=\frac{1+\be}2$ and $\bar X_0\in Q_3'$. Then for any $0<\rho<r<1/2$, we have
    \begin{align}\label{eq:psi-bdry-est}
        \psi(\bar X_0,\rho)\le C(\rho/r)^{\beta'}\psi(\bar X_0,r)+C\|\D u\|_{L^\infty(Q_{2r}^+(\bar X_0))}\tilde\eta_A(2\rho)+C\tilde\eta_\bg(2\rho),
    \end{align}
    where $C=C(n,m,\la,\al,p,\be)>0$ and $\tilde\eta_\bullet$ is as in \eqref{eq:tilde-eta}.
\end{lemma}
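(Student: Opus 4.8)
The plan is to prove \eqref{eq:psi-bdry-est} by a Campanato-type iteration: freeze the coefficients at the boundary point $\bar X_0$, compare $u$ with the solution of a constant-coefficient problem on each parabolic half-cylinder, and sum the resulting one-step decay estimates geometrically. Since $\bar X_0\in Q_3'$ lies on the flat boundary, only the second branch of $\psi$ (with $d_{\bar X_0}=0$) is relevant, so throughout $\psi(\bar X_0,r)=\inf_{\bq'}\big(\dashint_{Q_r^+(\bar X_0)}|(\D_{x'}u-\bq',U)|^p\,d\mu\big)^{1/p}$.

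First I would set up the one-step estimate. Fix $r<1/2$ and let $\bar A^{\g\de}(x_n):=\dashint_{Q_r'(\bar X_0)}A^{\g\de}(Y',x_n)\,dY'$ be the partial average in $X'$ (so $\bar A$ is a function of $x_n$ only and still satisfies \eqref{eq:assump-coeffi}). Let $w$ solve the frozen-coefficient conormal problem $D_\g(x_n^\al\bar A^{\g\de}D_\de w)-x_n^\al\pa_t w=\div(x_n^\al\bar\bg)$ on a suitable smooth convex thickening $\tilde\cQ_{r/2}(\bar X_0)$ of $Q_{r/2}^+(\bar X_0)$, with $w=u$ on the lateral-initial part of the boundary, where $\bar\bg$ is the corresponding partial average of $\bg$. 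The difference $v:=u-w$ then satisfies an equation of the form in Lemma~\ref{lem:weak-type-(1,1)} with right-hand side $\bff=(A^{\g\de}-\bar A^{\g\de})D_\de u+(\bg-\bar\bg)$; applying the weak type-$(1,1)$ estimate \eqref{eq:weak-type-1} together with the standard $L^p$-for-$p<1$ trick (integrating the distribution function) gives, for $0<p<1$,
$$
\Big(\dashint_{Q_{r/2}^+(\bar X_0)}|\D v|^p\,d\mu\Big)^{1/p}\lesssim \dashint_{Q_r^+(\bar X_0)}|\bff|\,d\mu\lesssim \|\D u\|_{L^\infty(Q_r^+(\bar X_0))}\,\eta_A(r)+\eta_\bg(r),
$$
where the last inequality uses that $A,\bg$ are $L^1(d\mu)\text{-DMO}_{X'}$ so that the partial oscillation over $Q_r^+(\bar X_0)$ is controlled by $\eta_A(r)$ and $\eta_\bg(r)$ respectively. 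Meanwhile, for the frozen problem $w$, one-variable coefficients $\bar A(x_n)$ give interior-type regularity for $(\D_{x'}w, \bar A^{n\de}D_\de w-\bar g_n)$ up to $\{x_n=0\}$ — the key point being that $\D_{x'}w$ solves the same type of equation (differentiating the equation in the tangential directions $x'$ is allowed since $\bar A$ does not depend on $x'$) and $\bar A^{n\de}D_\de w-\bar g_n$ has the analogous good behavior because it is what the conormal condition controls. This yields the Campanato decay: for $\rho\le \th r$ with $\th\in(0,1/2)$ fixed small,
$$
\inf_{\bq'}\Big(\dashint_{Q_\rho^+(\bar X_0)}|(\D_{x'}w-\bq', \bar U_w)|^p\,d\mu\Big)^{1/p}\le C_0(\rho/r)\,\inf_{\bq'}\Big(\dashint_{Q_r^+(\bar X_0)}|(\D_{x'}w-\bq', \bar U_w)|^p\,d\mu\Big)^{1/p},
$$
where $\bar U_w=\bar A^{n\de}D_\de w-\bar g_n$. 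Combining the two via the triangle inequality (and noting that $|U-\bar U_w|\le |U-U_w|+|U_w-\bar U_w|$ is controlled by $|\D v|$ plus the coefficient oscillation) produces a single-scale inequality $\psi(\bar X_0,\th r)\le C_0\th\,\psi(\bar X_0,r)+C\big(\|\D u\|_{L^\infty(Q_r^+)}\eta_A(r)+\eta_\bg(r)\big)$; choosing $\th=\ka$ small enough that $C_0\ka\le \ka^{\be'}$ converts this to $\psi(\bar X_0,\ka r)\le \ka^{\be'}\psi(\bar X_0,r)+C\big(\|\D u\|_{L^\infty}\eta_A(r)+\eta_\bg(r)\big)$.

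Finally I would iterate. Given $0<\rho<r<1/2$, pick the integer $N$ with $\ka^{N+1}r<\rho\le\ka^N r$, apply the one-step estimate $N$ times starting from radius $r$, and use the almost-monotonicity \eqref{eq:alm-mon} of $\psi$ in the radius (to pass from radius $\ka^N r$ to $\rho$) to get
$$
\psi(\bar X_0,\rho)\le \ka^{\be' N}\psi(\bar X_0,r)+C\sum_{i=0}^{N-1}\ka^{\be' i}\big(\|\D u\|_{L^\infty(Q_{\ka^{-i}\rho}^+(\bar X_0))}\eta_A(\ka^{-i}\cdot)+\eta_\bg(\ka^{-i}\cdot)\big)\big|_{\text{radius}\sim\ka^{-i}\rho}.
$$
Bounding $\ka^{\be'N}\le C(\rho/r)^{\be'}$, replacing the various radii $\ka^{-i}\rho$ by $2\rho$ up to constants, pulling $\|\D u\|_{L^\infty(Q_{2r}^+(\bar X_0))}$ out of the sum (all the cylinders $Q_{\ka^{-i}\rho}^+$ with $\ka^{-i}\rho\le r$ are contained in $Q_{2r}^+$), and recognizing $\sum_i\ka^{\be' i}\eta_\bullet(\ka^{-i}\rho)[\ka^{-i}\rho\le1]$ as exactly $\tilde\eta_\bullet(2\rho)$ from \eqref{eq:tilde-eta} (up to the \eqref{eq:alm-mon} comparison between $\eta_\bullet(\ka^{-i}\rho)$ and $\eta_\bullet(\ka^{-i}\cdot 2\rho)$) gives precisely \eqref{eq:psi-bdry-est}.

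I expect the main obstacle to be the frozen-coefficient regularity statement for $w$ up to the boundary $\{x_n=0\}$ — specifically, establishing the Campanato decay for the pair $(\D_{x'}w,\bar U_w)$ rather than for $\D w$ itself, since $D_n w$ need not be continuous in $x_n$ (as the paper emphasizes, and as the counterexample $u_0$ shows). The right framework is to observe that $\D_{x'}w$ again solves a conormal problem with $x_n$-dependent coefficients (tangential differentiation commutes with the equation), while $\bar U_w$ is the natural flux quantity that the conormal boundary condition keeps under control; combining energy estimates with a Morrey/Campanato iteration for these two quantities — essentially the constant-coefficient analogue already needed in \cite{DonJeoVit23} — delivers the required $(\rho/r)$-decay. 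Care is also needed in choosing the smooth convex thickenings $\cQ,\tilde\cQ$ compatibly with the hypotheses of Lemma~\ref{lem:weak-type-(1,1)}, and in handling the case $\al\ge1$ versus $-1<\al<1$ when justifying the use of test functions, but these are routine given the preliminary lemmas.
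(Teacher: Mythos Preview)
Your proposal is correct and follows essentially the same Campanato-iteration approach as the paper: freeze coefficients via partial $X'$-averages, split off the oscillation error and control it by the weak type-$(1,1)$ estimate, obtain a one-step $(\rho/r)$-decay for the pair $(\D_{x'},\,\text{conormal flux})$ of the homogeneous/frozen piece, and then sum geometrically to produce $\tilde\eta_\bullet$. You also correctly identify the crux, namely that the decay must be stated for $(\D_{x'}w,\bar U_w)$ rather than for $\D w$, and that this follows by differentiating tangentially and integrating the conormal flux from $x_n=0$.

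The only differences from the paper are cosmetic. First, your $v$ and $w$ are swapped relative to the paper's notation: the paper calls $w$ the piece with zero lateral data carrying the oscillation right-hand side (to which Lemma~\ref{lem:weak-type-(1,1)} is applied) and $v:=u-u_0-w$ the piece satisfying the homogeneous frozen system~\eqref{eq:hom-repla-pde}; you do the opposite. Second, the paper explicitly subtracts the one-variable function $u_0(x_n)=\int_0^{x_n}(\bar A^{nn})^{-1}\bar g_n$ so that the homogeneous piece has genuinely zero right-hand side; in your setup this subtraction is hidden inside your $w$ (since $\bar U_w=\bar A^{n\de}D_\de w-\bar g_n$ equals the flux of $w-u_0$). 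Two small corrections: the reference \eqref{eq:alm-mon} concerns $\eta_\bullet$, not $\psi$ (though $\psi$ does enjoy the analogous doubling comparison for trivial measure-ratio reasons); and the one-step inequality actually carries a factor $\ka^{-(n+2+\al)/p}$ on the error term from rescaling the average, which is harmless once $\ka$ is fixed but should be recorded.
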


\begin{proof}
We assume without loss of generality $\bar X_0=0$. For a fixed $0<r<1/2$, we set
\begin{align*}
    &\bar A^{\g\de}(x_n):=\dashint_{Q_{2r}'}A^{\g\de}(Y',x_n)dY',\\
    &\bar\bg(x_n)=(\bar g^1(x_n),\dots,\bar g^n(x_n)):=\dashint_{Q'_{2r}}\bg(Y',x_n)dY'.
\end{align*}
Note that $u$ satisfies
$$
D_\g(x_n^\al\bar A^{\g\de}D_\de u)-x_n^\al\partial_tu=D_\g(x_n^\al((\bar A^{\g\de}-A^{\g\de})D_\de u+g^\g)).
$$
We consider a function 
\begin{align}\label{eq:u_0-def}
u_0(x_n):=\int_0^{x_n}(\bar A^{nn}(y_n))^{-1}\bar g^n(y_n)dy_n\quad \text{in }Q_{2r}^+,
\end{align}
which solves
\begin{align}\label{eq:u_0-pde}
\begin{cases}
   D_\g(x_n^\al\bar A^{\g\de}D_\de u_0)-x_n^\al\partial_tu_0=\div(x_n^\al\bar\bg)\\
    \lim_{x_n\to0+}x_n^\al(\bar A^{n\de}D_\de u_0-\bar g^n)=0
\end{cases} \text{in } Q^+_{2r}.
\end{align}
Then, $u_e(X',x_n):=u(X',x_n)-u_0(x_n)$ satisfies
$$
\begin{cases}
D_\g(x_n^\al\bar A^{\g\de}D_\de u_e)-x_n^\al\partial_tu_e=D_\g(x_n^\al((\bar A^{\g\de}-A^{\g\de})D_\de u+g^\g-\bar g^\g))\\
\lim_{x_n\to0+}x_n^\al(\bar A^{n\de}D_\de u_e+(A^{n\de}-\bar A^{n\de})D_\de u+\bar g^n-g^n)=0
\end{cases}\text{in }Q_{2r}^+.
$$
We take smooth and convex domains $\cD_r$ and $\tilde \cD_r$ such that $D_r^+\subset \cD_r\subset D_{\frac43r}^+$ and $D^+_{\frac32r}\subset \tilde \cD_r\subset D^+_{2r}$, and write $\cQ_r:=\cD_r\times(-r^2,0)$ and $\tilde\cQ_r:=\tilde D_r\times(-4r^2,0)$. Let $w$ be a solution of
\begin{align*}
\left\{
    \begin{aligned}
        D_\g(x_n^\al\bar A^{\g\de}D_\de w)-x_n^\al\partial_tw=D_\g(x_n^\al((\bar A^{\g\de}-A^{\g\de})D_\de u+g^\g-\bar g^\g)\chi_{\cQ_r}) \quad\text{in }\tilde \cQ_r,\\
        w=0\quad\text{on }\partial_p\tilde\cQ_r\setminus\{x_n=0\},\\
        \lim_{x_n\to0+}x_n^\al(\bar A^{n\de}D_\de w-((\bar A^{n\de}-A^{n\de})D_\de u+g^n-\bar g^n)\chi_{\cQ_r})=0\quad\text{on }\partial_p\tilde \cQ_r\cap\{x_n=0\}.
    \end{aligned}
\right.
\end{align*}
By applying the weak type-(1,1) estimate \eqref{eq:weak-type-1} and following the same argument as in obtaining \cite{DonJeoVit23}*{(2.8)}, we get
\begin{align}\label{eq:w-est}
    \left(\dashint_{Q_r^+}|\D w|^pd\mu\right)^{1/p}\le C\|\D u\|_{L^\infty(Q^+_{2r})}\eta_A(2r)+C\eta_\bg(2r).
\end{align}

Next, we let $v:=u_e-w$. It is easily seen that $v$ satisfies
\begin{align}\label{eq:hom-repla-pde}
    \begin{cases}
        D_\g(x_n^\al\bar A^{\g\de}D_\de v)-x_n^\al\partial_tv=0\\
        \lim_{x_n\to0+}x_n^\al\bar A^{n\de}D_\de v=0
    \end{cases}\text{in }Q_r^+.
\end{align}
Since $D_iv$ satisfies \eqref{eq:hom-repla-pde} for $1\le i\le n-1$, we have by \cite{DonPha23}*{Lemma~4.2 and Proposition~4.4} (see also \cite{BeDo25} for systems)
$$
\|DD_{x'}v\|_{L^\infty(Q_{r/2}^+)}\le \frac{C}r\left(\dashint_{Q_r^+}|\D_{x'}v|^2d\mu\right)^{1/2}\le \frac{C}r\left(\dashint_{Q_r^+}|\D v|^2d\mu\right)^{1/2}.
$$
Similarly, since $\partial_tu$ satisfies \eqref{eq:hom-repla-pde}, from \cite{DonPha23}*{Lemmas~4.2 and 4.3} (see also \cite{BeDo25}) we have
$$
\|\partial_tv\|_{L^\infty(Q_{r/2}^+)}\le\frac{C}r\left(\dashint_{Q_r^+}|\D v|^2d\mu\right)^{1/2}.
$$
By using a standard iteration as in \cite{Gia93}*{pages 80-82}, we improve the previous two estimates:
\begin{align}
    \label{eq:hom-repla-deriv-est}
    \|DD_{x'}v\|_{L^\infty(Q_{r/2}^+)}+\|\partial_tv\|_{L^\infty(Q_{r/2}^+)}\le\frac{C}r\left(\dashint_{Q_r^+}|\D v|^pd\mu\right)^{1/p}.
\end{align}
We put $V:=\bar A^{n\de}D_\de v$ in $Q_r^+$. From the first equation in \eqref{eq:hom-repla-pde}, we infer
$$
D_n(x_n^\al V)=x_n^\al\partial_tv-x_n^\al\sum_{\g=1}^{n-1}\sum_{\de=1}^n\bar A^{\g\de}D_{\g\de}v.
$$
By combining this with the conormal boundary condition in \eqref{eq:hom-repla-pde} and the estimate \eqref{eq:hom-repla-deriv-est}, we deduce that for every $X\in Q^+_{r/2}$
\begin{align*}
    |x_n^\al V(X)|&=\left|\int_0^{x_n}\partial_s(s^\al V(X',s))ds\right|\\
    &\le \int_0^{x_n}s^\al\left(|\partial_tv(X',s)|+\sum_{\g=1}^{n-1}\sum_{\de=1}^n|\bar A^{\g\de}D_{\g\de}v(X',s)|\right)ds\\
    &\le \frac{C}r\left(\dashint_{Q_r^+}|\D v|^pd\mu\right)^{1/p}\int_0^{x_n}s^\al ds\le \frac{Cx_n^{\al+1}}{r}\left(\dashint_{Q_r^+}|\D v|^pd\mu\right)^{1/p}.
\end{align*}
This gives
\begin{align}\label{eq:V-est}
    |V(X)|\le \frac{Cx_n}{r}\left(\dashint_{Q_r^+}|\D v|^pd\mu\right)^{1/p},\quad X\in Q^+_{r/2}.
\end{align}
Moreover, by using \cite{DonPha23}*{Proposition~4.4} (see also \cite{BeDo25}) and the iteration as above, we have
$$
[\D_{x'}v]_{C^{1/2,1}(Q^+_{r/2})}\le\frac{C}r\left(\dashint_{Q_r^+}|\D v|^pd\mu\right)^{1/p}.
$$
The preceding two estimates give that for small $\ka\in(0,1/2)$ to be chosen later 
\begin{align}
    \label{eq:hom-repla}\begin{split}
    &\left(\dashint_{Q_{\ka r}^+}\left(|\D_{x'}v-\mean{\D_{x'}v}_{Q_{\ka r}^+}|^p+|V|^p\right)d\mu\right)^{1/p}\\
    &\le C\ka \left(\dashint_{Q_r^+}|\D v|^pd\mu\right)^{1/p}\le C\ka\left(\dashint_{Q_r^+}\left(|\D_{x'}v|^p+|V|^p\right)d\mu\right)^{1/p}.
\end{split}\end{align}
For any constant vector $\bq'=(q_1,\ldots q_{n-1})\in \R^{m\times(n-1)}$, we set
$$
\tilde v(X):=v(X)-\bq'\cdot x'+\int_0^{x_n}(\bar A^{nn}(y_n))^{-1}\left(\sum_{\de=1}^{n-1}\bar A^{n\de}(y_n)q_\de\right)dy_n$$
and
$$\tilde V:=\bar A^{n\de}D_\de\tilde v.
$$
It is easily seen that $\tilde v$ satisfies \eqref{eq:hom-repla-pde} and $\tilde V=V$ in $Q_r^+$. This enables us to replace $v$ and $V$ with $\tilde v$ and $\tilde V$ in \eqref{eq:hom-repla}, respectively, to have
\begin{align*}
&\left(\dashint_{Q_{\ka r}^+}\left(|\D_{x'}v-\mean{\D_{x'}v}_{Q_{\ka r}^+}|^p+|V|^p\right)d\mu\right)^{1/p}\\
&    \le C\ka\left(\dashint_{Q_r^+}\left(|\D_{x'}v-\bq'|^p+|V|^p\right)d\mu\right)^{1/p}.
\end{align*}
By recalling $u_e=v+w$, $\bar A^{n\de}D_\de u_e=V+\bar A^{n\de}D_\de w$, $u=u_e+u_0$ and $\bar A^{n\de}D_\de u_0-\bar g^n=0$ and using \eqref{eq:w-est}, we follow \cite{DonJeoVit23}*{page 13} to further have
\begin{align*}
    &\left(\dashint_{Q_{\ka r}^+}\left(|\D_{x'}u-\mean{\D_{x'}v}_{Q_{\ka r}^+}|^p+|U|^p\right)d\mu\right)^{1/p}\\
    &\le C\ka \left(\dashint_{Q_{r}^+}\left(|\D_{x'}u-\bq'|^p+|U|^p\right)d\mu\right)^{1/p}\\
    &\quad +C\ka^{-\frac{n+2+\al}p}\left(\|\D u\|_{L^\infty(Q^+_{2r})}\eta_A(2r)+\eta_\bg(2r)\right).
\end{align*}
Since $\bq'\in \R^{n-1}$ is arbitrary, we get
$$
\psi(0,\ka r)\le C\ka\psi(0,r)+C\ka^{-\frac{n+2+\al}p}\left(\|\D u\|_{L^\infty(Q^+_{2r})}\eta_A(2r)+\eta_\bg(2r)\right).
$$
By taking $\ka$ small so that $C\ka\le \ka^{\be'}$ and using a standard iteration (see \cite{DonEscKim18}*{page 461}), we obtain \eqref{eq:psi-bdry-est}.   
\end{proof}

Given $X_0\in Q_3^+$, when there is no confusion, we simply write 
$$
d=d_{X_0}.
$$

\begin{lemma}\label{lem:psi-int-est}
    Let $p$, $\be$, and $\be'$ be as in Lemma~\ref{lem:psi-bdry-est}. For any $X_0\in Q_3^+$ and $0<\rho<r<d/4$, we have
    \begin{align}\label{eq:psi-int-est}
    \psi(X_0,\rho)\le C(\rho/r)^{\be'}\psi(X_0,r)+C\|\D u\|_{L^\infty(Q_{r}(X_0))}\tilde\eta_A(\rho)+C\tilde\eta_\bg(\rho),
    \end{align}
    where $C=C(n,m,\la,\al,p,\be)>0$ are constants and $\tilde\eta_\bullet$ is as in \eqref{eq:tilde-eta}.
\end{lemma}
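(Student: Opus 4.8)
The plan is to reproduce, in the interior regime $0<\rho<r<d/4$, the Campanato-type iteration from the proof of Lemma~\ref{lem:psi-bdry-est}, the crucial simplification being that here the weight $x_n^\al$ is non-degenerate. Indeed, when $r<d/4$ one has $x_n\in(3d/4,5d/4)$ on all of $Q_{d/4}(X_0)$, so writing $x_n^\al=d^\al(x_n/d)^\al$ and dividing \eqref{eq:pde-par} by the constant $d^\al$ shows that on $Q_{d/4}(X_0)$ the function $u$ solves a system of the same divergence form whose weight $(x_n/d)^\al$ is bounded above and below by constants depending only on $\al$; the system is therefore effectively uniformly parabolic on $Q_{d/4}(X_0)$ with ellipticity constants depending only on $n,m,\la,\al$, and all constants in the iteration below — which takes place on the nested cylinders $Q_{\ka^jr}(X_0)\subset Q_{d/4}(X_0)$ — are independent of $X_0$ and of $d$. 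Moreover $d\mu=x_n^\al dX$ is comparable to $d^\al\,dX$ on these cylinders, so the scale-invariant averages $\dashint\cdots\,d\mu$ may freely be replaced by $\dashint\cdots\,dX$, and $A$ stays of $\DMO_{X'}$ type with modulus $\eta_A$.

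The decomposition then follows Lemma~\ref{lem:psi-bdry-est} essentially verbatim. I would freeze the coefficients in $X'$ only, $\bar A^{\g\de}(x_n):=\dashint_{Q_r'(X_0')}A^{\g\de}(Y',x_n)\,dY'$ and likewise $\bar\bg$; subtract the one-variable solution $u_0$ of \eqref{eq:u_0-def}--\eqref{eq:u_0-pde} and set $u_e:=u-u_0$, so that $u_e$ solves the frozen equation with right-hand side $D_\g\big(x_n^\al((\bar A^{\g\de}-A^{\g\de})D_\de u+g_\g-\bar g_\g)\big)$, whose data is genuinely small; and split $u_e=v+w$ on slightly nested non-degenerate cylinders, with $w$ solving the frozen equation carrying that right-hand side cut off by $\chi_{Q_r(X_0)}$ together with zero initial/lateral data, and $v:=u_e-w$ solving the homogeneous frozen equation $D_\g(x_n^\al\bar A^{\g\de}D_\de v)-x_n^\al\partial_tv=0$. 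For $w$, the interior weak type-$(1,1)$ estimate for uniformly parabolic systems (the non-degenerate, interior analogue of \eqref{eq:weak-type-1}, available here because $x_n^\al$ is comparable to a constant; cf. \cite{DonXu21}), combined with the computation leading to \cite{DonJeoVit23}*{(2.8)}, gives
\begin{align*}
    \Big(\dashint_{Q_r(X_0)}|\D w|^p\,d\mu\Big)^{1/p}\le C\|\D u\|_{L^\infty(Q_r(X_0))}\,\eta_A(r)+C\,\eta_\bg(r).
\end{align*}
For $v$, since $D_iv$ ($1\le i\le n-1$) and $\partial_tv$ solve the same homogeneous equation and $\bar A$ depends only on $x_n$, \cite{DonPha23}*{Lemmas~4.2, 4.3, and Proposition~4.4}, followed by the usual iteration lowering the exponent to $p$ (\cite{Gia93}*{pages 80-82}), bound $\|DD_{x'}v\|_{L^\infty}$, $\|\partial_tv\|_{L^\infty}$ and $[\D_{x'}v]_{C^{1/2,1}}$ on $Q_{r/2}(X_0)$ by $\big(\dashint_{Q_r(X_0)}|\D v|^p\,d\mu\big)^{1/p}$; setting $V:=\bar A^{n\de}D_\de v$ and integrating the identity $D_n(x_n^\al V)=x_n^\al\partial_tv-x_n^\al\sum_{\g<n}\bar A^{\g\de}D_{\g\de}v$ in $x_n$ controls $V$ and its $X'$-oscillation as well. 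Subtracting an affine function in $x'$, compensated by a one-variable term so that $V$ is unchanged, exactly as in Lemma~\ref{lem:psi-bdry-est}, produces for a small $\ka\in(0,1/2)$ the decay
\begin{align*}
    \Big(\dashint_{Q_{\ka r}(X_0)}\big(|\D_{x'}v-\mean{\D_{x'}v}_{Q_{\ka r}(X_0)}|^p+|V|^p\big)\,d\mu\Big)^{1/p}\le C\ka\Big(\dashint_{Q_r(X_0)}\big(|\D_{x'}v-\bq'|^p+|V|^p\big)\,d\mu\Big)^{1/p}.
\end{align*}

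To finish, I would unwind $u=v+w+u_0$; since the conormal combination attached to $u_e$ is exactly $U=A^{n\de}D_\de u-g_n$, the difference $U-V$ is the conormal combination of $w$, and the bookkeeping of \cite{DonJeoVit23}*{page 13} together with the $w$-estimate above converts the decay for $v$ into the one-step estimate
\begin{align*}
    \psi(X_0,\ka r)\le C\ka\,\psi(X_0,r)+C\ka^{-\frac{n+2+\al}p}\big(\|\D u\|_{L^\infty(Q_r(X_0))}\eta_A(r)+\eta_\bg(r)\big),
\end{align*}
using that $U^{X_0}=(x_n/d)^\al U$, that $(x_n/d)^\al$ is a smooth factor comparable to $1$ on $Q_{d/4}(X_0)$, and that $U-V$ is controlled by the $w$-estimate. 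Choosing $\ka$ so small that $C\ka\le\ka^{\be'}$ and iterating as in \cite{DonEscKim18}*{page 461} --- the geometric sum over dyadic scales producing $\tilde\eta_\bullet$ as in \eqref{eq:tilde-eta} --- then yields \eqref{eq:psi-int-est}.

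The main point requiring care will not be an essential obstacle, since everything is uniformly parabolic and Lemma~\ref{lem:psi-bdry-est} is already in hand; rather it is the subtlety, inherited from that lemma, that $\bar A(x_n)$ is merely measurable in $x_n$, so one cannot invoke classical interior Schauder for $v$ and must instead use that only the ``good'' quantities $\D_{x'}v$, $\partial_tv$ and the flux $V=\bar A^{n\de}D_\de v$ are regular --- $D_nv$ genuinely need not be continuous, which is precisely why $\psi$ in this regime is built from $(\D_{x'}u,U^{X_0})$ rather than from the full gradient. The only other point is to keep every constant independent of $d=d_{X_0}$ as $X_0$ approaches $Q_3'$, and that is guaranteed by the uniform ellipticity of the normalized weight $(x_n/d)^\al$ on $Q_{d/4}(X_0)$ observed at the outset.
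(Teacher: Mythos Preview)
Your overall strategy---exploiting that on $Q_{d/4}(X_0)$ the weight is non-degenerate so the system is uniformly parabolic, then running the same freeze-split-iterate scheme as in Lemma~\ref{lem:psi-bdry-est}---is sound and is essentially what the paper does. The paper, however, executes it differently: it rewrites the equation as
\[
D_\g\big((A^{\g\de})^{X_0}D_\de u\big)-a_0(x_n)\partial_tu=\div\bg^{X_0},\qquad (A^{\g\de})^{X_0}:=(x_n/d)^\al A^{\g\de},\ \ \bg^{X_0}:=(x_n/d)^\al\bg,\ \ a_0:=(x_n/d)^\al,
\]
so that the conormal quantity for the \emph{rescaled} system is exactly $U^{X_0}=(A^{n\de})^{X_0}D_\de u-g_n^{X_0}$, and then invokes the interior Campanato estimate \cite{DonXu21}*{Lemma~4.4} (valid with the bounded weight $a_0$) directly.

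There is a genuine gap in your write-up, and it is not merely cosmetic. The displayed decay estimate you claim for $v$,
\[
\Big(\dashint_{Q_{\ka r}(X_0)}\big(|\D_{x'}v-\mean{\D_{x'}v}_{Q_{\ka r}(X_0)}|^p+|V|^p\big)\,d\mu\Big)^{1/p}\le C\ka\Big(\dashint_{Q_r(X_0)}\big(|\D_{x'}v-\bq'|^p+|V|^p\big)\,d\mu\Big)^{1/p},
\]
is copied from the boundary proof, where it is justified by \eqref{eq:V-est}: the conormal condition forces $V=0$ on $\{x_n=0\}$, so $|V|\lesssim x_n/r$ on $Q_{r/2}^+$ and hence $|V|$ is genuinely small on $Q_{\ka r}^+$. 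In the interior there is no such vanishing; $V(X_0)$ is generically nonzero, so $\dashint_{Q_{\ka r}(X_0)}|V|^p\,d\mu\to|V(X_0)|^p\neq0$ as $\ka\to0$, and the inequality as written is simply false. Correspondingly, ``integrating $D_n(x_n^\al V)=\cdots$ in $x_n$'' cannot control $|V|$---only its increment from a fixed $x_n$-slice---and your subtraction trick (affine in $x'$, compensated so that $V$ is unchanged) cannot repair this, since it never touches the $n$-th component. Note that this is precisely why $\psi$ in the regime $r<d/4$ allows subtraction of a \emph{full} $\bq\in\R^{m\times n}$, not just $\bq'\in\R^{m\times(n-1)}$.

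The remedy is to track $(x_n/d)^\al V$ rather than $V$. Subtracting from $v$ the one-variable solution $\ell_n\int_d^{x_n}(\bar A^{nn}(s))^{-1}(d/s)^\al\,ds$ of the frozen homogeneous equation shifts $(x_n/d)^\al V$ by the constant $\ell_n$ (while shifting $V$ itself by the non-constant $(d/x_n)^\al\ell_n$), so the oscillation of $(x_n/d)^\al V$---and hence of $U^{X_0}$---is the quantity with the right invariance. This is exactly the effect of the paper's rescaling: working with $(A^{\g\de})^{X_0}$ turns $U^{X_0}$ into the natural conormal, after which the interior argument of \cite{DonXu21}*{Lemma~4.4} (or your own iteration, now with $|V-q_n|$ in place of $|V|$) goes through with constants depending only on $n,m,\la,\al,p,\be$.
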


\begin{proof}
As $r<d/4$, $u$ solves a uniformly parabolic equation
\begin{align}
    \label{eq:unif-elliptic}
    D_\g((A^{\g\de})^{X_0}D_\de u)-a_0(x_n)\partial_tu=\div\bg^{X_0}\quad\text{in }Q_r(X_0),
\end{align}
where $(A^{\g\de})^{X_0}:=(x_n/d)^\al A^{\g\de}$, $\bg^{X_0}:=(x_n/d)^\al\bg$, and $a_0(x_n):=(x_n/d)^\al$. Notice that $c\le a_0\le c^{-1}$ in $Q_r(X_0)$ for some constant $c=c(\al)\in (0,1)$. The elliptic version of Lemma~\ref{lem:psi-int-est} is found in \cite{DonJeoVit23}*{Lemma~2.6}, which was proved using \cite{DonXu19}*{Lemma~3.3}. In our parabolic framework, we can follow the argument in \cite{DonJeoVit23}*{Lemma~2.6} by using a parabolic analogue, \cite{DonXu21}*{Lemma~4.4}. While the latter only considers the case $a_0\equiv 1$ in \eqref{eq:unif-elliptic}, it is easily seen that its proof also holds for our $a_0(x_n)=(x_n/d)^\al$ since \cite{DonPha23}*{Theorem~4.1 and Proposition~4.4} (see also \cite{BeDo25}) is applicable in this case with such $a_0$.
\end{proof}

By combining the preceding two lemmas, we can argue as in the proof of \cite{DonJeoVit23}*{Lemma~2.7} to obtain the following growth estimate of $\psi$.

\begin{lemma}
    \label{lem:psi-est}
    Let $p$, $\be$ and $\be'$ be as before. If $X_0\in Q_3^+$ and $0<\rho\le r<1/8$, then
    \begin{align}
    \label{eq:psi-est}
    \psi(X_0,\rho)\le \begin{multlined}[t]C(\rho/r)^{\be'}\left(\dashint_{Q_{5r}^+(X_0)}|\D u|d\mu+\|\bg\|_{L^\infty(Q_4^+)}\right)\\+C\left(\|\D u\|_{L^\infty(Q_{8r}^+(X_0))}\hat\eta_A(\rho)+\hat\eta_\bg(\rho)\right),
\end{multlined}\end{align}
where $C=C(n,m,\la,\al,p,\be)>0$ and $\hat\eta_\bullet$ is as in \eqref{eq:hat-eta}.
\end{lemma}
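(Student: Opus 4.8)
The plan is to combine Lemma~\ref{lem:psi-bdry-est} and Lemma~\ref{lem:psi-int-est} via a case analysis on the position of $X_0$ relative to the flat boundary, exactly as in \cite{DonJeoVit23}*{Lemma~2.7}, and then to convert the resulting iteration inequality into the stated growth bound by replacing the $\tilde\eta_\bullet$'s with their nonincreasing-quotient majorants $\hat\eta_\bullet$. First I would fix $X_0\in Q_3^+$ and $0<\rho\le r<1/8$, and distinguish three regimes according to $d=d_{X_0}$: (i) $r<d/4$, a purely interior situation where \eqref{eq:psi-int-est} applies directly; (ii) $\rho<d/4\le r$, where one first applies the interior estimate \eqref{eq:psi-int-est} on the scale range $(\rho,d/4)$ and then the boundary estimate \eqref{eq:psi-bdry-est} (centered at the projection $\bar X_0=(X_0',0)\in Q_3'$, using that $Q_{cr}^+(\bar X_0)$ contains $Q_r(X_0)$ for a suitable absolute constant) on the range $(d/4,r)$, chaining the two via the common scale $d/4$; (iii) $d/4\le \rho$, which is a pure boundary estimate after passing to $\bar X_0$. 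In each case the geometric factor $(\rho/r)^{\be'}$ is preserved up to a multiplicative constant because $\be'<1$ makes the product of two such factors across the junction scale dominated by a single one.

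Next I would absorb the supremum norms on the right. In cases (ii) and (iii) the boundary estimate \eqref{eq:psi-bdry-est} comes with $\|\D u\|_{L^\infty(Q_{2r}^+(\bar X_0))}$; after rescaling the centers this is controlled by $\|\D u\|_{L^\infty(Q_{8r}^+(X_0))}$, which is the form appearing in \eqref{eq:psi-est}. Likewise the interior estimate \eqref{eq:psi-int-est} carries $\|\D u\|_{L^\infty(Q_r(X_0))}\le\|\D u\|_{L^\infty(Q_{8r}^+(X_0))}$. The remaining task is to bound $\psi(X_0,r)$ (or $\psi(\bar X_0,r)$) on the largest scale $\sim r$ by the average $\dashint_{Q_{5r}^+(X_0)}|\D u|\,d\mu+\|\bg\|_{L^\infty(Q_4^+)}$. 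This follows from the definition of $\psi$: choosing the competitor $\bq'=0$ (resp.\ $\bq=0$) and using $|U|=|A^{n\de}D_\de u-g_n|\lesssim|\D u|+\|\bg\|_{L^\infty}$, together with the comparability of $Q_r^+(X_0)$, $Q_r(X_0)$, and a slightly dilated version, and the $L^p$–$L^1$ bound for $0<p<1$ provided by a reverse-type/interpolation inequality (or simply $\|\cdot\|_{L^p}\lesssim\|\cdot\|_{L^1}^{\theta}\|\cdot\|_{L^\infty}^{1-\theta}$ after noting $\D u\in L^\infty$ in the a priori setting).

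Finally I would pass from $\tilde\eta_\bullet$ to $\hat\eta_\bullet$. The iteration that produces \eqref{eq:psi-est} sums contributions $\sum_j(\rho/r_j)^{\be'}(\cdots)\tilde\eta_\bullet(r_j)$ over the dyadic scales $r_j=2^{-j}r$ between $\rho$ and $r$; grouping the geometric weight with $\tilde\eta_\bullet(r_j)$ and using that $\hat\eta_\bullet(\rho)=\sup_{\rho\le s<1}(\rho/s)^{\be'}\tilde\eta_\bullet(s)\ge(\rho/r_j)^{\be'}\tilde\eta_\bullet(r_j)$ bounds the whole sum by $C\hat\eta_\bullet(\rho)$, since $\sum_j$ of the leftover geometric factors converges. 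The properties that $\tilde\eta_\bullet,\hat\eta_\bullet$ are Dini functions, satisfy \eqref{eq:alm-mon}, and that $r\mapsto\hat\eta_\bullet(r)/r^{\be'}$ is nonincreasing — all recorded after \eqref{eq:hat-eta} — are exactly what makes this summation legitimate. I expect the main obstacle to be the bookkeeping in regime (ii): one must track the several dilation constants relating the cylinders centered at $X_0$ and at $\bar X_0$ so that the junction at scale $d/4$ is handled without losing the $(\rho/r)^{\be'}$ decay, and verify that the $L^\infty$ gradient terms and the additive $\tilde\eta_\bullet$ terms generated at the junction are absorbed into the stated right-hand side rather than producing an extra scale-dependent factor; but this is routine given Lemmas~\ref{lem:psi-bdry-est} and~\ref{lem:psi-int-est} and the elliptic template \cite{DonJeoVit23}*{Lemma~2.7}.
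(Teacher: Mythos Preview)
Your proposal is correct and follows essentially the same approach as the paper, which simply refers to \cite{DonJeoVit23}*{Lemma~2.7} for the combination of Lemmas~\ref{lem:psi-bdry-est} and~\ref{lem:psi-int-est}. Two minor points: the $L^p$--$L^1$ bound for $0<p<1$ is immediate from Jensen's inequality (no interpolation needed), and the passage $\tilde\eta_\bullet\to\hat\eta_\bullet$ does not require a dyadic summation---since Lemmas~\ref{lem:psi-bdry-est} and~\ref{lem:psi-int-est} already carry $\tilde\eta_\bullet$ at the smaller scale, the only extra term comes from the single junction scale $\sim d$ in regime~(ii), and $(\rho/d)^{\be'}\tilde\eta_\bullet(d)\le\hat\eta_\bullet(\rho)$ by the definition of $\hat\eta_\bullet$.
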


Once we have Lemma~\ref{lem:psi-est}, we can follow the approach in \cite{DonJeoVit23} to derive Theorem~\ref{thm:DMO-reg}. Therefore, we will only outline the key steps below.

For $X_0\in Q_3^+$ and $0<r<1/8$, we take a vector $\bq_{X_0,r}\in\R^n$ such that
\begin{align}\label{eq:psi-inf}
    \psi(X_0,r)=\begin{cases}
        \left(\dashint_{Q_r(X_0)}|(\D_{x'}u,U^{X_0})-\bq_{X_0,r}|^pd\mu\right)^{1/p},&0<r\le d/4,\\
        \left(\dashint_{Q_r^+(X_0)}|(\D_{x'}u, U)-\bq_{X_0,r}|^pd\mu\right)^{1/p},&d/4<r<1/8.
    \end{cases}
\end{align}
Notice that the last component of $\bq_{X_0,r}$ is zero when $d/4<r<1/8$.

\begin{lemma}\label{lem:gradient-unif-est}
It holds that
    \begin{align}\label{eq:gradient-unif-est}
        \|\D u\|_{L^\infty(Q_2^+)}\le C\int_{Q_4^+}|u|d\mu+C\int_0^1\frac{\hat\eta_\bg(\rho)}\rho d\rho+C\|\bg\|_{L^\infty(Q_4^+)},
    \end{align}
    for some constant $C>0$ depending only on $n,m,\la,\al,p$.
\end{lemma}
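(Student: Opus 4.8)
The plan is to derive \eqref{eq:gradient-unif-est} from the growth estimate \eqref{eq:psi-est} by a standard Campanato-type argument, summing the oscillation over dyadic scales and then using a local $L^\infty$ bound on $\D u$ in terms of its $L^1(d\mu)$ average. First I would fix $X_0 \in Q_2^+$ and apply Lemma~\ref{lem:psi-est} with a dyadic sequence of radii $r_j = 2^{-j}r_0$ for a suitable fixed $r_0 \in (0,1/8)$ (say $r_0 = 1/16$), comparing $\psi(X_0, r_{j+1})$ to $\psi(X_0, r_j)$. Since $\hat\eta_A$ and $\hat\eta_\bg$ are Dini functions, the telescoping sum $\sum_j \hat\eta_\bg(r_j)$ is controlled by $\int_0^1 \frac{\hat\eta_\bg(\rho)}{\rho}\,d\rho$ (up to a constant from \eqref{eq:alm-mon}), and the $(\rho/r)^{\be'}$ factor makes the geometric contribution of the first term on the right of \eqref{eq:psi-est} summable. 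This shows that the vectors $\bq_{X_0, r_j}$ form a Cauchy sequence, with limit — the Lebesgue value of $(\D_{x'}u, U^{X_0})$ or $(\D_{x'}u, U)$ at $X_0$ — bounded by
$$
C\left(\dashint_{Q_{5r_0}^+(X_0)}|\D u|\,d\mu + \|\bg\|_{L^\infty(Q_4^+)} + \|\D u\|_{L^\infty(Q_{8r_0}^+(X_0))}\int_0^1\frac{\hat\eta_A(\rho)}{\rho}d\rho + \int_0^1\frac{\hat\eta_\bg(\rho)}{\rho}d\rho\right).
$$

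The subtlety is that the right-hand side still contains $\|\D u\|_{L^\infty}$ over a slightly larger set, so a direct bound on $|\D u(X_0)|$ this way is circular. To close the loop I would introduce the quantity $M := \|\D u\|_{L^\infty(Q_3^+)}$ (or over an appropriate intermediate cylinder), take the supremum of the pointwise bound above over $X_0 \in Q_2^+$, and observe that the coefficient of $M$ on the right is $C\int_0^1 \frac{\hat\eta_A(\rho)}{\rho}d\rho$, which is \emph{not} automatically small. The standard fix is a covering/scaling argument: one proves the estimate first on small sub-cylinders of radius $R$ where the corresponding truncated integral $\int_0^R \frac{\hat\eta_A(\rho)}{\rho}d\rho$ can be made $\le \frac{1}{2C}$, absorbs $\frac12 M$ into the left-hand side, and then patches the finitely many sub-cylinders together, picking up interpolation terms of the form $\e M + C_\e \dashint |\D u|\,d\mu$ handled by a standard interpolation inequality between $\|\D u\|_{L^\infty}$ and $\|u\|_{L^1(d\mu)}$ (with a small loss controlled by $\|\bg\|_{L^\infty}$ and the Dini integrals). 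This is exactly the mechanism used in \cite{DonJeoVit23} and \cite{Don12}, so I would follow those references.

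After the absorption, what remains is to convert $\dashint_{Q_{5r_0}^+(X_0)} |\D u|\,d\mu$ into $\int_{Q_4^+}|u|\,d\mu$. This is a Caccioppoli-type estimate for the degenerate system \eqref{eq:pde-par}: testing with $u$ times a cutoff gives $\int |\D u|^2\,d\mu \lesssim \int |u|^2\,d\mu + \int |\bg|^2\,d\mu$ on nested cylinders, and then a further interpolation/iteration converts the $L^2$ average of $u$ into an $L^1$ average (again via the local boundedness results \cite{DonPha23}*{Theorem~4.1} cited in the excerpt). Combining the pieces yields \eqref{eq:gradient-unif-est} with the stated dependence of the constant on $n, m, \la, \al, p$, where the $p$-dependence enters only through the exponents in \eqref{eq:psi-est} and the interpolation with the $L^p$ average is upgraded to $L^1$ by Hölder since $p < 1$.

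The main obstacle I anticipate is the absorption step: making precise the localization that renders $C\int_0^R \frac{\hat\eta_A(\rho)}{\rho}d\rho < 1/2$, tracking how the finitely many overlapping sub-cylinders contribute, and ensuring the interpolation error terms are genuinely of lower order (i.e., truly absorbable, not merely comparable). Everything else — the dyadic summation, the Cauchy-sequence argument for $\bq_{X_0,r}$, and the Caccioppoli estimate — is routine given Lemma~\ref{lem:psi-est} and the cited regularity theory, and closely parallels \cite{DonJeoVit23}, so I would state it briefly and refer there for the details.
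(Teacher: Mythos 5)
Your plan follows the paper's route: iterate Lemma~\ref{lem:psi-est} over dyadic scales to show $\bq_{X_0,r_j}$ is Cauchy with limit $(\D_{x'}u(X_0),U(X_0))$, recover $\D u(X_0)$ from $U=A^{n\de}D_\de u-g_n$, absorb the term $\|\D u\|_{L^\infty}\int_0^{2r_0}\frac{\hat\eta_A(\rho)}{\rho}d\rho$, and then use Caccioppoli plus parabolic embedding to pass from $\int|\D u|d\mu$ to $\int|u|d\mu$. This is exactly what the paper does by deferring to Step~1 of \cite{DonJeoVit23}*{Lemma~2.8}.

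One part of your description is off, though it does not invalidate the plan. The absorption is not really a covering/patching argument and does not require the interpolation $\e\|\D u\|_\infty+C_\e\dashint|\D u|\,d\mu$. Once you fix a radius $r_0$ small enough that $C_0\int_0^{2r_0}\frac{\hat\eta_A(\rho)}{\rho}d\rho<\frac12$, the local average terms $\dashint_{Q_{5r_0}^+(X_0)}|\D u|\,d\mu$ and $\bigl(\dashint_{Q_{r_0}^+(X_0)}|\D u|^p\,d\mu\bigr)^{1/p}$ (the latter bounded by the former via Jensen, as $p<1$) are controlled directly by $r_0^{-(n+2+|\al|)}\int_{Q_3^+}|\D u|\,d\mu$ because the $\mu$-measure of $Q_{5r_0}^+(X_0)$ is bounded below; no interpolation with $\|\D u\|_\infty$ is required there. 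What is needed is to handle the domain mismatch between $Q_\rho^+$ on the left and $Q_{\rho+8r_0}^+$ in the factor $\frac12\|\D u\|_{L^\infty(Q_{8r_0}^+(X_0))}$ on the right. This is done with a Giaquinta-type iteration lemma on $\rho\mapsto\|\D u\|_{L^\infty(Q_\rho^+)}$, $\rho\in[2,3]$ (allowing the small-gap constraint $\rho'-\rho\gtrsim r_0$), not by a literal cover-and-patch. Since you explicitly say you would follow \cite{DonJeoVit23} and \cite{Don12} for details, this is a presentational imprecision rather than a gap, but be aware the ``interpolation error terms'' you worry about do not actually arise at this stage — they only enter in the final Caccioppoli/embedding step, which you describe separately and correctly.
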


\begin{proof}
With Lemmas~\ref{lem:psi-bdry-est}-\ref{lem:psi-est} at hand, we can follow the argument in Step 1 in the proof of \cite{DonJeoVit23}*{Lemma~2.8} to obtain 
$$
\|\D u\|_{L^\infty(Q_2^+)}\le C\int_{Q_3^+}|\D u|d\mu+C\int_0^1\frac{\hat\eta_\bg(\rho)}\rho d\rho+C\|\bg\|_{L^\infty(Q_4^+)}.
$$
Then \eqref{eq:gradient-unif-est} follows by the Caccioppoli inequality, parabolic embedding and iteration.
\end{proof}

To proceed, given $0<\be<1$, we consider a modulus of continuity $\omega_x:[0,1)\to[0,\infty)$ defined by
\begin{align}\label{eq:omega-x}
    \begin{split}
        \omega_x(r)
        &=\left(\int_{Q_4^+}|u|d\mu+\|\bg\|_{L^\infty(Q_4^+)}+\int_0^1\frac{\hat\eta_\bg^\al(\rho)}\rho d\rho \right)\left(\int_0^r\frac{\hat\eta_A^\al(\rho)}\rho d\rho+r^{\be}\right)\\
        &\qquad+\int_0^r\frac{\hat\eta_\bg^\al(\rho)}\rho d\rho.
    \end{split}
\end{align}
Recall \eqref{eq:mod-conti-sigma}. By Lemma~\ref{lem:Dini-sum-est}, we have
\begin{align}
    \label{eq:mod-conti}\omega_x(r)&\lesssim\sigma_0(r).
\end{align}

\begin{lemma}
   Let $\be\in(0,1)$. For any $X_0\in Q_1^+$ and $0<r<1/18$, we have
    \begin{align}\label{eq:u-q-diff}\begin{split}
        &|(\D_{x'}u(X_0),U(X_0))-\bq_{X_0,r}|\le C\omega_x(r)
    \end{split}\end{align}
for some constant $C>0$ depending only on $n,m,\la,\al,p$, and $\be$.
\end{lemma}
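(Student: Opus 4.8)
The plan is to show that $\bq_{X_0,r}$ converges as $r\to0$, with the limit being $(\D_{x'}u(X_0),U(X_0))$, and that the rate of convergence is controlled by $\omega_x$. First I would compare $\bq_{X_0,r}$ and $\bq_{X_0,\ka r}$ for the small dyadic factor $\ka\in(0,1/2)$ fixed in Lemma~\ref{lem:psi-bdry-est}. Using the triangle inequality inside $L^p(d\mu)$ over $Q_{\ka r}^+(X_0)$ (or $Q_{\ka r}(X_0)$ in the interior regime $r\le d/4$), together with the doubling property of $d\mu$ and the definitions \eqref{eq:psi-inf}, one obtains the standard Campanato-type bound
\begin{align*}
    |\bq_{X_0,\ka r}-\bq_{X_0,r}|\lesssim \ka^{-(n+2+\al)/p}\bigl(\psi(X_0,r)+\psi(X_0,\ka r)\bigr)\lesssim \ka^{-(n+2+\al)/p}\psi(X_0,r),
\end{align*}
where the last step uses $\psi(X_0,\ka r)\le C\psi(X_0,r)$, a consequence of Lemmas~\ref{lem:psi-bdry-est}--\ref{lem:psi-int-est} (the extra $\tilde\eta$ and $\hat\eta$ terms are bounded by a constant times $\psi(X_0,r)$ plus the gradient sup-bound from Lemma~\ref{lem:gradient-unif-est}). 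Iterating and summing the geometric series, and plugging in the growth estimate \eqref{eq:psi-est} for $\psi(X_0,\ka^j r)$, gives for $0<\rho<r<1/18$
\begin{align*}
    |\bq_{X_0,\rho}-\bq_{X_0,r}|\lesssim \Bigl(\|\D u\|_{L^\infty(Q_4^+)}+\|\bg\|_{L^\infty(Q_4^+)}\Bigr)\Bigl(\sum_{\ka^j r\le 1}\bigl((\ka^j)^{\be'}+\hat\eta_A(\ka^j r)+\hat\eta_\bg(\ka^j r)\bigr)\Bigr),
\end{align*}
and the right-hand side is $\lesssim\omega_x(r)$ after using Lemma~\ref{lem:gradient-unif-est} to absorb $\|\D u\|_{L^\infty}$, the monotonicity of $r\mapsto\hat\eta_\bullet(r)/r^{\be'}$, and the comparison $\sum_j\hat\eta_\bullet(\ka^j r)\sim\int_0^r\frac{\hat\eta_\bullet(\rho)}\rho d\rho$ (this is exactly the kind of Dini-sum bookkeeping packaged in Lemma~\ref{lem:Dini-sum-est}). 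In particular $\{\bq_{X_0,r}\}$ is Cauchy as $r\to0$; denote its limit by $\bq_{X_0}$, so $|\bq_{X_0}-\bq_{X_0,r}|\lesssim\omega_x(r)$.

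Next I would identify the limit. Since $\D_{x'}u$ is continuous in $\overline{Q_1^+}$ and $U=A^{n\de}D_\de u-g_n$ is continuous in $\overline{Q_1^+}$ by Theorem~\ref{thm:DMO-reg}, and since for $d/4<r<1/8$ the function $U^{X_0}$ coincides with $U$ on $Q_r^+(X_0)$ (while for $r\le d/4$ we have $U^{X_0}(X)=(x_n/d)^\al U(X)\to U(X_0)$ as $X\to X_0$ because $x_n/d\to1$), Lebesgue differentiation against the measure $d\mu$ forces
\begin{align*}
    \Bigl(\dashint_{Q_r^+(X_0)}\bigl|(\D_{x'}u,U^{X_0})-(\D_{x'}u(X_0),U(X_0))\bigr|^p d\mu\Bigr)^{1/p}\to 0\quad(r\to0),
\end{align*}
so the infimum defining $\psi(X_0,r)$ is attained in the limit at $(\D_{x'}u(X_0),U(X_0))$, giving $\bq_{X_0}=(\D_{x'}u(X_0),U(X_0))$. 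Combining with the previous paragraph yields \eqref{eq:u-q-diff}. One caveat to handle carefully: for $X_0\in Q_1^+$ with small $d=d_{X_0}$, the estimate must bridge the two regimes $r\le d/4$ and $r>d/4$; at the transition scale $r\approx d/4$ the two definitions of $\psi$ and of $\bq_{X_0,r}$ are comparable (the weight $(x_n/d)^\al$ is bounded above and below on $Q_r^+(X_0)$ when $r\le d/4$), so the telescoping sum runs without interruption, and for $r>d/4$ we are simply in the boundary regime governed by Lemma~\ref{lem:psi-bdry-est}.

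The main obstacle I expect is the Dini-sum bookkeeping in the transition between regimes and, relatedly, making sure the constant in \eqref{eq:u-q-diff} does not degenerate as $d_{X_0}\to0$: one needs the growth estimates to be uniform in $X_0$, which is why Lemma~\ref{lem:psi-est} is stated with $\hat\eta_\bullet$ (whose scaled version $r\mapsto\hat\eta_\bullet(r)/r^{\be'}$ is monotone) rather than $\eta_\bullet$ directly, and why the sup-norm of $\D u$ on a fixed large cylinder $Q_4^+$ (Lemma~\ref{lem:gradient-unif-est}) is used in place of the local $\|\D u\|_{L^\infty(Q_{8r}^+(X_0))}$. With those uniform ingredients in hand the argument is the standard Campanato iteration, following \cite{DonJeoVit23}*{Lemma~2.9}; I would simply point to that reference for the routine summation and record \eqref{eq:u-q-diff} together with the identification of the limit.
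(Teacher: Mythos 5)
Your Campanato-iteration strategy — telescoping the differences $|\bq_{X_0,\ka^{j+1}r}-\bq_{X_0,\ka^jr}|\lesssim\ka^{-(n+2+\al)/p}\psi(X_0,\ka^jr)$, plugging in the growth estimate \eqref{eq:psi-est}, summing the Dini terms, and handling the interior/boundary transition at $r\approx d/4$ — is exactly the approach the paper points to (\cite{DonJeoVit23}*{Lemma~2.9}), and the bookkeeping you describe matches Lemma~\ref{lem:Dini-sum-est} and Lemma~\ref{lem:gradient-unif-est}. However, there is one genuine logical gap in the identification of the limit.

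You justify $\bq_{X_0}=(\D_{x'}u(X_0),U(X_0))$ by asserting that $\D_{x'}u$ and $U$ are continuous "by Theorem~\ref{thm:DMO-reg}." This is circular: in the paper, the present lemma is one of the ingredients used in the proof of Theorem~\ref{thm:DMO-reg}, so its continuity conclusion is not available here. The correct argument does not need continuity. Since $d\mu=x_n^\al dX$ with $\al>-1$ is a doubling measure, the Lebesgue differentiation theorem gives $\bq_{X_0,r}\to(\D_{x'}u(X_0),U(X_0))$ for $\mu$-a.e.\ $X_0$; on the remaining null set one simply takes the Cauchy limit $\bq_{X_0}:=\lim_{r\to0}\bq_{X_0,r}$ (which you have already shown exists and satisfies $|\bq_{X_0}-\bq_{X_0,r}|\lesssim\omega_x(r)$) as the defining value of the representative. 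Alternatively, one can note that in the a priori step the data and solution are mollified to be smooth, so the pointwise values and their limits are unambiguous; but then the reference to Theorem~\ref{thm:DMO-reg} should be replaced by "smoothness of the approximating solution," not the theorem being proved.

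A smaller remark: the intermediate claim that $\psi(X_0,\ka r)\le C\psi(X_0,r)$ "because the extra $\tilde\eta$ and $\hat\eta$ terms are bounded by a constant times $\psi(X_0,r)$ plus the gradient sup-bound" is not correct as stated — the Dini-modulus error terms in \eqref{eq:psi-bdry-est} need not be dominated by $\psi(X_0,r)$. It is also unnecessary: after the triangle inequality the telescoping sum is $\sum_j\bigl(\psi(X_0,\ka^jr)+\psi(X_0,\ka^{j+1}r)\bigr)\lesssim\sum_j\psi(X_0,\ka^jr)$, and you then bound each $\psi(X_0,\ka^jr)$ directly via \eqref{eq:psi-est}; no monotonicity of $\psi$ itself is used. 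Your final displayed bound also slightly misattributes the $\hat\eta_\bg$ contribution, which should appear both as a standalone term $\int_0^r\hat\eta_\bg(\rho)\rho^{-1}d\rho$ and, via Lemma~\ref{lem:gradient-unif-est}, inside the prefactor multiplying $\hat\eta_A$ — that is the two-line structure of $\omega_x$ in \eqref{eq:omega-x}. These are matters of presentation; once the Theorem~\ref{thm:DMO-reg} reference is replaced by the Lebesgue-differentiation argument, the proof is sound.
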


\begin{proof}
The proof of this lemma is similar to that of \cite{DonJeoVit23}*{Lemma~2.9}.
\end{proof}

We are now ready to prove Theorem~\ref{thm:DMO-reg}.

\begin{proof}[Proof of Theorem~\ref{thm:DMO-reg}]
The Lipschitz regularity of $u$ in $x$ follows from Lemma~\ref{lem:gradient-unif-est}. Moreover, by following the argument in \cite{DonJeoVit23}*{Theorem~2.4}, we can show that for any $X_0,Y_0\in Q_1^+$ with $r:=|x_0-y_0|+|t_0-s_0|^{1/2}>0$,
\begin{equation}\label{eq:mod-conti-space-est}
    |(\D_{x'} u(X_0),U(X_0))-(\D_{x'} u(Y_0),U(Y_0))|\le C\omega_x(r).\qedhere
\end{equation}
\end{proof}

\subsection{The regularity of \texorpdfstring{$u$}{} in time}\label{subsec:reg-time}
In this section, we write for simplicity
$$
d\mu=x_n^{\al^-}dX,\quad \eta_A=\eta_A^{\al^-},\quad \eta_\bg=\eta_\bg^{\al^-}.
$$
Note that we used the same notation in Section~\ref{subsec:reg-space}, but with $\al$ in the place of $\al^-$; see \eqref{eq:not}.

We fix a constant $0<\be<1$ and recall $\be'=\frac{\be+1}2$. We introduce several Dini functions that will be used in this section. We simply write $\|\D u\|_{L^\infty(Q_2^+)}=\|\D u\|_{\infty}$, $\|u\|_{L^1(Q_4^+,d\mu)}=\|u\|_{1,\mu}$ and $\|\bg\|_{L^\infty(Q_4^+)}=\|\bg\|_{\infty}$. Set
\begin{align}
    \label{eq:Dini-tilde}\begin{split}
    &\eta(r):=(\|\D u\|_{\infty}+\|\bg\|_{\infty})\eta_A(r)+\eta_\bg(r),\\
    &\tilde\eta(r):=\left(\|u\|_{1,\mu}+\|\bg\|_{\infty}+\int_0^1\frac{\eta_\bg(\rho)}\rho d\rho\right)(\tilde\eta_A(r)+r^{\be'})+\tilde\eta_\bg(r),\\
    &\hat\eta(r):=\left(\|u\|_{1,\mu}+\|\bg\|_{\infty}+\int_0^1\frac{\eta_\bg(\rho)}\rho d\rho\right)(\hat\eta_A(r)+r^{\be'})+\hat\eta_\bg(r),\\
    &\bar\eta(r):=\sum_{k=0}^\infty\frac1{2^k}\hat\eta(r/2^k),
\end{split}\end{align}
where $\tilde\eta_\bullet$ and $\hat\eta_\bullet$ are as in \eqref{eq:tilde-eta} and \eqref{eq:hat-eta}, respectively. Note that $\eta\le\tilde\eta\le\hat\eta\le\bar\eta$. We define
\begin{align}
\label{eq:omega-t}
\omega_t(r):=\bar\eta(r^{1/2}).
\end{align}
Thanks to Lemma~\ref{lem:Dini-sum-est}, it is easily seen that for $\sigma_0$ as in \eqref{eq:mod-conti-sigma}
\begin{align}
    \label{eq:mod-conti-time}
    \omega_t(r)\lesssim\sigma_0(r^{1/2}).
\end{align}

The purpose of this section is to establish the following type of regularity for solutions in time, which will play a significant role in the higher-order Schauder type estimates in the next section.

\begin{theorem}
    \label{thm:sol-time-est}Let $u$ be a solution of \eqref{eq:pde-par} in $Q_4^+$, $A,\bg\in\cH_{\al^-}(\overline{Q_4^+})$, and $0<\be<1$. Then, for any $0<h<1/4$ and $(t,x)\in Q_1^+$, we have
    \begin{align}\label{eq:sol-time-est}
    \frac{|\delta_{t,h}u(t,x)|}{h^{1/2}}\le C\omega_t(h),
    \end{align}
    where $\omega_t$ is as in \eqref{eq:omega-t} and $C=C(n,m,\al,\la,\be)>0$.
\end{theorem}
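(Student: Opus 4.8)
The plan is to run a Campanato-type iteration at each point $X_0\in Q_1^+$, comparing $u$ to solutions of a frozen, $X'$-averaged equation, but measuring decay of the \emph{time difference quotient} $\delta_{t,h}u/h^{1/2}$ rather than of the spatial gradient. Fix $X_0$ and write $d=d_{X_0}$. As in the proof of the spatial estimate, the key geometric dichotomy is whether the scale $r$ is below or above $d/4$: for $r<d/4$ the equation is uniformly parabolic (after the change $(A^{\g\de})^{X_0}=(x_n/d)^\al A^{\g\de}$, $a_0=(x_n/d)^\al$) and one may invoke the uniformly parabolic machinery of \cite{DonXu21}; for $r\ge d/4$ one works with the genuinely degenerate/singular operator near the flat boundary. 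I would define, for a parameter $0<p<1$, a quantity $\Phi(X_0,r)$ analogous to $\psi$ in Section~\ref{subsec:reg-space} but built from $(\delta_{t,h}u)/h^{1/2}$ (and its associated conormal flux $\delta_{t,h}U/h^{1/2}$) with a best-constant-vector infimum, restricted to scales $r\gtrsim h^{1/2}$ so that the difference quotient is a meaningful object on $Q_r$.

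The main steps, in order: (1) Fix $r$ and replace $A,\bg$ by their $X'$-averages $\bar A,\bar\bg$ over $Q_{2r}'$; subtract the one-variable solution $u_0$ and solve an auxiliary problem $w$ with the frozen operator and right-hand side built from $(\bar A-A)D u+\bg-\bar\bg$ localized to $\cQ_r$, exactly as in Lemma~\ref{lem:psi-bdry-est}. Apply the weak type-$(1,1)$ estimates of Lemma~\ref{lem:weak-type-(1,1)} to get $L^p$-control of $\nabla w$ (and, crucially, of $\delta_{t,h}\nabla w/h^{1/2}$: here one also commutes the time difference through the equation, using that $\bar A,\bar\bg$ are independent of $t$ only up to their $\cH$-regularity, which is precisely where the $\cH^{k-1}_{\al^-}$ versus $\cH^{k-1}_\al$ issue enters — see the remark after Theorem~\ref{thm:DMO-HO-reg-par}). (2) For the homogeneous remainder $v=u_e-w$ solving \eqref{eq:hom-repla-pde}, note $\partial_t v$ and $D_{x'}v$ again solve \eqref{eq:hom-repla-pde}, so by \cite{DonPha23}*{Lemmas 4.2, 4.3, Proposition 4.4} (and \cite{BeDo25} for systems) one has interior/boundary $C^{1/2,1}$ estimates; integrating the $x_n$-ODE for $V=\bar A^{n\de}D_\de v$ as in \eqref{eq:V-est} controls the flux. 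Feeding a suitable affine correction $\tilde v$ (absorbing an arbitrary constant vector) yields a one-step decay $\Phi(X_0,\ka r)\le C\ka\,\Phi(X_0,r)+C\ka^{-N}\bigl(\|\nabla u\|_{L^\infty(Q_{2r}^+)}\eta_A(2r)+\eta_\bg(2r)\bigr)$ after dividing by $h^{1/2}$ — but one must check that dividing the $w$-estimate by $h^{1/2}$ is legitimate, i.e. that the right-hand data for the difference-quotient problem genuinely carry a factor $h^{1/2}$; this comes from the time-regularity of $A,\bg$ in $\cH_{\al^-}$ together with $\|\partial_t u\|$ bounds from \cite{DonPha23}. (3) Iterate: choosing $\ka$ with $C\ka\le\ka^{\be'}$ and summing the geometric series produces $\Phi(X_0,\rho)\lesssim (\rho/r)^{\be'}\Phi(X_0,r)+\|\nabla u\|_\infty\hat\eta_A(\rho)+\hat\eta_\bg(\rho)$, valid for $\rho\gtrsim h^{1/2}$; evaluating at the smallest admissible scale $\rho\simeq h^{1/2}$ and using the a priori bound $\|\nabla u\|_{L^\infty(Q_2^+)}\lesssim \|u\|_{1,\mu}+\|\bg\|_\infty+\int_0^1\eta_\bg(\rho)/\rho\,d\rho$ from Lemma~\ref{lem:gradient-unif-est} converts the Campanato quantity at scale $h^{1/2}$ into the pointwise bound $|\delta_{t,h}u(t,x)|/h^{1/2}\lesssim \bar\eta(h^{1/2})=\omega_t(h)$; the extra layer of summation $\bar\eta=\sum 2^{-k}\hat\eta(\cdot/2^k)$ absorbs the telescoping of $\Phi$ across dyadic scales from $h^{1/2}$ up to a fixed radius. (4) Finally, combine the interior ($r<d/4$) and boundary ($r\ge d/4$) regimes and let $\rho\to h^{1/2}$ uniformly over $X_0\in Q_1^+$.

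The hard part will be Step (1)–(2): making the difference-quotient problem have data that is genuinely $O(h^{1/2})$ in the relevant $L^p(d\mu)$ norm. Naively, $\delta_{t,h}$ applied to the source $(\bar A-A)D u+\bg-\bar\bg$ produces terms like $(\delta_{t,h}A)Du$, $\bar A\,\delta_{t,h}Du$, and $\delta_{t,h}\bg$; the first and third are $O(h^{1/2})$ only because $A,\bg\in\cH_{\al^-}$ supplies exactly a $\delta_{t,h}/h^{1/2}$ that is itself $\DMO_{X'}$ (this is the reason $\cH^{k-1}_{\al^-}$ — not the more natural $\cH^{k-1}_\al$ — is imposed), while $\bar A\,\delta_{t,h}Du$ must be handled by iterating the estimate for $\delta_{t,h}\nabla u$ together with the weak-type-$(1,1)$ bound, which forces the restriction to scales $r\gtrsim h^{1/2}$ and a careful bookkeeping of the dependence on $h$. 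The degenerate weight $x_n^\al$ with $\al$ of either sign, and the fact that $D_n v$ need not solve the homogeneous equation, mean the flux $V$ must be tracked separately via its $x_n$-ODE throughout — this is the extra complexity over the uniformly parabolic case in \cite{DonXu21} that the introduction flags.
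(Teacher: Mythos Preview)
There is a genuine gap in your approach. You propose to run the Campanato iteration on the difference quotient $u^h=\delta_{t,h}u/h^{1/2}$, and you claim that ``$A,\bg\in\cH_{\al^-}$ supplies exactly a $\delta_{t,h}/h^{1/2}$ that is itself $\DMO_{X'}$''. This is false. Unpacking the definition~\eqref{eq:coeffi-space} for $k=0$, the space $\cH_{\al^-}=\cH^0_{\al^-}$ requires only that $A,\bg$ are bounded and of $L^1(d\mu)\text{-}\DMO_{X'}$; the clause about $\delta_{t,h}/h^{1/2}$ being $\DMO_{X'}$ is indexed by $i+2j=k-1=-1$ and is vacuous. (That clause first becomes active in $\cH^1_{\al^-}$, which is why the difference-quotient equation \emph{is} used later, in Step~2 of the proof for $k=2$.) Consequently the data $\hat g_\g^h=g_\g^h-(A^{\g\de})^hD_\de u(\cdot-h,\cdot)$ in the equation for $u^h$ is not controlled in any way by the hypotheses of Theorem~\ref{thm:sol-time-est}: neither its $L^\infty$ norm nor its $\DMO_{X'}$ modulus is available, and the iteration for $\Phi$ cannot even get started. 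Your attribution of the $\al^-$ (versus $\al$) hypothesis to this step is also off; the $\al^-$ enters instead through the integration in Lemma~\ref{lem:mean-diff-est}, where one needs $\eta^0_\bullet\lesssim\eta^{\al^-}_\bullet$ via Lemma~\ref{lem:partial-DMO-weight-rela}.

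The paper avoids any time difference of the data by working with $u$ itself. One introduces, for each $X_0$ and each scale $r$, classes $\mathbb A_1^{X_0,r}$, $\mathbb A_2^{X_0,r}$ of $t$-independent ``affine'' correctors (affine in $x'$, with an $x_n$-part fixed by the frozen equation) and the one-variable particular solution $u_0^{X_0,r}$, and measures
\[
\vp(X_0,r)=\inf_{q}\Big(\dashint_{Q_r^+(X_0)}|u-u_0^{X_0,r}-q|^p\,d\mu\Big)^{1/p}.
\]
The homogeneous remainder $v$ satisfies $\hat T_r q=q$ for every $q$ in the class, so the one-step decay is at order $r^{1+\be'}$ (not $r^{\be'}$), yielding $\vp(X_0,r)\lesssim r\hat\eta(r)$ after iteration (Lemmas~\ref{lem:vp-bdry-est}--\ref{lem:vp-est}). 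From this one shows $|u(X_0)-\ell_0^{X_0,r}|\lesssim r\bar\eta(r)$ for the constant term $\ell_0^{X_0,r}$ of the optimal $q^{X_0,r}$; since the correctors are $t$-independent, setting $X_1=(t_0-r^2,x_0)$ and comparing $\ell_0^{X_0,2r}$ with $\ell_0^{X_1,2r}$ on the overlapping cylinders gives $|u(X_0)-u(X_1)|\lesssim r\bar\eta(r)$, which is exactly~\eqref{eq:sol-time-est} with $h=r^2$. The extra summation defining $\bar\eta$ absorbs the dyadic telescoping of the $\ell_\de$'s (Lemma~\ref{lem:l-i-bound} and \eqref{eq:l-bound}), not a telescoping of $\Phi$ down to scale $h^{1/2}$ as you suggest. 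The point is that time regularity of $u$ is extracted from the \emph{spatial} approximation scheme, never from time regularity of $A$ or $\bg$.
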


In \cite{DonXu21}, the first author and X. Lu derived a similar result in a simpler setting, e.g., when $\al=0$ and in Hölder space, by employing the Campanato's method. They worked on $u$ itself rather than its first derivative, and introduced a set of polynomials with respect to $x'$ to estimate the difference between $u$ and some polynomials. We adopt this approach to prove Theorem~\ref{thm:sol-time-est}, but it is worth noting that our case with general $\al>-1$ requires significantly more complex techniques.

For $X_0\in Q_{3}^+$ and $0<r<1/4$, we write for $1\le \g,\de\le n$
\begin{align*}
    (\bar g^\g)^{X_0,r}(x_n)&:=\dashint_{Q'_{2r}(X'_0)}g^\g(Y',x_n)dY',\\ 
   (\bar A^{\g\de})^{X_0,r}(x_n)&:=\dashint_{Q'_{2r}(X_0')}A^{\g\de}(Y',x_n)dY'.
\end{align*}
As before, we write for simplicity 
$
d:=(x_0)_n.
$
When $r\ge d/4$, we denote
\begin{align*}
    {\mathbb{A}}_1^{X_0,r}=\bigg\{q\,:\,q(x)=&\ell_0+\sum_{\de=1}^{n-1}\ell_\de(x_\de-(x_0)_\de)\\
    &-\int_{d}^{x_n}\left((\bar A^{nn})^{X_0,r}(s)\right)^{-1}\sum_{\de=1}^{n-1}(\bar A^{n\de})^{X_0,r}(s)\ell_\de\,ds \bigg\},
\end{align*}
where $\ell_0,\ell_1,\ldots, \ell_{n-1}$ are constants in $\R^m$. On the other hand, when $0<r<d/4$, we define 
\begin{align*}
    \mathbb{A}_2^{X_0,r}=\left\{q(x)+\ell_n\int_{d}^{x_n}\left((\bar A^{nn})^{X_0,r}(s)\right)^{-1}(d/s)^\al ds\,:\, q(x)\in \mathbb{A}_1^{X_0,r} \right\},
\end{align*}
where $\ell_n$ is a constant in $\R^m$. Moreover, we set
$$
u_0^{X_0,r}(x_n):=\int_{d}^{x_n}((\bar A^{nn})^{X_0,r}(s))^{-1}(\bar g^n)^{X_0,r}(s)\,ds.
$$
We fix $0<p<1$ and consider
\begin{align*}
    \vp(X_0,r):=\begin{cases}
        \inf_{q\in \mathbb{A}_{1}^{X_0,r}}\left(\dashint_{Q_r^+(X_0)}|u-u_0^{X_0,r}-q|^pd\mu\right)^{1/p},& r\ge d/4,\\
        \inf_{q\in \mathbb{A}_{2}^{X_0,r}}\left(\dashint_{Q_r(X_0)}|u-u_0^{X_0,r}-q|^pd\mu\right)^{1/p},& 0<r< d/4.
    \end{cases}
\end{align*}
We also consider a function 
\begin{align}\label{eq:q-r}\begin{split}
    q^{X_0,r}(x)=&\ell_0^{X_0,r}+\sum_{\de=1}^{n-1}\ell_\de^{X_0,r}(x_\de-(x_0)_\de)\\
    &+\int_{d}^{x_n}((\bar A^{nn})^{X_0,r}(s))^{-1}\left((d/s)^\al\ell_n^{X_0,r}-\sum_{\de=1}^{n-1}(\bar A^{n\de})^{X_0,r}(s)\ell_\de^{X_0,r}\right)ds,\end{split}
\end{align}
which belongs to $\mathbb{A}_{1}^{X_0,r}$ when $r\ge d/4$ while $\mathbb{A}_2^{X_0,r}$ when $0<r<d/4$ and satisfies
\begin{align}\label{eq:q-phi-eqn}
\left(\dashint_{Q^+_r(X_0)}|u-u_0^{X_0,r}-q^{X_0,r}|^pd\mu\right)^{1/p}=\vp(X_0,r).
\end{align}
Here, $\ell_n^{X_0,r}=0$ when $r\ge d/4$ due to the definition of $\mathbb{A}_1^{X_0,r}$. When $d=0$, we understand $(d/s)^\al \ell_n^{X_0,r}=0$.

A crucial step in proving Theorem~\ref{thm:sol-time-est} is the estimate of $\vp$ presented in Lemma~\ref{lem:vp-est}. To obtain this estimate, we will consider the boundary case (Lemma~\ref{lem:vp-bdry-est}) and the interior case (Lemma~\ref{lem:vp-int-est}). We first establish the following technical lemma.

\begin{lemma}
    \label{lem:mean-diff-est}
    For any $X_0\in\overline{Q_3^+}$, $r\in(0,1)$ and $\ka\in(0,1)$, 
    \begin{align*}
        &\dashint_{Q_{\ka r}^+(X_0)}\int_d^{x_n}|(\bar g^n)^{X_0,\ka r}(s)-(\bar g^n)^{X_0,r}(s)|dsd\mu\lesssim \frac{r\eta_\bg^{\al^-}(r)}{\ka^{n+1}},\\
        &\dashint_{Q_{\ka r}^+(X_0)}\int_d^{x_n}|(\bar A^{\g\delta})^{X_0,\ka r}(s)-(\bar A^{\g\delta})^{X_0,r}(s)|dsd\mu\lesssim \frac{r\eta_A^{\al^-}(r)}{\ka^{n+1}}.
    \end{align*}
\end{lemma}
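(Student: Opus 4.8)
The two estimates are structurally identical, so I would treat them in parallel, working out the details for the coefficient inequality and noting that the same argument applies verbatim to $\bg$ with $\eta_A^{\al^-}$ replaced by $\eta_\bg^{\al^-}$. The key observation is that the inner integral $\int_d^{x_n}|(\bar A^{\g\de})^{X_0,\ka r}(s)-(\bar A^{\g\de})^{X_0,r}(s)|\,ds$ only sees $s$ ranging between $d=(x_0)_n$ and $x_n$, and for $X\in Q_{\ka r}^+(X_0)$ we have $|x_n-d|\le \ka r$, so this is an integral over an interval of length at most $\ka r$. On that interval I would compare each average at scale $\ka r$ with the corresponding average at scale $r$ by adding and subtracting $A^{\g\de}(Y',s)$ itself: for fixed $s$,
\[
|(\bar A^{\g\de})^{X_0,\ka r}(s)-(\bar A^{\g\de})^{X_0,r}(s)|
\le \dashint_{Q'_{2\ka r}(X_0')}\!\!\Big|A^{\g\de}(Y',s)-(\bar A^{\g\de})^{X_0,r}(s)\Big|\,dY'
+ \Big|\dashint_{Q'_{2r}(X_0')}\!\!\big(A^{\g\de}(Y',s)-(\bar A^{\g\de})^{X_0,r}(s)\big)\,dY'\Big|,
\]
and the second term vanishes by the very definition of $(\bar A^{\g\de})^{X_0,r}$ as the $Q'_{2r}$-average. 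So everything reduces to controlling the single oscillation-type quantity $\dashint_{Q'_{2\ka r}(X_0')}|A^{\g\de}(Y',s)-(\bar A^{\g\de})^{X_0,r}(s)|\,dY'$, integrated in $s$ over $(d,x_n)$ and then averaged in $X$ over $Q_{\ka r}^+(X_0)$ against $d\mu$.

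\textbf{Key steps.} First, fix the reduction above so that the left-hand side is bounded by
\[
\dashint_{Q_{\ka r}^+(X_0)}\int_d^{x_n}\dashint_{Q'_{2r}(X_0')}\big|A^{\g\de}(Y',s)-(\bar A^{\g\de})^{X_0,r}(s)\big|\,dY'\,ds\,d\mu(X),
\]
after possibly enlarging $Q'_{2\ka r}$ to $Q'_{2r}$ at the cost of a factor $\ka^{-(n-1)}$ (since $A$ is bounded, or simply because $2\ka r\le 2r$ and we can crudely majorize). Second, swap the order of integration: the $x_n$- and $s$-integrations produce, for each fixed $X'=(t,x')$, a double integral $\int\!\!\int_{\{d\le s\le x_n\}\cap\{0\le x_n\lesssim \ka r\}}$ which I bound by $\ka r$ times the integral of the integrand in $s$ over the $x_n$-slab $(0,c\ka r)$ against $x_n^{\al^-}$ (here one uses that the $d\mu$-measure of $Q_{\ka r}^+(X_0)$ is comparable to $(\ka r)^{n+1+\al^-}$ and that the inner $s$-interval has length $\le \ka r$). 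Third, recognize the resulting spatial quantity $\int_{Q_{2r}^+(X_0)\cap\{x_n\lesssim\ka r\}}|A^{\g\de}(Y',s)-(\bar A^{\g\de})^{X_0,r}(s)|\,d\mu$ — after extending the $s$-average region and using boundedness of $A$ together with the comparability of weights on the relevant cylinder — as dominated by $(\ka r)^{?}\,\eta_A^{\al^-}(2r)$ up to the normalizing volume, where the exponent is chosen so that the final average against $|Q_{\ka r}^+(X_0)|_\mu \sim (\ka r)^{n+1+\al^-}$ yields exactly $r\,\eta_A^{\al^-}(r)\,\ka^{-(n+1)}$ (using $\eta_A^{\al^-}(2r)\sim \eta_A^{\al^-}(r)$ by the almost-monotonicity \eqref{eq:alm-mon}). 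I would carry the $\ka$-powers carefully: each coarsening from scale $\ka r$ to scale $r$ in the $x'$-average loses $\ka^{-(n-1)}$, the $s$-interval contributes $\ka r$, and the $d\mu$-normalization over the small cylinder contributes $\ka^{-(n+1+\al^-)}$ against the $\ka^{1+\al^-}$ gained from integrating $s^{\al^-}$ over a length-$\ka r$ interval near the boundary (when $\al^-<0$ this is where using the nonpositive weight $x_n^{\al^-}$ rather than $x_n^\al$ is essential — the integral $\int_0^{\ka r}s^{\al^-}\,ds$ converges and equals $(\ka r)^{1+\al^-}/(1+\al^-)$ since $\al^-\in(-1,0]$), netting $\ka^{-(n+1)}$ overall.

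\textbf{Main obstacle.} The delicate point is the bookkeeping of weights and $\ka$-powers near $\{x_n=0\}$, and in particular making honest sense of $\int_d^{x_n}$ when $d=(x_0)_n$ may be positive but small relative to $\ka r$, or zero: one must handle uniformly both the genuinely-interior-leaning case $d\gtrsim\ka r$ and the boundary case $d\lesssim \ka r$. In the former case the weight $x_n^{\al^-}$ is comparable to a constant on $Q_{\ka r}^+(X_0)$ and the estimate is essentially the unweighted (Calderón–Zygmund-flavored) one; in the latter the cylinder abuts the boundary and one genuinely integrates the singular/degenerate weight, which is why $\eta_A^{\al^-}$ (defined with respect to $d\mu_{\al^-}$) appears on the right-hand side rather than $\eta_A^\al$. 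The second subtlety — flagged in the remark after Theorem~\ref{thm:DMO-HO-reg-par} and the reason the weaker hypothesis $\cH_{\al^-}$ is imposed — is precisely that one cannot afford the weight $x_n^\al$ here when $\al\ge 0$ is large, because the $s$-integration over a boundary-adjacent slab would then be fine but other terms in the downstream argument (the time-derivative estimates in Lemma~\ref{lem:vp-int-est} and Theorem~\ref{thm:sol-time-est}) would fail; keeping $d\mu_{\al^-}$ throughout this subsection is what makes the chain of estimates close. Once the $\ka$-power accounting is pinned down and \eqref{eq:alm-mon} is invoked to replace $\eta_\bullet^{\al^-}(2r)$ by $\eta_\bullet^{\al^-}(r)$, the result follows.
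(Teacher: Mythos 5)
Your plan has the right shape — Fubini, bound the difference of averages as a DMO oscillation, and count $\ka$-powers — but you miss the one observation that makes the paper's argument close cleanly, and as a result your weight bookkeeping drifts off course.

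The crucial point is this: since the integrand $\int_d^{x_n}|\cdot|\,ds$ depends only on $x_n$, the $d\mu$-average over $Q_{\ka r}^+(X_0)$ collapses to a one-dimensional weighted average in $x_n$. After Fubini in $(s,x_n)$, the inner integral $\int_s^{d+\ka r}x_n^{\al^-}dx_n$ is \emph{trivially} dominated by the normalizing denominator $\int_{(d-\ka r)^+}^{d+\ka r}x_n^{\al^-}dx_n$ (the former domain is a subset of the latter), so the weight cancels completely and one is left with a plain unweighted $s$-integral $\int_d^{d+\ka r}|\bar g_n^{X_0,\ka r}(s)-\bar g_n^{X_0,r}(s)|\,ds$. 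No case-splitting between $d\gtrsim\ka r$ and $d\lesssim\ka r$ is needed, and no tracking of $\ka^{1+\al^-}$ versus $\ka^{-(n+1+\al^-)}$ ever arises. By contrast, your tally tries to balance these weight powers explicitly and never quite nets to $\ka^{-(n+1)}$; moreover, you write the $x'$-cylinder enlargement as costing $\ka^{-(n-1)}$, but the DMO cylinders $Q'_r$ are space-time cylinders with $|Q'_r|\sim r^{n+1}$ (the time dimension counts for two), so replacing $\dashint_{Q'_{2\ka r}}$ by $\int_{Q'_{2r}}$ costs $\ka^{-(n+1)}$ — and that, multiplied against the $s$-interval's $\ka r$ and the normalizing $|Q_{2r}^+|\sim r^{n+2}$ for the unweighted DMO average, is exactly where $\tfrac{r}{\ka^{n+1}}\eta_\bullet^0(r)$ comes from.

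The second place where your reasoning is off is the explanation of why $\eta^{\al^-}$ (rather than $\eta^\al$) appears. It is not a downstream issue about Lemma~\ref{lem:vp-int-est} or Theorem~\ref{thm:sol-time-est}: it happens precisely inside this lemma. After the Fubini cancellation one has an \emph{unweighted} $s$-integral and hence naturally bounds by $\eta^0$. Lemma~\ref{lem:partial-DMO-weight-rela} then gives $\eta^0\lesssim\eta^\theta$ only when $\theta\le 0$, which is exactly $\theta=\al^-$. Were one to insist on $\eta^\al$ with $\al>0$, the inequality would run in the wrong direction. So the hypothesis $\cH_{\al^-}$ is forced at this step, which is why the paper flags this lemma (not a later one) as the reason for imposing $\al^-$.

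In short, your proposal is a correct approach at the level of ingredients but has a genuine gap: without the Fubini-and-cancel trick the $\ka$-power counting does not close, and the stated factor $\ka^{-(n-1)}$ for the $x'$-enlargement is wrong. Once you incorporate the cancellation and use the correct $Q'$-volume, the argument becomes the paper's.
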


\begin{proof}
We only prove the first estimate since the other one can be obtained in a similar way. By applying Fubini's Theorem and Lemma~\ref{lem:partial-DMO-weight-rela}, we obtain
\begin{align*}
    &\dashint_{Q_{\ka r}^+(X_0)}\int_d^{x_n}|(\bar g^n)^{X_0,\ka r}(s)-(\bar g^n)^{X_0,r}(s)|dsd\mu\\
    &=\frac1{\int_{(d-\ka r)^+}^{d+\ka r}x_n^\al dx_n}\int_{(d-\ka r)^+}^{d+\ka r}\int_d^{x_n}|(\bar g^n)^{X_0,\ka r}(s)-(\bar g^n)^{X_0,r}(s)|x_n^\al dsdx_n\\
    &\lesssim \frac{1}{(d+\ka r)^{1+\al}-((d-\ka r)^+)^{1+\al}}\int_d^{d+\ka r}\int_s^{d+\ka r}|(\bar g^n)^{X_0,\ka r}(s)-(\bar g^n)^{X_0,r}(s)|x_n^\al dx_nds\\
    &\lesssim \int_d^{d+\ka r}|(\bar g^n)^{X_0,\ka r}(s)-(\bar g^n)^{X_0,r}(s)|ds\\
    &\lesssim\frac1{(\ka r)^{n+1}}\int_{Q^+_{2r}(X_0)}|g^n(X)-(\bar g^n)^{X_0,r}(x_n)|dX\\
    &\lesssim \frac{r\eta_\bg^0(r)}{\ka^{n+1}}\lesssim \frac{r\eta_\bg^{\al^-}(r)}{\ka^{n+1}}.\qedhere
\end{align*}
\end{proof}

\begin{lemma}
    \label{lem:vp-bdry-est}
    Let $0<p<1$, $0<\be<1$, $\be'=\frac{1+\be}2$, and $\bar X_0\in Q'_3$. Then there exists a constant $\ka\in(0,1/4)$, depending only on $n,m,\al,p,\la$ and $\be$, such that for any $0<r<1/4$,
    \begin{align}\label{eq:vp-bdry-est}
        \vp(\bar X_0,\ka r)\le \ka^{1+\be'}\vp(\bar X_0,r)+Cr\eta(r),
    \end{align}
    where $C=C(n,m,\al,p,\la)>0$.
\end{lemma}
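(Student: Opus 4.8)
The strategy is Campanato iteration: freeze coefficients on slices $\{x_n=\text{const}\}$ by averaging over $Q'_{2r}(\bar X_0')$, compare $u$ with the solution $w$ of the frozen-coefficient problem (with the original right-hand side restricted to $\cQ_r$ and vanishing lateral/initial data), and then exploit the good decay of the difference quotient structure for the homogeneous frozen equation satisfied by $v:=u-u_0^{\bar X_0,r}-w$. Since $\bar X_0\in Q_3'$, we have $d=0$, so $\mathbb{A}_1^{\bar X_0,r}$ consists of affine-in-$x'$ functions corrected by the one-dimensional $x_n$-primitive, and $q^{\bar X_0,r}$ realizes the infimum $\vp(\bar X_0,r)$. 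First I would set up the frozen coefficients $(\bar A^{\g\de})^{\bar X_0,r}$, $\bar g^{\bar X_0,r}$ and note, as in \eqref{eq:u_0-pde}, that $u_0^{\bar X_0,r}$ solves the frozen equation with the averaged data; then $u_e:=u-u_0^{\bar X_0,r}-q^{\bar X_0,r}$ solves a frozen-coefficient equation whose right-hand side involves $(\bar A-A)D u$ and $\bg-\bar\bg$ (plus boundary terms of the same type). Choosing the convex domains $\cD_r,\tilde\cD_r$ with $D_r^+\subset\cD_r\subset D_{4r/3}^+$, $D_{3r/2}^+\subset\tilde\cD_r\subset D_{2r}^+$ and applying the second weak type-$(1,1)$ estimate \eqref{eq:weak-type-2} from Lemma~\ref{lem:weak-type-(1,1)} (now using the $L^0$-trick / interpolation as in \cite{DonJeoVit23}), I get
\begin{align*}
\left(\dashint_{Q_r^+(\bar X_0)}|w|^pd\mu\right)^{1/p}\le Cr\left(\|\D u\|_{L^\infty(Q_{2r}^+(\bar X_0))}\eta_A(2r)+\eta_\bg(2r)\right)\lesssim r\eta(r),
\end{align*}
after absorbing the frozen-data errors via the partial-DMO bound and Lemma~\ref{lem:partial-DMO-weight-rela}; here is exactly where the extra $r$ factor (compared to the gradient estimate \eqref{eq:w-est}) comes from, accounting for the $r\eta(r)$ term in \eqref{eq:vp-bdry-est}.

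\textbf{Decay for the homogeneous part.} Next I would handle $v:=u_e-w$, which satisfies the homogeneous frozen equation \eqref{eq:hom-repla-pde} (with conormal condition) in $Q_r^+(\bar X_0)$. The point is that $v$, $D_{x'}v$, and $\partial_t v$ all solve \eqref{eq:hom-repla-pde}, and the slice structure $D_n(x_n^\al V)=x_n^\al\partial_t v - x_n^\al\sum_{\g<n,\de}\bar A^{\g\de}D_{\g\de}v$ with the conormal condition at $x_n=0$ gives pointwise control of $V=\bar A^{n\de}D_\de v$ of order $x_n$ times interior derivative bounds, exactly as in Lemma~\ref{lem:psi-bdry-est}. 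Using \cite{DonPha23}*{Lemma~4.2, Proposition~4.4} and the standard $L^p$-to-$L^2$ iteration, I obtain that $v$ is well-approximated at scale $\ka r$ by an element $\tilde q\in\mathbb{A}_1^{\bar X_0,r}$ — specifically its first-order Taylor-type expansion in $x'$ together with the matching $x_n$-primitive — with error controlled by $\ka^{1+\be'}$ times the oscillation of $v$ at scale $r$; the $1+\be'$ exponent (rather than just $\be'$) reflects that we are approximating $u$ itself by affine-type functions, so one picks up an extra power of the scale. Combining, and using that $q^{\bar X_0,r}+\tilde q$ is an admissible competitor for $\vp(\bar X_0,\ka r)$ (the composition law for $\mathbb{A}_1$ under translation/averaging), I get
\begin{align*}
\vp(\bar X_0,\ka r)\le C\ka^{1+\be'}\vp(\bar X_0,r)+C\ka^{-N}r\eta(r)
\end{align*}
for some $N=N(n,\al,p)$, and then absorb the constant by first fixing $\ka$ small so that $C\ka^{1+\be'}\le\ka^{1+\be'}$ trivially is not enough — rather one fixes $\ka$ with $C\ka\le 1$ and iterates, or directly chooses $\ka$ so that $C\ka^{1+\be'}\le\ka^{(1+\be')/2}\cdot\ka^{(1+\be')/2}$... more precisely: pick $\ka$ small enough that $C\ka^{(1+\be)/2}\le 1$, giving $\vp(\bar X_0,\ka r)\le \ka^{1+\be'}\vp(\bar X_0,r)+Cr\eta(r)$ after renaming constants. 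The one subtlety is matching the two different reference functions $u_0^{\bar X_0,\ka r}$ and $u_0^{\bar X_0,r}$ and the coefficient averages at scales $\ka r$ versus $r$: this is precisely what Lemma~\ref{lem:mean-diff-est} is for, bounding $\dashint\int_d^{x_n}|\bar g_n^{\bar X_0,\ka r}-\bar g_n^{\bar X_0,r}|\,ds\,d\mu\lesssim r\eta_\bg(r)/\ka^{n+1}$ and similarly for $\bar A$, so the change of reference objects contributes only to the $Cr\eta(r)$ term.

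\textbf{Main obstacle.} I expect the principal difficulty to be the interplay between the degenerate weight $x_n^\al$ and the requirement to approximate by the \emph{affine-type} class $\mathbb{A}_1$ rather than by constants: one must simultaneously control $D_{x'}v$ (genuinely differentiable up to the boundary, so Campanato at the level of gradients applies) and the conormal quantity $V=\bar A^{n\de}D_\de v$, which is only $O(x_n)$ and whose primitive enters the definition of $\mathbb{A}_1$. Keeping track of how the $x_n$-primitives transform when one refines the scale — and ensuring the error is genuinely $r\eta(r)$ and not $\eta(r)$ — requires the Fubini computation of Lemma~\ref{lem:mean-diff-est} together with the bound \eqref{eq:V-est}-type estimate for the homogeneous piece; the non-integer weight forces the careful estimate $(d+\ka r)^{1+\al}-((d-\ka r)^+)^{1+\al}\gtrsim (\ka r)(\text{scale})^\al$ used there. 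A secondary technical point is that the weak type-$(1,1)$ estimate only gives weak-$L^1$ bounds on $w$, so one must run the usual $L^p$ ($p<1$) interpolation argument (as in \cite{DonXu21}, \cite{DonJeoVit23}) to convert these into the $L^p$-averaged bound that feeds the Campanato iteration; this is routine but must be done on the truncated convex domains $\cQ_r,\tilde\cQ_r$ rather than on parabolic cubes. Once all pieces are in place, the iteration inequality \eqref{eq:vp-bdry-est} follows by the standard absorption/summation of the geometric series, completing the proof.
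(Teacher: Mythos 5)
Your overall structure matches the paper's proof quite closely: decompose $u=u_0^{\bar X_0,r}+v+w$ with $w$ carrying the inhomogeneous error and $v$ solving the frozen homogeneous equation with conormal boundary condition, use the second weak type-$(1,1)$ estimate \eqref{eq:weak-type-2} to bound $(\dashint|w|^p\,d\mu)^{1/p}\lesssim r\eta(r)$, approximate $v$ at scale $\ka r$ by a Taylor-type element of $\mathbb{A}_1$, and handle the change of reference objects $u_0^{\bar X_0,\ka r}$ vs.\ $u_0^{\bar X_0,r}$ and $(\bar A)^{\ka r}$ vs.\ $(\bar A)^{r}$ by Lemma~\ref{lem:mean-diff-est}. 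That is essentially the paper's argument.

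However, there is a real gap in the decay rate you claim for the homogeneous piece, and you seem to half-notice it yourself. You assert that the Taylor-type approximation $\hat q_{\ka r}\in\mathbb{A}_1$ satisfies
\[
\left(\dashint_{Q^+_{\ka r}}|v-\hat q_{\ka r}|^p\,d\mu\right)^{1/p}\le C\ka^{1+\be'}\left(\dashint_{Q_r^+}|v|^p\,d\mu\right)^{1/p}.
\]
With this as input, the absorption step cannot close: no choice of $\ka$ turns $C\ka^{1+\be'}$ into $\ka^{1+\be'}$. Your attempted fix (``pick $\ka$ so that $C\ka^{(1+\be)/2}\le1$'') gives $C\ka^{1+\be'}\le\ka$, which is still \emph{larger} than $\ka^{1+\be'}$ since $\be'>0$. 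The correct point, and the one the paper uses in \eqref{eq:hom-repla-poly-est}, is that the Taylor remainder is actually $O(\ka^2)$: using $(v-\hat q_r)(\bar X_0)=0$, the Lipschitz bound $\|\D(v-\hat q_r)\|_{L^\infty(Q^+_{\ka r})}\le C\ka r^{-1}(\dashint_{Q_r^+}|v|^p)^{1/p}$ (which itself costs one power of $\ka$ because $\D_{x'}v-\D_{x'}v(\bar X_0)$ and $V$ are Lipschitz and vanish at $\bar X_0$, respectively on $Q_1'$), and the time-derivative bound $\|\partial_t v\|_{L^\infty(Q^+_{\ka r})}\le Cr^{-2}(\dashint_{Q_r^+}|v|^p)^{1/p}$, one integrates over $Q^+_{\ka r}$ and picks up $\ka r\cdot(\ka/r)+(\ka r)^2\cdot r^{-2}=2\ka^2$. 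Since $\be'<1$, we have $\ka^2<\ka^{1+\be'}$, and the absorption $C\ka^2\le\ka^{1+\be'}$ is exactly the inequality $C\ka^{1-\be'}\le1$, which does hold for $\ka$ small. You should replace the $\ka^{1+\be'}$ in your approximation step with $\ka^2$ and keep the $\ka^{1+\be'}$ only as the final target exponent; the extra factor of $\ka^{1-\be'}$ is the slack that makes the iteration work.

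A secondary point worth being explicit about: the competitor you build at scale $\ka r$ must lie in $\mathbb{A}_1^{\bar X_0,\ka r}$, \emph{not} $\mathbb{A}_1^{\bar X_0,r}$, since the $x_n$-primitive in the definition of $\mathbb{A}_1$ depends on the averaging scale. You correctly flag that Lemma~\ref{lem:mean-diff-est} controls the discrepancy, but one also needs the gradient bound $|\D v(\bar X_0)|\lesssim\|\D u\|_\infty+\|\bg\|_\infty$ (via \cite{DonPha23}*{Proposition~4.4} and \eqref{eq:w-est}) to make the estimate $(\dashint_{Q^+_{\ka r}}|\hat q_{\ka r}-\hat q_r|^p\,d\mu)^{1/p}\lesssim\ka^{-n}r\eta(r)$ quantitative; this is where the $\|\D u\|_\infty$ and $\|\bg\|_\infty$ factors inside $\eta(r)$ actually enter.
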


\begin{proof}
Without loss of generality we may assume $\bar X_0=0$. We simply write $(\bar g^n)^{0,r}=(\bar g^n)^{r}$, $\mathbb{A}_1^{0,r}=\mathbb{A}_1^r$ and $u_0^{0,r}=u_0^{r}$, etc, and decompose $u=u_0^{r}+v+w$, where $v$ and $w$ are as in the proof of Lemma~\ref{lem:psi-bdry-est}. By applying the weak type-(1,1) estimate \eqref{eq:weak-type-2} and Lemma~\ref{lem:partial-DMO-weight-rela}, we get
\begin{align}\label{eq:w-est-2}
    \left(\dashint_{Q_r^+}|w|^p d\mu\right)^{1/p}\le Cr\left(\|\D u\|_{L^\infty(Q_2^+)}\eta_A(2r)+\eta_\bg(2r)\right).
\end{align}
Regarding $v$, recall that it satisfies \eqref{eq:hom-repla-pde} and $V=(\bar A^{n\de})^rD_\de v$. We define the operator $\hat T_{r}$ by
\begin{align}\label{eq:Tv}
    \hat T_{r}v(x):=v(0)+\D_{x'}v(0)\cdot x'-\int_0^{x_n}((\bar A^{nn})^{r}(s))^{-1}\sum_{\de=1}^{n-1}(\bar A^{n\de})^{r}(s)D_\de v(0)\,ds.
\end{align}
Writing $\hat q_{r}:=\hat T_{r}v$, we have for a small constant $\ka\in(0,1/4)$ to be chosen later
$$
\|\D_{x'}(v-\hat q_{r})\|_{L^\infty(Q^+_{\ka r})}=\|\D_{x'}v-\D_{x'}v(0)\|_{L^\infty(Q^+_{\ka r})}\le \ka r[\D_{x'}v]_{C^{1/2,1}(Q_{\ka r}^+)}
$$
and 
$$
\left\|\sum_{\de=1}^n(\bar A^{n\de})^{r}D_\de(v-\hat q_{r})\right\|_{L^\infty(Q^+_{\ka r})}=\|V\|_{L^\infty(Q^+_{\ka r})}\le \ka r[V]_{C^{1/2,1}(Q^+_{\ka r})},
$$
where in the last step we used $V=0$ on $Q'_{r/2}$, which can be inferred from \eqref{eq:V-est}. By combining these estimates and \cite{DonPha23}*{Lemma~4.2 and Proposition~4.4} (see also \cite{BeDo25}) with standard iteration, we get
\begin{align*}
    \|\D(v-\hat q_{r})\|_{L^\infty(Q^+_{\ka r})}&\le C\ka\left(\dashint_{Q_{r/2}^+}|\D v|^pd\mu\right)^{1/p} \le \frac{C\ka}r\left(\dashint_{Q_r^+}|v|^pd\mu\right)^{1/p}.
\end{align*}
In addition, since $\partial_tv$ satisfies \eqref{eq:hom-repla-pde}, we use \cite{DonPha23}*{Lemmas 4.2 and 4.3} (see also \cite{BeDo25}) 
to obtain
$$
\|\partial_tv\|_{L^\infty(Q^+_{r/4})}\le C\left(\dashint_{Q_{r/2}^+}|\partial_tv|^pd\mu\right)^{1/p}\le\frac{C}{r^2}\left(\dashint_{Q_r^+}|v|^pd\mu\right)^{1/p}.
$$
Combining the previous two estimates and the equality $(v-\hat q_{r})(0)=0$ yields
\begin{align}
    \label{eq:hom-repla-poly-est}
    \left(\dashint_{Q^+_{\ka r}}|v-\hat T_{r}v|^pd\mu\right)^{1/p}\le C\ka^2\left(\dashint_{Q_r^+}|v|^pd\mu\right)^{1/p}.
\end{align}
It is easily seen that for any $q\in\mathbb{A}_1^{r}$, $v-q$ satisfies \eqref{eq:hom-repla-pde} and $\hat T_{r}q=q$. This allows us to replace $v$ with $v-q$ in \eqref{eq:hom-repla-poly-est} to have
\begin{align}\label{eq:hom-poly-diff-est}
\left(\dashint_{Q^+_{\ka r}}|v-\hat q_{r}|^pd\mu\right)^{1/p}\le C\ka^2\left(\dashint_{Q_r^+}|v-q|^pd\mu\right)^{1/p}.
\end{align}
Moreover, it is easily seen that
\begin{align*}
    &\left|((\bar A^{nn})^{\ka r})^{-1}(\bar g^n)^{\ka r}-((\bar A^{nn})^{r})^{-1}(\bar g^n)^{r}\right|\\
    &\lesssim |(\bar g^n)^{\ka r}-(\bar g^n)^{r}|+\|g^n\|_\infty|(\bar A^{nn})^{\ka r}-(\bar A^{nn})^{r}|.
\end{align*}
By recalling $\eta=(\|\D u\|_{L^\infty(Q_2^+)}+\|\bg\|_{L^\infty(Q_4^+)})\eta_A+\eta_\bg$ and applying Lemma~\ref{lem:mean-diff-est}, we deduce
\begin{align}\label{eq:u_o-diff-est}
\left(\dashint_{Q_{\ka r}^+}|u_0^{\ka r}-u_0^r|^pd\mu\right)^{1/p}\le \dashint_{Q_{\ka r}^+}|u_0^{\ka r}-u_0^r|d\mu\le \frac{Cr\eta(r)}{\ka^{n+1}}.
\end{align}
Next, similarly to $\hat q_r$, we define
$$
\hat q_{\ka r}(x):=v(0)+\D_{x'}v(0)\cdot x'-\int_0^{x_n}((\bar A^{nn})^{\ka r}(s))^{-1}\sum_{\de=1}^{n-1}(\bar A^{n\de})^{\ka r}(s)D_\de v(0)ds\in\mathbb{A}_1^{\ka r}.
$$
We recall $\hat q_{r}=\hat T_{r}v\in\mathbb{A}_1^{r}$ (see \eqref{eq:Tv}) and apply Lemma~\ref{lem:mean-diff-est} to get
$$
\left(\dashint_{Q_{\ka r}^+}|\hat q_{\ka r}-\hat q_{r}|^pd\mu\right)^{1/p}\le \frac{Cr\eta_A(r)}{\ka^{n+1}}|\D_{x'}v(0)|.
$$
By \cite{DonPha23}*{Proposition~4.4} (see also \cite{BeDo25}), \eqref{eq:w-est} and the equality $v=u-u_0^{r}-w$, we get
\begin{align*}
|\D v(0)|\le C\left(\dashint_{Q_r^+}|\D v|^pd\mu\right)^{1/p}\le C(\|\D u\|_\infty+\|\bg\|_\infty).
\end{align*}
Thus we have
\begin{align}
    \label{eq:q-diff-est}
    \left(\dashint_{Q_{\ka r}^+}|\hat q_{\ka r}-\hat q_{r}|^pd\mu\right)^{1/p}\le\frac{Cr\eta(r)}{\ka^n}.
\end{align}

Now we denote by $C_\ka$ a constant which may vary from line to line but depends only on $\ka$, $n$, $m$, $\al$, $p$, and $\la$. From the equality $u-u_0^{r}=v+w$ and estimates \eqref{eq:w-est-2}, \eqref{eq:hom-poly-diff-est}, \eqref{eq:u_o-diff-est}, and \eqref{eq:q-diff-est}, we obtain that for any $q\in\mathbb{A}_1^{r}$
\begin{align*}
    &\dashint_{Q^+_{\ka r}}|u-u_0^{\ka r}-\hat q_{\ka r}|^pd\mu\le \dashint_{Q_{\ka r}^+}|u-u_0^{r}-\hat q_{r}|^pd\mu+C_\ka (r\eta(r))^p\\
    &\le \dashint_{Q^+_{\ka r}}|v-\hat q_{r}|^pd\mu+\dashint_{Q^+_{\ka r}}|w|^pd\mu+C_\ka (r\eta(r))^p\\
    &\le C\ka^{2p}\dashint_{Q_r^+}|v-q|^pd\mu+\dashint_{Q^+_{\ka r}}|w|^pd\mu+C_\ka (r\eta(r))^p\\
    &\le C\ka^{2p}\dashint_{Q_r^+}|u-u_0^{r}-q|^pd\mu+C_\ka (r\eta(r))^p.
\end{align*}
This implies \eqref{eq:vp-bdry-est}.
\end{proof}

\begin{lemma}
    \label{lem:vp-int-est}
Let $p,\be,\be'$, and $\ka$ be as in Lemm~\ref{lem:vp-bdry-est}. For any $X_0\in Q^+_3$ and $0<r<d/4$, it holds
    $$
    \vp(X_0,\ka r)\le \ka^{1+\be'}\vp(X_0,r)+Cr\eta(r).
    $$
\end{lemma}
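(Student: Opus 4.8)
The plan is to mirror the structure of Lemma~\ref{lem:vp-bdry-est}, but exploiting that for $0<r<d/4$ the cylinder $Q_r(X_0)$ stays away from $\{x_n=0\}$, so that $u$ solves a \emph{uniformly} parabolic system there (with weight comparable to a constant). First I would freeze the coefficients by replacing $A^{\g\de}$ with the $X'$-averages $(\bar A^{\g\de})^{X_0,r}$ and $\bg$ with $\bar\bg^{X_0,r}$, and write $u = u_0^{X_0,r} + v + w$, where $w$ is the solution to the corresponding uniformly parabolic system on a slightly larger cylinder with right-hand side $\div\bigl(((\bar A-A)D u + \bg-\bar\bg)\chi\bigr)$ (and no boundary term, since we are in the interior), and $v := u - u_0^{X_0,r} - w$ solves the homogeneous constant-coefficient system $D_\g((\bar A^{\g\de})^{X_0,r}D_\de v) - a_0\partial_t v = 0$ on $Q_r(X_0)$, with $a_0(x_n)=(x_n/d)^\al \in [c, c^{-1}]$.

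Next I would estimate each piece. For $w$, the interior analogue of the weak type-(1,1) estimate — i.e.\ \eqref{eq:weak-type-2} applied to a uniformly parabolic system, or directly the interior $L^p$-averaged bound used to obtain \eqref{eq:w-est-2} — gives
$\bigl(\dashint_{Q_r(X_0)}|w|^p d\mu\bigr)^{1/p} \le Cr\bigl(\|\D u\|_\infty \eta_A(2r) + \eta_\bg(2r)\bigr) \le Cr\eta(r)$.
For $v$, since $\D_{x'}v$, $\partial_t v$, and also (via the equation) the conormal combination $\sum_{\de}(\bar A^{n\de})^{X_0,r}D_\de v$ all satisfy the same homogeneous system, I would invoke the interior Schauder/Liouville-type estimates (the parabolic analogue of \cite{DonPha23}*{Lemma~4.2, Proposition~4.4}, valid with the constant-coefficient weight $a_0$, as noted in the proof of Lemma~\ref{lem:psi-int-est}) together with the standard iteration to upgrade to $L^p$ averages, obtaining the second-order decay
$\bigl(\dashint_{Q_{\ka r}(X_0)}|v - \hat T_r v|^p d\mu\bigr)^{1/p} \le C\ka^2 \bigl(\dashint_{Q_r(X_0)}|v|^p d\mu\bigr)^{1/p}$,
where $\hat T_r v$ is the affine-in-$x'$ polynomial in $\mathbb{A}_2^{X_0,r}$ obtained by Taylor-expanding $v$ (now including the $\ell_n$-term, since in the interior $D_n v(X_0)$ need not vanish). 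Because $\hat T_r q = q$ for every $q\in\mathbb{A}_2^{X_0,r}$, I may replace $v$ by $v-q$.

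The remaining terms are the corrections from changing the averaging scale from $r$ to $\ka r$: the difference $u_0^{X_0,\ka r} - u_0^{X_0,r}$ is controlled by Lemma~\ref{lem:mean-diff-est} (the $\int_d^{x_n}$-versions there are exactly what appears in the definitions of $u_0^{X_0,r}$ and of the polynomials), giving a bound $\le C_\ka r\eta(r)$; likewise the difference $\hat q_{\ka r} - \hat q_r$ of the two polynomials involves $(\bar A^{\g\de})^{X_0,\ka r}-(\bar A^{\g\de})^{X_0,r}$ integrated in $x_n$, again handled by Lemma~\ref{lem:mean-diff-est}, multiplied by $|\D v(X_0)| \le C(\|\D u\|_\infty + \|\bg\|_\infty)$ from the interior gradient estimate. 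Assembling: for any $q\in\mathbb{A}_2^{X_0,r}$,
$$
\dashint_{Q_{\ka r}(X_0)}|u - u_0^{X_0,\ka r} - \hat q_{\ka r}|^p d\mu \le C\ka^{2p}\dashint_{Q_r(X_0)}|u - u_0^{X_0,r} - q|^p d\mu + C_\ka (r\eta(r))^p,
$$
and taking the infimum over $q$, then choosing $\ka$ small so that $C\ka^{2} \le \ka^{1+\be'}$ (possible since $1+\be' < 2$), yields $\vp(X_0,\ka r) \le \ka^{1+\be'}\vp(X_0,r) + Cr\eta(r)$. The main obstacle I anticipate is bookkeeping the $\mathbb{A}_2^{X_0,r}$ polynomials correctly across the scale change — in particular verifying that $\hat T_r$ maps into $\mathbb{A}_2^{X_0,r}$ and fixes it, and that the extra $\ell_n$-term (absent in the boundary lemma) is compatible with the conormal-type estimate on $V$; everything else is a routine adaptation of Lemma~\ref{lem:vp-bdry-est} with the weight replaced by the harmless factor $a_0$.
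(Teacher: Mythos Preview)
Your proposal is correct and follows essentially the same route as the paper: the paper's proof simply says to repeat the argument of Lemma~\ref{lem:vp-bdry-est} with straightforward modifications, flagging exactly the point you anticipated---namely that the estimate \eqref{eq:hom-poly-diff-est} (which in the boundary case used $V=0$ on $Q'_{r/2}$) must be replaced by its interior counterpart, which the paper sources from \cite{DonXu21}*{(5.10)} using that $u$ solves the uniformly parabolic system \eqref{eq:unif-elliptic} on $Q_r(X_0)$. One small remark: since the statement fixes $\ka$ to be the constant from Lemma~\ref{lem:vp-bdry-est}, you should not ``choose'' $\ka$ anew here but rather note that the universal constant $C$ in your $C\ka^2$ bound is of the same type as in the boundary case, so the same $\ka$ (taken small enough once for both lemmas) works.
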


\begin{proof}
For the proof of this interior case, we can follow the argument in the boundary case, Lemma~\ref{lem:vp-bdry-est}, with straight modifications. The only nontrivial part is the estimate \eqref{eq:hom-poly-diff-est} which relies on the fact that $V=0$ on $Q'_{r/2}$. Its interior counterpart can be inferred from \cite{DonXu21}*{(5.10)}, since the condition $r<d/4$ implies that $u$ solves the uniformly parabolic equation \eqref{eq:unif-elliptic} in $Q_r(X_0)$.
\end{proof}

To derive the decay estimate of $\vp$ by combining the previous two lemmas, we need the following result.

\begin{lemma}
    \label{lem:l-i-bound}
Let $\sigma:[0,1]\to[0,\infty)$ be a Dini function satisfying \eqref{eq:alm-mon} and $\sigma\ge\eta$, where $\eta$ is as in \eqref{eq:Dini-tilde}. Suppose that for some point $X_0\in \overline{Q_3^+}$,
\begin{align}\label{eq:vp-bound-assump}
\vp(X_0,r)\le r\sigma(r),\quad 0<r<1/8.
\end{align}
Then there is a constant $C>0$, depending only on $n,m,\la,\al,p,$ such that for every $0<r<1/8$,
\begin{align}
    \label{eq:u-poly-grad-est}
    \left(\dashint_{Q_r^+(X_0)}|\D(u-u_0^{X_0,r}-q^{X_0,r})|^pd\mu\right)^{1/p}\le C\sigma(r)
\end{align}
and  for every $1\le\delta\le n-1$,
\begin{align}\label{eq:l_i-est}
    |\ell_\de^{X_0,r}-\ell_\de^{X_0,2r}|\le C\sigma(r)\quad\text{and}\quad|\ell_\de^{X_0,r}|\le C\left(\|\D u\|_\infty+\|\bg\|_\infty+\int_0^1\frac{\sigma(\rho)}\rho d\rho\right).
\end{align}
\end{lemma}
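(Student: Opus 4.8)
The plan is to follow the familiar Campanato-type bookkeeping: once we know $\vp(X_0,r)\le r\sigma(r)$ with $\sigma$ almost monotone and Dini, we first upgrade this $L^p$-oscillation bound to a gradient bound, then read off the comparison of the affine coefficients $\ell_\de^{X_0,r}$ across dyadic scales, and finally sum the telescoping series to bound $|\ell_\de^{X_0,r}|$ itself. Throughout I would split into the boundary regime $r\ge d/4$ and the interior regime $r<d/4$, using in the latter that $u$ solves the uniformly parabolic equation \eqref{eq:unif-elliptic} in $Q_r(X_0)$.

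For \eqref{eq:u-poly-grad-est}: writing $u=u_0^{X_0,r}+v+w$ with $v,w$ as in the proof of Lemma~\ref{lem:vp-bdry-est} (resp.\ its interior analogue), the $w$-part is controlled by the weak type-$(1,1)$ estimate \eqref{eq:weak-type-1}, giving $(\dashint_{Q_r^+(X_0)}|\D w|^pd\mu)^{1/p}\lesssim \eta(r)\le\sigma(r)$. For the $v$-part, $v-q^{X_0,r}$ satisfies the homogeneous system \eqref{eq:hom-repla-pde}, so interior/boundary gradient estimates from \cite{DonPha23}*{Lemma~4.2 and Proposition~4.4} (combined with the standard iteration lowering the exponent from $2$ to $p$) yield $\|\D(v-q^{X_0,r})\|_{L^\infty(Q_{r/2}^+(X_0))}\lesssim r^{-1}(\dashint_{Q_r^+(X_0)}|v-q^{X_0,r}|^pd\mu)^{1/p}\le r^{-1}\vp(X_0,r)\le\sigma(r)$ after also absorbing the $u_0$- and $w$-contributions to $\vp$ using \eqref{eq:q-phi-eqn}; and $V=\sum_\de(\bar A^{n\de})^{X_0,r}D_\de(v-q^{X_0,r})$ is handled as in \eqref{eq:V-est}. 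Adding the $w$- and $v$-estimates and using $\D u_0^{X_0,r}$ absorbs into $q^{X_0,r}$ gives \eqref{eq:u-poly-grad-est}.

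For the first estimate in \eqref{eq:l_i-est}: both $q^{X_0,r}$ and $q^{X_0,2r}$ are affine in $x'$ with $x'$-gradients $(\ell_\de^{X_0,r})$ and $(\ell_\de^{X_0,2r})$ (plus an $x_n$-dependent correction that is the same type of integral expression in both, whose difference is controlled by Lemma~\ref{lem:mean-diff-est}). Thus $|\ell_\de^{X_0,r}-\ell_\de^{X_0,2r}|$ is, up to constants, the $L^p(Q_r^+(X_0),d\mu)$-size of $\D_{x'}(q^{X_0,r}-q^{X_0,2r})$, which by the triangle inequality is bounded by $(\dashint_{Q_r^+(X_0)}|\D u-\D q^{X_0,r}|^pd\mu)^{1/p}+(\dashint_{Q_r^+(X_0)}|\D u-\D q^{X_0,2r}|^pd\mu)^{1/p}$ plus the $u_0^{X_0,r}-u_0^{X_0,2r}$ difference. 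The first term is $\lesssim\sigma(r)$ by \eqref{eq:u-poly-grad-est}; the second is $\lesssim\sigma(2r)\lesssim\sigma(r)$ since $Q_r^+(X_0)\subset Q_{2r}^+(X_0)$, $\dashint_{Q_r^+}\lesssim\dashint_{Q_{2r}^+}$ (comparable weighted measures up to a factor depending only on $n,\al$), and $\sigma$ satisfies \eqref{eq:alm-mon}; the $u_0$-difference is $\lesssim r\eta(r)\le\sigma(r)$ by Lemma~\ref{lem:mean-diff-est}. For the second estimate, telescoping from scale $r$ up to scale $\sim 1$, $|\ell_\de^{X_0,r}|\le|\ell_\de^{X_0,2^{-N}}|+\sum_{k}|\ell_\de^{X_0,2^{-k-1}}-\ell_\de^{X_0,2^{-k}}|$ where $2^{-N}\sim 1$; the sum is $\lesssim\sum_k\sigma(2^{-k})\lesssim\int_0^1\frac{\sigma(\rho)}{\rho}d\rho$ by almost monotonicity, and the base term $|\ell_\de^{X_0,2^{-N}}|$ is controlled by $(\dashint_{Q_{2^{-N}}^+(X_0)}|\D u-\D q^{X_0,2^{-N}}|^pd\mu)^{1/p}+\|\D u\|_\infty+\|\bg\|_\infty\lesssim\sigma(2^{-N})+\|\D u\|_\infty+\|\bg\|_\infty$.

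The main obstacle is the careful handling of the comparison of the auxiliary functions $u_0^{X_0,r}$, $u_0^{X_0,2r}$ and the $x_n$-dependent tails of $q^{X_0,r}$, $q^{X_0,2r}$ across scales in the degenerate weighted setting—this is exactly what Lemma~\ref{lem:mean-diff-est} is designed to absorb—and making sure the iteration/telescoping is uniform in the boundary distance $d$, including the borderline transition $r\approx d/4$ between the two regimes. Everything else is the standard Campanato iteration machinery already used in \cite{DonJeoVit23} and \cite{DonXu21}, so I would only sketch those parts and refer to the cited arguments.
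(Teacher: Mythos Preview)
Your proposal is correct and follows essentially the same line as the paper: decompose $u-u_0^{X_0,r}-q^{X_0,r}$ into a homogeneous part $v^{X_0,r}$ and an inhomogeneous part $w^{X_0,r}$, control $\D w^{X_0,r}$ via the weak type-$(1,1)$ estimate, control $\D v^{X_0,r}$ via a Caccioppoli/gradient estimate for the homogeneous system together with the hypothesis $\vp(X_0,r)\le r\sigma(r)$, and then telescope over dyadic scales.

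The only genuine (minor) difference is in how you extract $|\ell_\de^{X_0,r}-\ell_\de^{X_0,2r}|$. The paper first bounds $\left(\dashint_{Q_r^+(X_0)}|q^{X_0,r}-q^{X_0,2r}|^p\,d\mu\right)^{1/p}\lesssim r\sigma(r)$ directly from the $\vp$-hypothesis and Lemma~\ref{lem:mean-diff-est}, and then isolates each $\ell_\de$ by a reflection trick (comparing $q^{X_0,r}-q^{X_0,2r}$ at $x$ and at the point with $x_\de$ replaced by $-x_\de$). You instead use that $D_\de q^{X_0,r}=\ell_\de^{X_0,r}$ is constant for $\de\le n-1$, so the difference is read off immediately from the gradient estimate \eqref{eq:u-poly-grad-est} applied at scales $r$ and $2r$; note that since $u_0^{X_0,r}$ depends only on $x_n$, the $u_0$-difference term you mention is actually not needed for the tangential coefficients. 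Your route is slightly more direct here, while the paper's reflection argument has the advantage of not invoking \eqref{eq:u-poly-grad-est} a second time. Both are standard Campanato bookkeeping and yield the same conclusion.
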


\begin{proof}
We observe that $u^{X_0,r}:=u-u_0^{X_0,r}-q^{X_0,r}$ satisfies in $Q^+_{2r}(X_0)$
$$
D_\g(x_n^\al(\bar A^{\g\de})^{X_0,r}D_\de u^{X_0,r})-x_n^\al\partial_tu^{X_0,r}=D_\g(x_n^\al((\bar A^{\g\de})^{X_0,r}-A^{\g\de})D_\de u+g^\g-(\bar g^\g)^{X_0,r}))
$$
with the conormal boundary condition
$$
\lim_{x_n\to0+}x_n^\al((\bar A^{n\de})^{X_0,r}D_\de u^{X_0,r}+(A^{n\de}-(\bar A^{n\de})^{X_0,r})D_\de u+(\bar g^\g)^{X_0,r}-g^\g)=0\quad\text{on }Q'_{2r}(X'_0)
$$
if $Q_{2r}(X_0)\cap Q'_4$ is nonempty. We then decompose $u^{X_0,r}=w^{X_0,r}+v^{X_0,r}$ in a similar way as in the proof of Lemma~\ref{lem:psi-bdry-est}. Regarding $w^{X_0,r}$, by applying the weak type-(1,1) estimates in Lemma~\ref{lem:weak-type-(1,1)}, we get
\begin{align}
    \label{eq:w-X_0-est-1}\left(\dashint_{Q^+_{r}(X_0)}| w^{X_0,r}|^pd\mu\right)^{1/p}\le Cr\eta(r),\quad \left(\dashint_{Q^+_{r}(X_0)}|\D w^{X_0,r}|^pd\mu\right)^{1/p}\le C\eta(r).
\end{align}
In addition, by using \cite{DonPha23}*{Lemma~4.2} (see also \cite{BeDo25}) with iteration, we obtain
\begin{align*}
    \left(\dashint_{Q^+_{r}(X_0)}|\D v^{X_0,r}|^pd\mu\right)^{1/p}&\lesssim \frac{1}{r}\left(\dashint_{Q^+_{\frac32r}(X_0)}|v^{X_0,r}|^pd\mu\right)^{1/p}\lesssim \sigma(r),
\end{align*}
where we used \eqref{eq:vp-bound-assump} and \eqref{eq:w-X_0-est-1} with the identity $v^{X_0,r}=u^{X_0,r}-w^{X_0,r}$ in the second step. This, along with \eqref{eq:w-X_0-est-1}, gives \eqref{eq:u-poly-grad-est}.

Moreover, by applying Lemma~\ref{lem:mean-diff-est}, we get
$$
\left(\dashint_{Q_r^+(X_0)}|u_0^{X_0,r}-u_0^{X_0,2r}|^pd\mu\right)^{1/p}\lesssim r\eta(r).
$$
This, together with \eqref{eq:vp-bound-assump} and the assumption $\sigma\ge\eta$, yields
\begin{align}\label{eq:q-l-est-1}\begin{split}
    &\left(\dashint_{Q_r^+(X_0)}|q^{X_0,r}-q^{X_0,2r}|^pd\mu\right)^{1/p}\\
    &\lesssim \left(\dashint_{Q_r^+(X_0)}|u-u_0^{X_0,r}-q^{X_0,r}|^pd\mu\right)^{1/p}+\left(\dashint_{Q_r^+(X_0)}|u-u_0^{X_0,2r}-q^{X_0,2r}|^pd\mu\right)^{1/p}\\
    &\qquad+\left(\dashint_{Q_r^+(X_0)}|u_0^{X_0,r}-u_0^{X_0,2r}|^pd\mu\right)^{1/p}\\
    &\lesssim \vp(X_0,r)+\vp(X_0,2r)+r\eta(r)\lesssim r\sigma(r).
\end{split}\end{align}
We assume without loss of generality $X_0'=0$ and observe that
$$
2(\ell_1^{X_0,r}-\ell_1^{X_0,2r})x_1=(q^{X_0,r}-q^{X_0,2r})(x)-(q^{X_0,r}-q^{X_0,2r})(-x_1,x_2,\ldots,x_n),
$$
which combined with \eqref{eq:q-l-est-1} produces
\begin{align*}
    r|\ell_1^{X_0,r}-\ell_1^{X_0,2r}|\lesssim \left(\dashint_{Q_r^+(X_0)}|q^{X_0,r}-q^{X_0,2r}|^pd\mu\right)^{1/p}\lesssim r\sigma(r).
\end{align*}
By repeating the above process for $2\le \de\le n-1$, we get
\begin{align*}
    |\ell_\de^{X_0,r}-\ell_\de^{X_0,2r}|\lesssim \sigma(r),\quad 1\le \de\le n-1.
\end{align*}

Finally, we take $k\in\mathbb{N}$ satisfying $1/8\le 2^kr<1/4$. Then we have by using the above estimate and \eqref{eq:u-poly-grad-est} 
\begin{equation*}
    |\ell_\delta^{X_0,r}|\le \sum_{i=0}^{k-1}|\ell_\delta^{X_0,2^ir}-\ell_\delta^{X_0,2^{i+1}r}|+|\ell_\delta^{X_0,2^kr}|\lesssim \int_0^1\frac{\sigma(\rho)}\rho d\rho+\|\D u\|_\infty+\|\bg\|_\infty.\qedhere
\end{equation*}
\end{proof}

Next, we combine Lemmas~\ref{lem:vp-bdry-est} - \ref{lem:l-i-bound} to estimate $\vp$.
\begin{lemma}
    \label{lem:vp-est}
    Let $\ka\in(0,1/4)$ be as in Lemma~\ref{lem:vp-bdry-est}. If $X_0\in Q_3^+$ and $0<r<\ka/20$, then
    $$
    \vp(X_0,r)\le Cr\hat\eta(r).
    $$
\end{lemma}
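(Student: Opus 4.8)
The plan is to prove Lemma~\ref{lem:vp-est} by combining the two one-step decay estimates (Lemma~\ref{lem:vp-bdry-est} for the boundary case and Lemma~\ref{lem:vp-int-est} for the interior case) into a single growth/iteration scheme, exactly in the spirit of the corresponding argument in \cite{DonJeoVit23} and \cite{Don12}. First I would fix $X_0\in Q_3^+$ and split into two regimes according to whether $d=d_{X_0}$ is comparable to the scale under consideration. For scales $r$ with $d/4<r<\ka/20$ one applies the boundary estimate \eqref{eq:vp-bdry-est}; for scales $0<r<d/4$ one applies the interior estimate from Lemma~\ref{lem:vp-int-est}. Since both inequalities have the identical form
$$
\vp(X_0,\ka r)\le \ka^{1+\be'}\vp(X_0,r)+Cr\eta(r),
$$
one can iterate uniformly through both regimes once the transition between them is handled; the point $r\approx d$ is the only place where the definitions of $\mathbb{A}_1$ and $\mathbb{A}_2$ and of $\vp$ change, and there one needs to compare the two definitions on a cylinder of size $\approx d$, which costs only a constant factor (using that $Q_r^+(X_0)$ and $Q_r(X_0)$ coincide when $r\le d$ and a bounded overlap argument when $r\approx d$).

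The core of the proof is then a standard iteration lemma: from $\vp(X_0,\ka r)\le\ka^{1+\be'}\vp(X_0,r)+Cr\eta(r)$, valid for all $0<r<\ka/20$, one deduces by summing a geometric series (splitting off the $\ka^{\be' i}$-weighted tail of $\eta$ exactly as in the definition \eqref{eq:tilde-eta}) that
$$
\vp(X_0,r)\le C(r/R)^{1+\be'}\vp(X_0,R)+Cr\,\tilde\eta(r)
$$
for $0<r<R<\ka/20$. Taking $R\uparrow\ka/20$ and bounding $\vp(X_0,R)$ crudely by $CR\big(\|\D u\|_{L^\infty(Q_2^+)}+\|\bg\|_\infty\big)$ — which follows from the Lipschitz bound in Theorem~\ref{thm:DMO-reg}, i.e. Lemma~\ref{lem:gradient-unif-est}, since $u-u_0^{X_0,R}-q^{X_0,R}$ vanishes to first order and $|u|,|u_0^{X_0,R}|,|q^{X_0,R}|\lesssim R\|\D u\|_\infty$ — gives $\vp(X_0,r)\le Cr\,\hat\eta(r)$ after absorbing the extra geometric factor into $\hat\eta$ via \eqref{eq:hat-eta}. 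I would also invoke Lemma~\ref{lem:l-i-bound} (with $\sigma=\hat\eta$, whose hypotheses $\sigma\ge\eta$ and \eqref{eq:alm-mon} hold by construction) to control the linear coefficients $\ell_\de^{X_0,r}$, which is needed to make the comparison $q^{X_0,\ka r}$ versus $q^{X_0,r}$ across scales quantitative.

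The main obstacle I expect is the bookkeeping at the transition scale $r\approx d$ and, relatedly, making sure the constant $\ka$ from Lemma~\ref{lem:vp-bdry-est} and the (possibly different) $\ka$ implicit in Lemma~\ref{lem:vp-int-est} can be taken to be the same — here one simply shrinks $\ka$ to the smaller of the two and re-derives both inequalities, which is harmless. A secondary technical point is that the approximating affine family $\mathbb{A}_2^{X_0,r}$ carries the factor $(d/s)^\al$, so when passing from the interior regime ($r<d/4$, weight essentially constant) to the boundary regime one must check that the optimal $q^{X_0,r}$ stays in a bounded family; this is exactly what Lemma~\ref{lem:l-i-bound} provides. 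Once these are in place, the iteration is routine and the restriction $0<r<\ka/20$ is just what is needed so that every cylinder $Q_{\ka^{-i}r}^+(X_0)$ appearing in the sum stays inside $Q_{1/4}^+$ where Lemmas~\ref{lem:vp-bdry-est} and \ref{lem:vp-int-est} apply. This mirrors \cite{DonJeoVit23}*{Lemma~2.7}, so I would present the argument concisely and refer there for the geometric-series details.
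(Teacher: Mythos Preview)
Your iteration scheme has a genuine gap at the center of the argument. Lemma~\ref{lem:vp-bdry-est} is stated and proved only for $\bar X_0\in Q_3'$, i.e.\ for a point \emph{on} the characteristic boundary $\{x_n=0\}$. It does not give you a one-step inequality of the form $\vp(X_0,\ka r)\le\ka^{1+\be'}\vp(X_0,r)+Cr\eta(r)$ when $X_0\in Q_3^+$ and $d/4<r<\ka/20$; the proof of that lemma uses that $V=0$ on $Q_r'$ and the conormal structure at $x_n=0$ in an essential way. So you cannot ``iterate uniformly through both regimes'' at the single center $X_0$ as you propose. Relatedly, your invocation of Lemma~\ref{lem:l-i-bound} with $\sigma=\hat\eta$ at $X_0$ is circular: that lemma takes the bound $\vp(X_0,r)\le r\sigma(r)$ as a \emph{hypothesis}, which is precisely the conclusion you are trying to establish.

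The paper resolves this by working at the boundary projection $\bar X_0=(X_0',0)$ first. One iterates Lemma~\ref{lem:vp-bdry-est} at $\bar X_0$ up to scale $\sim 1/4$ (bounding the top-scale term by $\|u\|_{1,\mu}+\|\bg\|_\infty$, not by $R\|\D u\|_\infty$ as you write; the extra $R^{\be'}$ comes from the iteration factor and is absorbed into $\tilde\eta$) to get $\vp(\bar X_0,R)\lesssim R\tilde\eta(R)$ for all $0<R<1/4$. With this in hand, Lemma~\ref{lem:l-i-bound} is applied at $\bar X_0$ (not at $X_0$, so no circularity) to bound the coefficients $\ell_\de^{\bar X_0,R}$. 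Then for $r\ge\ka d/4$ one compares $\vp(X_0,r)$ to $\vp(\bar X_0,R)$ with $R=(1+4/\ka)r$ by inserting into the infimum defining $\vp(X_0,r)$ a competitor built from the $\ell_\de^{\bar X_0,R}$ and controlling the resulting errors with Lemma~\ref{lem:mean-diff-est}; this recenter step is more than bounded overlap. Finally, for $r<\ka d/4$ one iterates Lemma~\ref{lem:vp-int-est} at $X_0$ up to scale $\sim d$ and plugs in the previous case, using that $\rho\mapsto\rho^{-\be'}\hat\eta(\rho)$ is nonincreasing. Your outline would become correct if you replaced the nonexistent boundary-regime iteration at $X_0$ by this recenter-then-transfer argument.
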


\begin{proof}
Without loss of generality, we may assume $X'_0=0$, i.e., $X_0=(0,d)$. We write for simplicity $q^r=q^{0,r}$, $(\bar g^n)^r=(\bar g^n)^{0,r}$, $\ell_\de^{r}=\ell_\de^{0,r}$ and $(\bar A^{n\de})^r=(\bar A^{n\de})^{0,r}$ for $1\le \de\le n$. 

We first claim that for any $0<R<1/4$
\begin{align}
    \label{eq:vp-bdry-est-2}
    \vp(0,R)\lesssim R\tilde\eta(R).
\end{align}
Indeed, we take a nonnegative integer $j\ge0$ such that $\ka^{-j}R\le1/4<\ka^{-(j+1)}R$. From \eqref{eq:vp-bdry-est} with iteration, we find
\begin{align*}
    \vp(0,R)&\le \ka^{(1+\be')j}\vp(0,\ka^{-j}R)+C\sum_{i=1}^j\ka^{(1+\be')(i-1)}\ka^{-i}R\eta(\ka^{-i}R)\\
    &\lesssim R^{1+\be'}\left(\dashint_{Q_{\ka^{-j}R}}|u-u_0^{X_0,r}|^pd\mu\right)^{1/p}+R\sum_{i=1}^j\ka^{\be' j}\eta(\ka^{-i}R)\\
    &\lesssim (\|u\|_{1,\mu}+\|\bg\|_\infty)R^{1+\be'}+R\tilde\eta(R)\\
    &\lesssim R\tilde\eta(R).
\end{align*}
We then consider two cases either $r\ge \ka d/4$ or $0<r<\ka d/4$.

\medskip\noindent\emph{Case 1.} Suppose $r\ge\ka d/4$. Then, for $R:=(1+4/\ka)r$, \eqref{eq:vp-bdry-est-2} gives
$$
\vp(0,R)\lesssim R\tilde\eta(R)\lesssim r\tilde\eta(r).
$$
Thus we have by Lemmas~\ref{lem:gradient-unif-est} and \ref{lem:l-i-bound} that
\begin{align}
    \label{eq:l-est}
    |\ell_\delta^{5\rho}|\lesssim \|u\|_{1,\mu}+\|\bg\|_\infty+\int_0^1\frac{\eta_\bg(\rho)}\rho d\rho,\quad 1\le\delta\le n-1.
\end{align}
In addition, by the minimality of $\vp$ and the identities $(\bar g^n)^{X_0,r}=(\bar g^n)^r$ and $(\bar A^{n\de})^{X_0,r}=(\bar A^{n\de})^r$, we get
\begin{align*}
    &\vp(X_0,r)\\
    &\le \left(\dashint_{Q^+_r(X_0)}\left|u-\ell_0^{R}-\sum_{\de=1}^{n-1}\ell_\de^{R}x_\de-\int_0^{x_n}((\bar A^{nn})^{r})^{-1}\left((\bar g^n)^{r}+\sum_{\de=1}^{n-1}(\bar A^{n\de})^{r}\ell_\de^{R}\right)\right|^pd\mu\right)^{1/p}\\
    &\lesssim \left(\dashint_{Q^+_r(X_0)}\left|u-\ell_0^{R}-\sum_{\de=1}^{n-1}\ell_\de^{R}x_\de-\int_0^{x_n}((\bar A^{nn})^{R})^{-1}\left((\bar g^n)^{R}+\sum_{\de=1}^{n-1}(\bar A^{n\de})^{R}\ell_\de^{R}\right)\right|^pd\mu\right)^{1/p}\\
    &\qquad+\dashint_{Q^+_r(X_0)}\left(\int_0^{x_n}((\bar A^{nn})^{r})^{-1}\left|(\bar g^n)^{r}-(\bar g^n)^{R}+\sum_{\de=1}^{n-1}((\bar A^{n\de})^{r}-(\bar A^{n\de})^{R})\ell_\de^{R}\right|\right)d\mu\\
    &\qquad+\dashint_{Q^+_r(X_0)}\left(\int_0^{x_n}\left|((\bar A^{nn})^{r})^{-1}-((\bar A^{nn})^{R})^{-1}\right|\left|(\bar g^n)^{R}+\sum_{\de=1}^{n-1}(\bar A^{n\de})^{R}\ell_\de^{R}\right|\right)d\mu\\
    &\lesssim \vp(0,R)+r\hat\eta(r)\lesssim r\hat\eta(r),
\end{align*}
where we used Lemma~\ref{lem:mean-diff-est}, \eqref{eq:vp-bdry-est-2}, and \eqref{eq:l-est} in the last step.

\medskip\noindent\emph{Case 2.} Suppose $0<r<\ka d/4$. Take $j\in \mathbb{N}$ such that $\ka d/4\le \ka^{-j}r<d/4$. By applying Lemma~\ref{lem:vp-int-est} with iteration, we have
\begin{align*}
    \vp(X_0,r)&\lesssim \ka^{(1+\be')j}\vp(X_0,\ka^{-j}r)+r\tilde\eta(r)\lesssim (r/d)^{1+\be'}\vp(X_0,\ka^{-j}r)+r\tilde\eta(r).
\end{align*}
By using the result in Case 1 and the fact that $\rho\longmapsto \rho^{-\be'}\hat\eta(\rho)$ is nonincreasing, we obtain
\begin{equation*}
    \vp(X_0,r)\lesssim (r/d)^{1+\be'}d\hat\eta(d)+r\tilde\eta(r)\lesssim r\hat\eta(r).\qedhere
\end{equation*}
\end{proof}

Now, we are ready to prove Theorem~\ref{thm:sol-time-est}.

\begin{proof}[Proof of Theorem~\ref{thm:sol-time-est}]
We split the proof into three steps.

\medskip\noindent\emph{Step 1.} Recall that $U=A^{n\de}D_\de u-g^n$. The aim of this step is to show that
\begin{align}
        &\left(\dashint_{Q_r^+(X_0)}|D_\de u-\ell_\de^{X_0,r}|^pd\mu\right)^{1/p}\le C\hat\eta(r),\quad 1\le \de\le n-1,\label{eq:u-poly-approx-est-1}\\
        &\left(\dashint_{Q^+_r(X_0)}|U-(d/x_n)^\al\ell_n^{X_0,r}|^pd\mu\right)^{1/p}\le C\hat\eta(r).\label{eq:u-poly-approx-est-2}
\end{align}
Indeed, since $D_\de(u-u_0^{X_0,r}-q^{X_0,r})=D_\de u-\ell_\de^{X_0,r}$ for $1\le \de\le n-1$, \eqref{eq:u-poly-approx-est-1} follows from \eqref{eq:u-poly-grad-est} in Lemma~\ref{lem:l-i-bound}, which is applicable due to Lemma~\ref{lem:vp-est}. Regarding \eqref{eq:u-poly-approx-est-2}, when $r\ge d/4$, we have $(d/x_n)^\al\ell_n^{X_0,r}=0$ and thus it simply follows from Lemma~\ref{lem:psi-est}. On the other hand, when $0<r< d/4$, we observe
\begin{align*}
    &\sum_{\de=1}^n(\bar A^{n\de})^{X_0,r}D_\de(u-u_0^{X_0,r}-q^{X_0,r})\\
    &=(U-(d/x_n)^\al\ell_n^{X_0,r})+(g^n-(\bar g^n)^{X_0,r})+((\bar A^{n\de})^{X_0,r}-A^{n\de})D_\de u,
\end{align*}
which combined with \eqref{eq:u-poly-grad-est} implies \eqref{eq:u-poly-approx-est-2}.

\medskip\noindent\emph{Step 2.} The purpose of this step is to prove
\begin{align}
    \label{eq:u-l_0-diff-est}
    |u(X_0)-\ell_0^{X_0,r}|\le Cr\bar\eta(r),
\end{align}
where $\bar\eta$ is as in \eqref{eq:Dini-tilde}. Since $\ell_n^{X_0,r}=0$ when $r\ge d/4$ and $4/5<d/x_n< 4/3$ when $r< d/4$, we have by using \eqref{eq:u-poly-approx-est-2}
\begin{align*}
    |\ell_n^{X_0,r}-\ell_n^{X_0,2r}|&\le C\left(\dashint_{Q_r^+(X_0)}|U-(d/x_n)^\al\ell_n^{X_0,r}|^p+|U-(d/x_n)^\al\ell_n^{X_0,2r}|^p d\mu\right)^{1/p}\\
    &\le C\hat\eta(r).
\end{align*}
We then follow the argument at the end of the proof of Lemma~\ref{lem:l-i-bound} to get $|\ell_n^{X_0,r}|\le C\int_0^1\frac{\hat\eta(\rho)}\rho d\rho$. These, along with Lemma~\ref{lem:l-i-bound}, give that for any $1\le \delta\le n$
\begin{align}\label{eq:l-bound}
|\ell_\de^{X_0,r}|\le C\left(\|\D u\|_\infty+\int_0^1\frac{\hat\eta(\rho)}\rho d\rho\right) \quad\text{and}\quad |\ell_\de^{X_0,r}-\ell_\de^{X_0,2r}|\le C\hat\eta(r).
\end{align}
Moreover, from the definition of $q^{X_0,r}$ and $q^{X_0,2r}$, we find
\begin{align*}
    &\ell_0^{X_0,r}-\ell_0^{X_0,2r}\\&=(q^{X_0,r}(x)-q^{X_0,2r}(x))-\sum_{\de=1}^{n-1}(\ell_\de^{X_0,r}-\ell_\de^{X_0,2r})(x_\de-(x_0)_\de)\\
    &\qquad-\int_{d}^{x_n}\Bigg(((\bar A^{nn})^{X_0,r})^{-1}\left((d/s)^\al\ell_n^{X_0,r}-\sum_{\de=1}^{n-1}(\bar A^{n\de})^{X_0,r}\ell_\de^{X_0,r}\right)\\
    &\qquad\qquad-((\bar A^{nn})^{X_0,2r})^{-1}\left((d/s)^\al\ell_n^{X_0,2r}-\sum_{\de=1}^{n-1}(\bar A^{n\de})^{X_0,2r}\ell_\de^{X_0,2r}\right)\Bigg)ds.
\end{align*}
By using Lemma~\ref{lem:mean-diff-est}, \eqref{eq:q-l-est-1}, \eqref{eq:l-bound}, and arguing as before, we get
\begin{align}
    \label{eq:l_0-diff-est}
    |\ell_0^{X_0,r}-\ell_0^{X_0,2r}|\le Cr\hat\eta(r).
\end{align}
In addition, since $(u-u_0^{X_0,r}-q^{X_0,r})(X_0)=u(X_0)-\ell_0^{X_0,r}$, we infer from Lemma~\ref{lem:vp-est} that $\ell_0^{X_0,r}\to u(X_0)$ as $r\to0$. This, together with \eqref{eq:l_0-diff-est}, gives \eqref{eq:u-l_0-diff-est}.

\medskip\noindent\emph{Step 3.} In this step, we derive \eqref{eq:sol-time-est}. It is sufficient to show that 
\begin{align}
    |u(t_0,x_0)-u(t_0-r^2,x_0)|\le Cr\bar\eta(r),\quad (x_0,t_0)\in Q_1^+,\,\,\, 0<r<\ka/32,
\end{align}
where $\bar\eta$ is defined in \eqref{eq:Dini-tilde}. To prove it, we write $X_0=(t_0,x_0)$ and $X_1=(t_0-r^2,x_0)$ and assume without loss of generality $x'_0=0$. Due to \eqref{eq:u-l_0-diff-est} and triangle inequality, it suffices to show
\begin{align*}
|\ell_0^{X_0,2r}-\ell_0^{X_1,2r}|\le Cr\bar\eta(r).
\end{align*}
We claim that for any $0<s<1/4$ and $1\le\de\le n$
\begin{align}
    \label{eq:a-g-diff-center-est}&\dashint_{Q_r^+(X_1)}\int_d^{x_n}|(\bar A^{n\de})^{X_0,2r}(s)-(\bar A^{n\de})^{X_1,2r}(s)|dsd\mu\le Cr\bar\eta(r),\\
    \label{eq:l-diff-center-est}&|\ell_\de^{X_0,2r}|+|\ell_\de^{X_1,2r}|\le C\left(|\D u\|_\infty+\int_0^1\frac{\hat\eta(\rho)}\rho d\rho\right),\,\,\,\,\, |\ell_\de^{X_0,2r}-\ell_\de^{X_1,2r}|\le C\bar\eta(r).
\end{align}
Indeed, by using the triangle inequality $|(\bar A^{n\de})^{X_0,2r}-(\bar A^{n\de})^{X_1,2r}|\le |(\bar A^{n\de})^{X_0,2r}-(\bar A^{n\de})^{X_0,4r}|+|(\bar A^{n\de})^{X_0,4r}-(\bar A^{n\de})^{X_1,2r}|$ and the inclusion $Q^+_{4r}(X_1)\subset Q^+_{8r}(X_0)$ and arguing as in the proof of Lemma~\ref{lem:mean-diff-est}, we obtain \eqref{eq:a-g-diff-center-est}. Moreover, the first estimate in \eqref{eq:l-diff-center-est} follows from \eqref{eq:l-bound}. For the second one, by using \eqref{eq:u-poly-approx-est-1} and arguing as before, we obtain $|\ell_\de^{X_0,2r}-\ell_\de^{X_1,2r}|\lesssim \bar\eta(r)$, $1\le \de\le n-1$. Regarding the case $\de=n$, we use $(x_0)_n=(x_1)_n$ and apply \eqref{eq:u-poly-approx-est-2} to get
\begin{align*}
    &|\ell_n^{X_0,2r}-\ell_n^{X_1,2r}|\\
    &\lesssim \left(\dashint_{Q_r^+(X_1)}|U-((x_0)_n/x_n)^\al\ell_n^{X_0,2r}|^p+|U-((x_1)_n/x_n)^\al\ell_n^{X_1,2r}|^pd\mu\right)^{1/p}\\
    &\lesssim \left(\dashint_{Q_{2r}^+(X_0)}|U-((x_0)_n/x_n)^\al\ell_n^{X_0,2r}|^pd\mu\right)^{1/p}\\
    &\qquad+\left(\dashint_{Q_{2r}^+(X_1)}|U-((x_1)_n/x_n)^\al\ell_n^{X_1,2r}|^pd\mu\right)^{1/p}\\
    &\lesssim \bar\eta(r).
\end{align*}
Now, we observe that
\begin{align*}
    &|\ell_0^{X_0,2r}-\ell_0^{X_1,2r}|\\
    &\le \left(\dashint_{Q_r^+(X_1)}|q^{X_0,2r}-q^{X_1,2r}|^pd\mu\right)^{1/p}+\left(\dashint_{Q_r^+(X_1)}\sum_{\de=1}^{n-1}|(\ell_\de^{X_0,2r}-\ell_\de^{X_1,2r})x_i|^pd\mu\right)^{1/p}\\
    &\qquad+\Bigg(\dashint_{Q_r^+(X_1)}\bigg|\int_{d}^{x_n}\bigg[((\bar A^{nn})^{X_0,2r})^{-1}\bigg((d/s)^\al\ell_n^{X_0,2r}-\sum_{\de=1}^{n-1}(\bar A^{n\de})^{X_0,2r}\ell_\de^{X_0,2r}\bigg)\\
    &\quad\qquad\qquad\quad -((\bar A^{nn})^{X_1,2r})^{-1}\bigg((d/s)^\al\ell_n^{X_1,2r}-\sum_{\de=1}^{n-1}(\bar A^{n\de})^{X_1,2r}\ell_\de^{X_1,2r}\bigg)\bigg]ds\bigg|^pd\mu\bigg)^{1/p}\\
    &=:I+II+III.
\end{align*}
By using \eqref{eq:a-g-diff-center-est} and \eqref{eq:l-diff-center-est}, we
get $II+III\le Cr\bar\eta(r)$. Concerning $I$, since \eqref{eq:a-g-diff-center-est} also holds when $\bar A^{n\de}$ is replaced by $\bar g^n$, we have 
$$
\left(\dashint_{Q_r^+(X_1)}|u_0^{X_0,2r}-u_0^{X_1,2r}|^pd\mu\right)^{1/p}\le Cr\bar\eta(r).$$ 
By combining this with Lemma~\ref{lem:vp-est} and the triangle inequality 
$$
|q^{X_0,2r}-q^{X_1,2r}|\le |u-u_0^{X_0,2r}-q^{X_0,2r}|+|u-u_0^{X_1,2r}-q^{X_1,2r}|+|u_0^{X_0,2r}-u_0^{X_1,2r}|,
$$
we obtain $I\le Cr\bar\eta(r)$. This finishes the proof.
\end{proof}

Before closing this section, we prove the continuity of $\frac{\delta_{t,h}u}{h^{1/2}}$.

\begin{theorem}
    \label{thm:diff-quo-conti}
    Let $u$, $A$, and $\bg$ be as in Theorem~\ref{thm:DMO-reg} and $0<\be<1$. Then $\frac{\delta_{t,h}u}{h^{1/2}}\in C_X(\overline{Q_1^+})$ uniformly in $h\in(0,1/4)$.
\end{theorem}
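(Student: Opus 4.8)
The plan is to obtain the assertion as an equicontinuity statement for the family $\{w_h\}_{h\in(0,1/4)}$, $w_h:=h^{-1/2}\delta_{t,h}u$, deduced directly from the uniform $L^\infty$ bound of Theorem~\ref{thm:sol-time-est} together with the plain uniform continuity of $u$; no further use of the polynomial machinery of Section~\ref{subsec:reg-time} is needed.

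First I would record that $u$ is uniformly continuous on $\overline{Q_{3/2}^+}$. Indeed, Theorem~\ref{thm:DMO-reg} gives that $u$ is Lipschitz in $x$ and $\D_{x'}u$, $U$ are continuous on $\overline{Q_r^+(Z)}$ whenever $Q_{4r}^+(Z)\subset Q_4^+$, while Theorem~\ref{thm:sol-time-est} (applied on such subcylinders) yields continuity of $u$ in $t$; covering the compact set $\overline{Q_{3/2}^+}$ by finitely many such closed (half-)subcylinders and using forward-in-time parabolic smoothing up to $t=0$, one gets $u\in C(\overline{Q_{3/2}^+})$, hence uniformly continuous there with some modulus $\omega_u$, $\omega_u(0^+)=0$. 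Note that $\overline{Q_{3/2}^+}$ contains $\overline{Q_1^+}$ together with all its backward translates $\{(t-h,x):(t,x)\in\overline{Q_1^+}\}$ for $h\in(0,1/4)$.

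Next, for $X=(t,x),Y=(s,y)\in\overline{Q_1^+}$, $h\in(0,1/4)$, and $d:=|x-y|+|t-s|^{1/2}$, I would estimate $w_h(X)-w_h(Y)$ in two complementary ways. On the one hand, since $X,Y$ and their backward translates all lie in $\overline{Q_{3/2}^+}$ and $(X,Y)$ and $((t-h,x),(s-h,y))$ have the same parabolic distance $d$,
\[
|w_h(X)-w_h(Y)|=h^{-1/2}\big|[u(X)-u(Y)]-[u(t-h,x)-u(s-h,y)]\big|\le 2h^{-1/2}\omega_u(d);
\]
on the other hand, by Theorem~\ref{thm:sol-time-est}, $\|w_h\|_{L^\infty(Q_1^+)}\le C\omega_t(h)$ with $\omega_t(0^+)=0$. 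Given $\e>0$, I would fix $h_0\in(0,1/4)$ with $2C\sup_{0<h\le h_0}\omega_t(h)<\e$ and then $\delta_\e\in(0,1)$ with $2h_0^{-1/2}\omega_u(\delta_\e)<\e$. If $h\le h_0$, then $|w_h(X)-w_h(Y)|\le 2\|w_h\|_{L^\infty(Q_1^+)}<\e$ for all $X,Y$; if $h_0\le h<1/4$ and $d<\delta_\e$, then $|w_h(X)-w_h(Y)|\le 2h^{-1/2}\omega_u(d)\le 2h_0^{-1/2}\omega_u(\delta_\e)<\e$. In either case $d<\delta_\e$ forces $|w_h(X)-w_h(Y)|<\e$ for every $h\in(0,1/4)$, which is precisely the statement that $w_h\in C_X(\overline{Q_1^+})$ uniformly in $h$.

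The only genuinely delicate ingredient is the quantitative bound $\|w_h\|_{L^\infty}\le C\omega_t(h)$, i.e.\ Theorem~\ref{thm:sol-time-est}: it is exactly this decay that lets the $h^{-1/2}$ blow-up in the first estimate be tolerated for small $h$, so that the two regimes $h\le h_0$ and $h\ge h_0$ can be handled separately. The remaining point requiring a little care is purely bookkeeping — one must carry out the argument on a cylinder ($\overline{Q_{3/2}^+}$ above) strictly larger in time than $\overline{Q_1^+}$, so that the backward translates $(t-h,x)$ stay in the domain of uniform continuity of $u$; this just means applying Theorems~\ref{thm:DMO-reg} and \ref{thm:sol-time-est} on slightly smaller interior (and flat-boundary) subcylinders, which is routine.
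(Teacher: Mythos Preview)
Your proof is correct and takes a genuinely different, more elementary route than the paper's. The paper compares $h$ with the parabolic distance $\rho$ between the two evaluation points: when $h<\rho^2$ it uses Theorem~\ref{thm:sol-time-est} to bound $|w_h|$ by $\omega_t(h)$ and then converts this via the Dini structure into $\sigma_0(\rho)$; when $h\ge\rho^2$ it applies the mean value theorem in $x$ (or the triangle inequality in $t$) and invokes the continuity modulus $\omega_x$ of $\nabla u$ from Theorem~\ref{thm:DMO-reg}. This yields an explicit, $h$-independent modulus for $w_h$ comparable to $\sigma_0$, which is what underlies the quantitative claim around \eqref{eq:mod-conti-sigma}. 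Your argument instead compares $h$ with a fixed threshold $h_0=h_0(\e)$: for $h\le h_0$ the function $w_h$ is uniformly small by Theorem~\ref{thm:sol-time-est}, while for $h_0\le h<1/4$ the oscillation of $w_h$ is controlled by $h_0^{-1/2}$ times the modulus of uniform continuity of $u$ itself. This needs only $u\in C_X$ (which already follows from the Lipschitz-in-$x$ bound and the $t$-H\"older bound of Theorem~\ref{thm:sol-time-est}), not the finer continuity of $\nabla u$. The trade-off is that you obtain only equicontinuity, without an explicit common modulus; that suffices for the theorem as stated, but the paper's version is what is later cited when asserting that $\frac{\delta_{t,h}D_x^i\partial_t^ju}{h^{1/2}}$ has modulus of continuity $\lesssim\sigma_k$.
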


\begin{proof}
We fix $0<\rho<1$, $1\le\g\le n$ and $(t,x)\in \overline{Q_1^+}$, and let
$$
U_{\g,\rho}:=\frac{\delta_{t,h}u(t,x)-\delta_{t,h}u(t,x-\rho e_\g)}{h^{1/2}}.
$$
Suppose $h<\rho^2$. Then $\frac{|\delta_{t,h}u(t,x)|}{h^{1/2}}\lesssim \omega_t(h)$ by Theorem~\ref{thm:sol-time-est}. This, along with Lemma~\ref{lem:Dini-sum-est}, gives
\begin{align}
    \label{eq:U-diff-quo-est}
    |U_{\g,\rho}|\lesssim\omega_t(h)=\sum_{k=0}^\infty\frac1{2^k}\hat\eta(h^{1/2}/{2^k})\lesssim\sum_{k=0}^\infty\frac1{2^k}\int_0^{\frac{\rho}{2^k}}\frac{\hat\eta(s)}sds\lesssim \int_0^\rho\frac{\hat\eta(s)}sds\le \sigma_0(\rho).
\end{align}
On the other hand, when $h\ge\rho^2$, we have by \eqref{eq:mod-conti-space-est} that for $\omega_x$ as in \eqref{eq:omega-x}
\begin{align*}
    |U_{\g,\rho}|&=\frac{|u(t,x)-u(t,x-\rho e_\g)-(u(t-h,x)-u(t-h,x-\rho e_\g)|}{h^{1/2}}\\
    &\le \frac{\rho}{h^{1/2}}\int_0^1|D_\g u(t,x-\tau\rho e_\g)-D_\g u(t-h,x-\tau\rho e_\g)|d\tau\\
    &\lesssim\frac{\rho\,\omega_x(h^{1/2})}{h^{1/2}}\lesssim \omega_x(\rho),
\end{align*}
where in the last step we used the monotonicity of $r\longmapsto\frac{\omega_x(r)}r$, which follows from that of $r\longmapsto\frac{\hat\eta_\bullet(r)}r$. Therefore, we conclude that $\frac{\delta_{t,h}u}{h^{1/2}}\in C_x(\overline{Q_1^+})$ uniformly in $h$.

Next, we prove $\frac{\delta_{t,h}u}{h^{1/2}}\in C_t(\overline{Q_1^+})$ uniformly in $h$. Set
$$
U_{t,\rho}:=\frac{\delta_{t,h}u(t,x)-\delta_{t,h}u(t-\rho,x)}{h^{1/2}}.
$$
If $h<\rho$, then we apply Theorem~\ref{thm:sol-time-est} to have $|U_{t,\rho}|\lesssim\omega_t(h)$ and argue as in \eqref{eq:U-diff-quo-est} to obtain $|U_{t,\rho}|\lesssim\sigma_0(\rho^{1/2})$. On the other hand, when $h\ge\rho$, we use Theorem~\ref{thm:sol-time-est} again to deduce
\begin{align*}
    |U_{t,\rho}|&\le \frac{|u(t,x)-u(t-\rho,x)|+|u(t-h,x)-u(t-h-\rho,x)|}{h^{1/2}}\lesssim \frac{\rho^{1/2}\omega_t(\rho)}{h^{1/2}}\le\omega_t(\rho).
\end{align*}
\end{proof}

%%%%%%%%%%%%%%%%%%%%%%%%%%%%%%%%%%%%%%%%%%%%%%%%%%%%%%%%%%%%%

\section{Higher order Schauder type Estimates}\label{sec:HO-schauder}
The purpose of this section is to provide the proof of Theorem~\ref{thm:DMO-HO-reg-par}. We begin with a simpler case when the coefficients are in Hölder spaces and then use that result to establish the theorem.

\subsection{Systems with Hölder coefficients}

This section is devoted to the proof of the following result.

\begin{theorem}
    \label{thm:holder-HO-reg}
    Let $k\in\mathbb{N}$ and $0<\be<1$ and let $u$ be a solution of \eqref{eq:pde-par}.
    Suppose $A$ satisfies \eqref{eq:assump-coeffi} and $A,\bg\in C^{\frac{k+\be-1}2,k+\be-1}(\overline{Q_1^+})$. Then $u\in C^{\frac{k+\be}2,k+\be}(\overline{Q^+_{1/2}})$.
\end{theorem}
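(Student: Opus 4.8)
The plan is to prove Theorem~\ref{thm:holder-HO-reg} by induction on $k$, with the base case $k=1$ already essentially contained in the results of Section~\ref{sec:schauder}. Indeed, when $k=1$ and $A,\bg\in C^{\be/2,\be}=C^{\frac{0+\be}2,\be}$, the Hölder modulus $\omega(r)=r^\be$ satisfies the Dini condition and in fact $\eta_A^{\al^-},\eta_\bg^{\al^-}\lesssim r^\be$, so that $A,\bg\in\cH_{\al^-}(\overline{Q_1^+})$; thus Theorem~\ref{thm:DMO-reg} gives Lipschitz regularity of $u$ in $x$ and continuity of $\D_{x'}u$ and $U=A^{n\de}D_\de u-g_n$, Theorem~\ref{thm:sol-time-est} gives the time decay $|\delta_{t,h}u|/h^{1/2}\lesssim\omega_t(h)\lesssim h^{\be/2}$, and one checks from the explicit form of $\sigma_0$ in \eqref{eq:mod-conti-sigma} that all these moduli are $\lesssim r^\be$. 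Combining the spatial estimate \eqref{eq:mod-conti-space-est} with the time estimate \eqref{eq:sol-time-est} and interpolation yields $u\in C^{\frac{1+\be}2,1+\be}(\overline{Q_{1/2}^+})$, which is the case $k=1$.

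For the inductive step, suppose the statement holds for $k-1$ and assume $A,\bg\in C^{\frac{k+\be-1}2,k+\be-1}(\overline{Q_1^+})$. The idea is to differentiate the system \eqref{eq:pde-par} in the tangential variables $X'=(t,x')$, for which the degenerate/singular structure in $x_n$ is preserved. Concretely, for $1\le\g\le n-1$ the tangential derivative $D_\g u$ (and, iterating, any $D_{x'}^{\mathbf\al'}\partial_t^j u$ with $|\mathbf\al'|+2j\le k-1$) solves a system of the same form
\[
D_\g(x_n^\al A^{\g\de}D_\de(D_{x'}^{\mathbf\al'}\partial_t^j u))-x_n^\al\partial_t(D_{x'}^{\mathbf\al'}\partial_t^j u)=\div(x_n^\al\tilde\bg)
\]
with a new right-hand side $\tilde\bg$ built from $D_{x'}^{\mathbf\al'}\partial_t^jA$ acting on lower-order derivatives of $u$, and an analogously modified conormal condition. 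Since $A\in C^{\frac{k+\be-1}2,k+\be-1}$ and, by the induction hypothesis applied at a lower level, the relevant derivatives of $u$ already lie in $C^{\frac{(k-1)+\be}2,(k-1)+\be}$, the new data $\tilde A=A$ and $\tilde\bg$ satisfy the hypotheses of Theorem~\ref{thm:holder-HO-reg} at order $k-1$ (equivalently $\tilde A,\tilde\bg\in\cH^{k-2}_{\al^-}$ with $\eta\lesssim r^\be$). Applying the inductive conclusion to $D_\g u$ gives $D_\g u\in C^{\frac{(k-1)+\be}2,(k-1)+\be}(\overline{Q_{1/2}^+})$, hence control of $D_{x'}^{\mathbf\al}\partial_t^j u$ and of the difference quotients $\delta_{t,h}D_{x'}^{\mathbf\al}\partial_t^j u/h^{1/2}$ for all multi-indices with $|\mathbf\al|+2j\le k$ that contain at least one tangential spatial derivative, together with the tangential part of the conormal flux.

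The remaining and most delicate point is to recover the purely normal information, i.e., the behavior of $D_n^i\partial_t^j u$ with $i\ge1$ — recalling Remark~\ref{rem:DMO-HO-reg-par}, the top normal derivative $D_n^k u$ need not be continuous, so what one actually proves is that $x_n^\al D_n u$ (equivalently the conormal flux $U=A^{n\de}D_\de u-g_n$) and its tangential derivatives have the expected regularity. This is done exactly as in Section~\ref{subsec:reg-space}: from the first equation of the (differentiated) system one writes $D_n(x_n^\al V)=x_n^\al\partial_t v-x_n^\al\sum_{\g<n}\sum_\de \bar A^{\g\de}D_{\g\de}v$, integrates in $x_n$ from $0$, and uses the conormal boundary condition together with the already-established regularity of the tangential and time derivatives of $v$ to control $V$ and its Hölder seminorms; this is the mechanism that produced \eqref{eq:V-est} and the estimate for $[\D_{x'}v]_{C^{1/2,1}}$ in Lemma~\ref{lem:psi-bdry-est}. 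Feeding this back through the Campanato iteration (Lemmas~\ref{lem:psi-bdry-est}--\ref{lem:psi-est} and their time-regularity analogues Lemmas~\ref{lem:vp-bdry-est}--\ref{lem:vp-est}), now applied to the differentiated system whose coefficients have one more Hölder derivative, upgrades the modulus from order $k-1$ to order $k$. I expect the main obstacle to be the time regularity: as the authors flag in the discussion after Theorem~\ref{thm:DMO-HO-reg-par} and in Lemma~\ref{lem:mean-diff-est}, obtaining a Dini (here $r^{\be/2}$) decay for $\delta_{t,h}(D_{x'}^{\mathbf\al}\partial_t^j u)/h^{1/2}$ at the top order requires the full machinery of Section~\ref{subsec:reg-time} — the polynomial classes $\mathbb{A}_1^{X_0,r},\mathbb{A}_2^{X_0,r}$, the comparison estimate Lemma~\ref{lem:mean-diff-est}, and the three-step argument of the proof of Theorem~\ref{thm:sol-time-est} — applied to the differentiated equation, and this is precisely the step that forces the hypothesis $A,\bg\in\cH^{k-1}_{\al^-}$ rather than $\cH^{k-1}_\al$. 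Once both the spatial moduli (via \eqref{eq:mod-conti-space-est}-type bounds) and the time moduli (via \eqref{eq:sol-time-est}-type bounds) are in hand at order $k$ with modulus $\lesssim r^\be$, a standard interpolation/mixed-difference argument assembles them into $u\in C^{\frac{k+\be}2,k+\be}(\overline{Q_{1/2}^+})$, completing the induction.
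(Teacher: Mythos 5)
The overall scheme — base case $k=1$ from Section~\ref{sec:schauder}/\ref{subsec:reg-time}, then induction via tangential differentiation of \eqref{eq:pde-par}, with the equation used to recover the normal directions and the conormal flux — is the paper's scheme too, and your description of that part is accurate. But there is a genuine gap at the decisive step, the passage from $k=1$ to $k=2$, and in particular in how $\partial_t u\in C^{\be/2,\be}$ and $D_nu\in C^{\frac{1+\be}2}_t$ are produced.

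You propose to obtain these by "applying the full machinery of Section~\ref{subsec:reg-time} to the differentiated equation." This does not close the argument. Differentiating \eqref{eq:pde-par} in $x'$ gives you control of $D_{x'}u$ and, via Section~\ref{subsec:reg-time}, of $\delta_{t,h}D_{x'}u/h^{1/2}$ — but that delivers nothing about $\partial_t u$ itself or about the time regularity of $D_n u$. Differentiating \eqref{eq:pde-par} directly in $t$ is not available at this level: for $k=2$ the hypothesis is only $A,\bg\in C^{\frac{1+\be}2,1+\be}$, so $\partial_tA$ need not exist pointwise, and the equation for $u_t$ (which the paper does use at step $k\ge3$, see \eqref{eq:pde-time-deriv}) makes no sense. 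Section~\ref{subsec:reg-time} as written proves exactly the $k=1$ assertion (a first-order decay for $\delta_{t,h}u/h^{1/2}$); re-running it with higher-order polynomial classes would be a real new development, which you gesture at but do not carry out. Also, the remark that this is "precisely the step that forces $\cH^{k-1}_{\al^-}$ rather than $\cH^{k-1}_\al$" is about the DMO Theorem~\ref{thm:DMO-HO-reg-par}; in the H\"older setting of Theorem~\ref{thm:holder-HO-reg} there is no such distinction, so that explanation is misplaced.

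What the paper actually does for $k=2$ (Theorem~\ref{thm:holder-HO-reg-2}) avoids differentiating in $t$ entirely. It works with the \emph{difference quotient} $u^h=\delta_{t,h}u/h^{1/2}$, which solves \eqref{eq:pde-diff-quo} with right-hand side $\hat\bg^h=\bg^h-A^h D u(\cdot-h,\cdot)$; the key computation \eqref{eq:diff-quo-holder}--\eqref{eq:g-h-bound} shows $\|\hat\bg^h\|_{C^{\be/2,\be}}$ is bounded uniformly in $h$, so the $k=1$ case applied to $u^h$ gives $\|u^h\|_{C^{\frac{1+\be}2,1+\be}}\le C$ uniformly in $h$ (estimate \eqref{eq:u-h-holder}). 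From this, $D_nu\in C^{\frac{1+\be}2}_t$ and $\partial_tu\in C^{\be/2}_t$ follow by Stein's second-difference characterization of H\"older continuity, and $\partial_tu\in C^\be_x$ comes from the dyadic telescoping identity \eqref{eq:delta-u-div} combined with the mixed bound \eqref{eq:delta-u-h-est}. None of this is a re-run of the Campanato machinery of Section~\ref{subsec:reg-time}; it is a separate finite-difference argument layered on top of the $k=1$ result. Your proposal, as written, skips exactly this piece, which is the crux of the theorem. The rest (tangential differentiation, recovering $D_{nn}u$ from the equation as in Step~4, and the induction from $k=2$ to general $k$) you have essentially right.
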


The case when $k=1$ in Theorem~\ref{thm:holder-HO-reg} can be inferred from the proofs of Theorems~\ref{thm:DMO-reg}, \ref{thm:sol-time-est}, and \ref{thm:diff-quo-conti}.

The following theorem addresses the case when $k=2$. This is derived from an application of the result when $k=1$ and a fine analysis of finite difference quotients.

\begin{theorem}
    \label{thm:holder-HO-reg-2}
    Theorem~\ref{thm:holder-HO-reg} holds when $k=2$.
\end{theorem}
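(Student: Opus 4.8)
The plan is to reduce to the case $k=1$ of Theorem~\ref{thm:holder-HO-reg} (which, as noted, is contained in the proofs of Theorems~\ref{thm:DMO-reg}, \ref{thm:sol-time-est} and \ref{thm:diff-quo-conti}) by differentiating the system tangentially and in time, and then recovering the remaining normal and time derivatives from the equation; write $U:=A^{n\de}D_\de u-g_n$ throughout. \emph{Step 1 (tangential differentiation).} Applying the $k=1$ case in $Q_{3/4}^+$ gives $u\in C^{(1+\be)/2,1+\be}(\overline{Q_{3/4}^+})$, so $D_xu,U\in C^{\be/2,\be}$, and moreover $U=0$ on the flat boundary since the last component of the approximating vector in \eqref{eq:psi-inf} vanishes there. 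For $1\le i\le n-1$, differentiating \eqref{eq:pde-par} in $x_i$ shows that $D_iu$ solves a system of the same form, with the same coefficient $A$ and with forcing $\bg^{(i)}$ assembled from $(D_iA^{\g\de})D_\de u$ and $D_i g_\g$; since $A,\bg\in C^{(1+\be)/2,1+\be}$ and $D_xu\in C^{\be/2,\be}$, we get $\bg^{(i)}\in C^{\be/2,\be}(\overline{Q_{3/4}^+})$. The $k=1$ case applied again yields $D_iu\in C^{(1+\be)/2,1+\be}(\overline{Q_{5/8}^+})$; in particular all mixed derivatives $D_{i\de}u$ lie in $C^{\be/2,\be}$, $D_iu\in C_t^{(1+\be)/2}$, and $D_iU\in C^{\be/2,\be}$.

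\emph{Step 2 (time difference quotient).} For $0<h<1/8$ set $v_h:=\delta_{t,h}u/h^{1/2}$. Using $\delta_{t,h}(A^{\g\de}D_\de u)=A^{\g\de}(t)\,\delta_{t,h}D_\de u+(\delta_{t,h}A^{\g\de})\,D_\de u(\cdot-h)$, the function $v_h$ solves a system of the same form with coefficient $A$ (still uniformly parabolic) and forcing $\tilde\bg_h$ built from $\delta_{t,h}\bg/h^{1/2}$ and $(\delta_{t,h}A^{\g\de}/h^{1/2})D_\de u(\cdot-h)$, with the correspondingly differenced conormal condition. Since $A,\bg\in C_t^{(1+\be)/2}$ with exponent $(1+\be)/2>\tfrac12$, interpolating between $\|\delta_{t,h}A\|_{L^\infty}+\|\delta_{t,h}\bg\|_{L^\infty}\lesssim h^{(1+\be)/2}$ and the uniform $C^{1+\be}_x$ bound gives $\delta_{t,h}A/h^{1/2},\ \delta_{t,h}\bg/h^{1/2}\in C^{\be/2,\be}$ uniformly in $h$, and hence, with Step 1, $\|\tilde\bg_h\|_{C^{\be/2,\be}(\overline{Q_{3/4}^+})}\lesssim1$ uniformly in $h$. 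Applying the $k=1$ case to $v_h$ gives $\|v_h\|_{C^{(1+\be)/2,1+\be}(\overline{Q_{1/2}^+})}\lesssim1$ uniformly in $h$. From $[v_h]_t^{((1+\be)/2)}\lesssim1$, taking the second increment of size $h$ yields $|\delta_{t,h}^2u|\lesssim h^{1+\be/2}$, so $\pa_tu$ exists, is bounded, and $\pa_tu\in C_t^{\be/2}$; likewise $\|\delta_{t,h}D_xu/h^{1/2}\|_{C^{\be/2,\be}}\lesssim1$. Only \emph{one} time difference is taken here: since $A$ is merely $C_t^{(1+\be)/2}$, a second difference at scaling $h^{-1}$ would be uncontrolled — this is exactly why the $k=2$ statement needs $C^{(1+\be)/2,1+\be}$ rather than $C^{\be/2,\be}$ coefficients.

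\emph{Step 3 (normal and time derivatives; the main obstacle).} Expanding \eqref{eq:pde-par} and isolating the $\g=\de=n$ term gives $A^{nn}D_{nn}u=\pa_tu-\al x_n^{-1}U+R$, where $R\in C^{\be/2,\be}$ is assembled from $D_x\bg$, $(D_xA)D_xu$ and the mixed derivatives $D_{\g\de}u$ with $\min(\g,\de)\le n-1$, all controlled by Steps 1--2. From the conormal condition, $x_n^\al U(X',x_n)=\int_0^{x_n}s^\al F(X',s)\,ds$ with $F:=\pa_tu-\sum_{\g<n}D_\g(A^{\g\de}D_\de u-g_\g)$ bounded, so $|U|\lesssim x_n$; hence $x_n^{-1}U$, $D_{nn}u$ and $\pa_tu$ are all bounded. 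The hard part is the Hölder-in-$x$ regularity of $D_{nn}u$ and of $\pa_tu$: through the identity above and through $x_n^{-1}U=(\al+1)\int_0^1t^\al F(X',tx_n)\,dt$ with $F=\pa_tu+C^{\be/2,\be}$, these two quantities are coupled to one another in a way that the equation alone does not close, so a genuinely new input is needed. I expect this to be supplied by a Campanato-type decay estimate at the flat boundary for the oscillation of the vector $(D_{x'}^2u,\,D_{x'}U,\,\pa_tu,\,x_n^{-1}U)$, in the spirit of Lemmas~\ref{lem:psi-bdry-est}--\ref{lem:psi-est} and of the quantity $\vp$ in Section~\ref{subsec:reg-time}, exploiting that $\eta_A(r)\lesssim r^\be$ because $A\in C^\be$, with the finite-difference analysis of Step 2 providing the decay in the time direction. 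Granting this, $D_x^2u\in C^{\be/2,\be}$ follows by combining with the tangential pieces of Step 1, and the remaining mixed seminorms $[D_xu]_t^{((1+\be)/2)}$ and $[D_x^2u]_t^{(\be/2)}$ follow by parabolic interpolation from $u\in C_t^{1+\be/2}$ (Step 2) and $u\in C_x^{2+\be}$; thus $u\in C^{(2+\be)/2,2+\be}(\overline{Q_{1/2}^+})$.
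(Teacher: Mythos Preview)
Your Steps~1 and~2 are correct and match the paper's argument almost verbatim: tangential differentiation plus the $k=1$ case gives $D_{x'}u\in C^{\frac{1+\be}2,1+\be}$, and the $k=1$ case applied to $v_h=\delta_{t,h}u/h^{1/2}$ gives $\|v_h\|_{C^{\frac{1+\be}2,1+\be}}\lesssim 1$ uniformly in $h$, from which $\partial_tu\in C_t^{\be/2}$ and $D_nu\in C_t^{\frac{1+\be}2}$ follow (the paper invokes \cite{Ste70}*{Propositions~8--9, Chapter~5} for this last step).

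The genuine gap is in your Step~3, and your diagnosis of the difficulty is off. You say $D_{nn}u$ and $\partial_tu$ are ``coupled to one another in a way that the equation alone does not close'' and propose a new Campanato estimate for a four-component vector. But the paper decouples them completely: it first proves $\partial_tu\in C_x^\be$ \emph{without using the equation at all}, purely from the uniform bound on $v_h$ obtained in Step~2. The key is the telescoping identity $\delta_{t,h}f=\tfrac12\delta_{t,2h}f+\tfrac12\delta_{t,h}^2f$, iterated $j$ times with $2^jh\sim 1$, applied to $f=\delta_{x_\g,r}^2u$. Each term $\delta_{t,2^ih}^2\delta_{x_\g,r}^2u=(2^ih)^{1/2}\,\delta_{t,2^ih}\delta_{x_\g,r}^2u^{2^ih}$ is controlled by the two-sided bound
\[
|\delta_{t,h}\delta_{x_\g,r}^2u^h|\lesssim \min\{h^{\frac{1+\be}2},\,r^{1+\be}\},
\]
which is immediate from $\|v_h\|_{C^{\frac{1+\be}2,1+\be}}\lesssim1$. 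Splitting the sum at the crossover scale $2^{i_0}h\sim r^2$ and summing the two geometric series gives $|\delta_{t,h}\delta_{x_\g,r}^2u|\lesssim hr^\be$ uniformly in $h$; dividing by $h$ and sending $h\to0$ yields $|\delta_{x_\g,r}^2\partial_tu|\lesssim r^\be$, hence $\partial_tu\in C_x^\be$ by \cite{Ste70}. Once $\partial_tu\in C^{\be/2,\be}$ is in hand, your integral representation $x_n^\al U=\int_0^{x_n}s^\al F\,ds$ with $F\in C^{\be/2,\be}$ immediately gives $D_nU\in C^{\be/2,\be}$ and hence $D_{nn}u\in C^{\be/2,\be}$; there is no circularity. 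Your proposed Campanato route for the coupled vector might be made to work, but it is unnecessary and substantially harder than the finite-difference argument the paper actually uses.
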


\begin{proof}
To show that $u\in C^{\frac{2+\be}2,2+\be}$, it is enough to show
\begin{align}
    \label{eq:holder-cond-2}
    D_{x'}u\in C^{\frac{1+\be}2,1+\be},\quad \partial_tu\in C^{\be/2,\be}, \quad D_nu\in C^{\frac{1+\be}2}_t \quad\text{and}\quad D_{nn}u\in C^{\be/2,\be}.
\end{align}
We divide the proof of \eqref{eq:holder-cond-2} into three steps.

\medskip\noindent\emph{Step 1.} We first prove $D_{x'}u\in C^{\frac{1+\be}2,1+\be}$. From \eqref{eq:pde-par}, we see that $D_{x'}u$ solves
\begin{align}\label{eq:pde-tan-deriv}
    \begin{cases}
        D_\g(x_n^\al A^{\g\de}D_\de(D_{x'}u))-x_n^\al\partial_t(D_{x'}u)=\div(x_n^\al\bar\bg)\\
        \lim_{x_n\to0+}x_n^\al(A^{n\de}D_\de(D_{x'}u)-\bar g^n)=0
    \end{cases}\text{in }Q_1^+,
\end{align}
where $\bar g^\g:=D_{x'}g^\g-D_{x'}A^{\g\de}D_\de u$, $1\le\g\le n$ (one can derive \eqref{eq:pde-tan-deriv} rigorously by using the finite difference quotient). Since $u\in C^{\frac{1+\be}2,1+\be}$ by the case $k=1$, we have $\bar\bg\in C^{\be/2,\be}$, and thus $D_{x'}u\in C^{\frac{1+\be}2,1+\be}$ by the case $k=1$ again.

\medskip\noindent\emph{Step 2.} We fix $h\in(0,1/4)$ and write
\begin{align}\label{eq:diff-quo}
u^h(t,x):=\frac{u(t,x)-u(t-h,x)}{h^{1/2}}=\frac{\de_{t,h}u(t,x)}{h^{1/2}}.
\end{align}
As in Section~\ref{subsec:reg-time}, we write $\|u\|_{L^1(Q_1^+,d\mu)}=\|u\|_{1,\mu}$, where $d\mu=x_n^{\al^-}dX$. The aim of this step is to prove that $u^h\in C^{\frac{1+\be}2,1+\be}(\overline{Q_{1/2}^+})$ with 
\begin{align}
    \label{eq:u-h-holder}
    \|u^h\|_{C^{\frac{1+\be}2,1+\be}(\overline{Q^+_{1/2}})}\le C(\|u\|_{1,\mu}+\|\bg\|_{C^{\frac{1+\be}2,1+\be}})
\end{align}
for some constant $C>0$, independent of $h\in (0,1/4)$. To this end, we define $(A^{\g\de})^h$ and $\bg^h$ in a similar way as \eqref{eq:diff-quo}. From \eqref{eq:pde-par}, we deduce 
\begin{align}\label{eq:pde-diff-quo}
    \begin{cases}
        D_\g(x_n^\al A^{\g\de}D_\de u^h)-x_n^\al\partial_t u^h=\div(x_n^\al\hat\bg^h)\\
        \lim_{x_n\to 0+}x_n^\al(A^{n\de}D_\de u^h-(\hat g^n)^h)=0
    \end{cases}\text{in }Q_1^+,
\end{align}
where $(\hat g^\g)^h(t,x):=(g^\g)^h(t,x)-(A^{\g\de})^h(t,x)D_\de u(t-h,x)$, $1\le\g\le n$.

We claim that there exists a constant $C>0$, independent of $h$, such that
\begin{align}
    \label{eq:diff-quo-holder}\begin{split}
    &|\bg^h(t,x)-\bg^{h}(s,x)|\le C[\bg]_{C_t^{\frac{1+\be}2}}|t-s|^{\be/2}\quad\text{for any $t,s\in (-1,0)$ and $x\in D_1^+$},\\
    &|\bg^h(t,x)-\bg^{h}(t,y)|\le C[\bg]_{C^{\frac{1+\be}2, 1+\be}}|x-y|^{\be}\quad\text{for any $t\in (-1,0)$ and $x,y\in D_1^+$}.
\end{split}\end{align}
We first prove the first estimate in \eqref{eq:diff-quo-holder}. If $|t-s|\le h$, then
\begin{align*}
    |\bg^h(t,x)-\bg^h(s,x)|&\le \frac{|\bg(t,x)-\bg(s,x)|+|\bg(t-h,x)-\bg(s-h,x)|}{h^{1/2}}\\
    &\le \frac{C[\bg]_{C_t^{\frac{1+\be}2}}|t-s|^{\frac{1+\be}2}}{h^{1/2}}\le C[g]_{C_t^{\frac{1+\be}2}}|t-s|^{\be/2}.
\end{align*}
On the other hand, if $|t-s|>h$, then from
\begin{align}\label{eq:g-h-bound}
|\bg^h(t,x)|=\frac{|\bg(t,x)-\bg(t-h,x)|}{h^{1/2}}\le C[\bg]_{C_t^{\frac{1+\be}2}}h^{\be/2},
\end{align}
we have
$$
|\bg^h(t,x)-\bg^{h}(s,x)|\le |\bg^h(t,x)|+|\bg^{h}(s,x)|\le C[\bg]_{C_t^{\frac{1+\be}2}}|t-s|^{\be/2}.
$$
Next, we verify the second estimate in \eqref{eq:diff-quo-holder}. If $|x-y|\le h^{1/2}$, then we have
\begin{align*}
    &|\bg^h(t,x)-\bg^h(t,y)|\\
    &=\frac{|(\bg(t,x)-\bg(t,y))-(\bg(t-h,x)-\bg(t-h,y))|}{h^{1/2}}\\
    &\le \frac{(|\D\bg(t,z_1)-\D\bg(t,z_2)|+|\D\bg(t,z_2)-\D\bg(t-h,z_2)|)|x-y|}{h^{1/2}}\\
    &\le \frac{|x-y|^{\be+1}[\bg]_{C^{\frac{1+\be}2, 1+\be}}}{h^{1/2}}+\frac{|x-y|[\bg]_{C^{\frac{1+\be}2, 1+\be}}}{h^{\frac{1-\be}2}}\le 2[\bg]_{C^{\frac{1+\be}2, 1+\be}}|x-y|^\be,
\end{align*}
where we applied the mean value theorem and the triangle inequality in the second step.
On the other hand, if $|x-y|>h^{1/2}$, then we use \eqref{eq:g-h-bound} to get
$$
|\bg^h(t,x)-\bg^h(t,y)|\le C[\bg]_{C_t^{\frac{1+\be}2}}h^{\be/2}\le C[\bg]_{C_t^{\frac{1+\be}2}}|x-y|^\be.
$$

Notice that \eqref{eq:diff-quo-holder} and \eqref{eq:g-h-bound} imply $\|\bg^h\|_{C^{\be/2,\be}}\le C[\bg]_{C^{\frac{1+\be}2, 1+\be}}$ for some constant $C>0$, independent of $h$. Since this holds for $(A^{\g\de})^h$ by the same reasoning and also for $D_x u$ as $u\in C^{\frac{1+\be}2,1+\be}$, we have $\|\hat\bg^h\|_{C^{\be/2,\be}}\le C(\|u\|_{1,\mu}+\|\bg\|_{C^{\frac{1+\be}2,1+\be}})$. Now, \eqref{eq:u-h-holder} follows by the induction hypothesis.

\medskip\noindent\emph{Step 3.} In this step, we show that \eqref{eq:u-h-holder} implies $D_nu\in C^{\frac{1+\be}2}_t$ and $\partial_tu\in C^{\be/2}$. Indeed, from \eqref{eq:u-h-holder}, we have $\|D_nu^h\|_{C^{\be/2}_t}\le(\|u\|_{1,\mu}+\|\bg\|_{C^{\frac{1+\be}2,1+\be}})$, which yields
\begin{align*}
    |D_nu(t+h,x)-2D_nu(t,x)+D_nu(t-h,x)|\le C(\|u\|_{1,\mu}+\|\bg\|_{C^{\frac{1+\be}2,1+\be}})h^{\frac{1+\be}2}.
\end{align*}
Thus, $D_nu\in C^{\frac{1+\be}2}_t$ by \cite{Ste70}*{Propositions 8 in Chapter 5}.

Moreover, since \eqref{eq:u-h-holder} gives $u^h\in C^{\frac{1+\be}2}_t$, we have
$$
|u(t+h,x)+u(t-h,x)-2u(t,x)|\le C(\|u\|_{1,\mu}+\|\bg\|_{C^{\frac{1+\be}2,1+\be}})h^{1+\frac\be2}.
$$
Thus $\partial_tu\in C^{\be/2}_t$ by \cite{Ste70}*{Propositions 8 and 9 in Chapter 5}. 

It remains to show $\partial_tu\in C^\be_x$. To this end, we claim that there exists a constant $C>0$, independent of $h$, such that
\begin{align}
    \label{eq:delta-u-h-est}
    |\de_{t,h}\de_{x_\g,r}^2u^h|\le C(\|u\|_{1,\mu}+\|\bg\|_{C^{\frac{1+\be}2,1+\be}})\min\left\{h^{\frac{1+\be}2}, r^{1+\be}\right\}\quad\text{in }Q^+_{3/4}.
\end{align}
Indeed, if $r\le \sqrt h$, then we use $|\de_{x_\g,r}^2u^h|\le C(\|u\|_{1,\mu}+\|\bg\|_{C^{\frac{1+\be}2,1+\be}})r^{1+\be}$, which follows from \eqref{eq:u-h-holder}, to obtain
\begin{align*}
|\delta_{t,h}(\delta_{x_\g,r}^2u^h)(t,x)|&\le |\de_{x_\g,r}^2u^h(t,x)|+|\de_{x_\g,r}^2u^h(t-h,x)|\\
&\le C(\|u\|_{1,\mu}+\|\bg\|_{C^{\frac{1+\be}2,1+\be}})r^{1+\be}.  
\end{align*}
Similarly, if $\sqrt h<r$, then we use $|\de_{t,h}u^h|\le C(\|u\|_{1,\mu}+\|\bg\|_{C^{\frac{1+\be}2,1+\be}})h^{\frac{1+\be}2}$ to get $$
|\delta_{x_\g,r}^2(\de_{t,h}u^h)|\le C(\|u\|_{1,\mu}+\|\bg\|_{C^{\frac{1+\be}2,1+\be}})h^{\frac{1+\be}2}.
$$

We fix $0<r<1/4$, and let $h=h_j:=2^{-j-2}$ with large $j\in\mathbb{N}$ so that $h<r^2/4$. It is directly seen that for any function $f$,
$$
\de_{t,h}f=\frac12\de_{t,2h}f+\frac12\de_{t,h}^2f.
$$
By iterating this equality and putting $f=\de_{x_\g,r}^2u$, $1\le \g\le n$, we deduce
\begin{align}
    \label{eq:delta-u-div}
    \begin{split}
        \de_{t,h}\de_{x_\g,r}^2u&=\frac1{2^j}\de_{t,2^jh}\de_{x_\g,r}^2u+\sum_{i=0}^{j-1}\frac1{2^{i+1}}\de_{t,2^ih}^2\de_{x_\g,r}^2u=:{\rm I}+{\rm II}.
    \end{split}
\end{align}
Regarding $\rm{I}$, we use $u^{2^jh}=\frac{\de_{t,2^jh}u}{\sqrt{2^jh}}$, $2^jh=1/4$, and 
$$
\left[u^{2^{j}h}\right]_{C^{\frac{1+\be}2,1+\be}}\le C(\|u\|_{1,\mu}+\|\bg\|_{C^{\frac{1+\be}2,1+\be}})
$$ 
to get
$$
|{\rm I}|\le\frac{\sqrt{2^jh}}{2^j}\left|\de_{x_\g,r}^2u^{2^jh}\right|\le C(\|u\|_{1,\mu}+\|\bg\|_{C^{\frac{1+\be}2,1+\be}})hr^{1+\be}.
$$
To estimate ${\rm II}$, we take $i_0\in \mathbb{N}$ such that $2^{i_0-1}h<r^2\le 2^{i_0}h$. We note that $i_0<j$ and use $u^{2^ih}=\frac{\de_{t,2^ih}u}{\sqrt{2^ih}}$ to have
\begin{align*}
    {\rm II}&=\sum_{i=0}^{j-1}\frac1{2^{i+1}}\de_{t,2^ih}\de_{x_\g,r}^2(\de_{t,2^ih}u)=\sum_{i=0}^{j-1}\frac{\sqrt{2^ih}}{2^{i+1}}\de_{t,2^ih}\de_{x_\g,r}^2u^{2^ih}\\
    &=\frac12\sum_{i=0}^{i_0-1}\sqrt{\frac{h}{2^i}}\de_{t,2^ih}\de_{x_\g,r}^2u^{2^ih}+\frac12\sum_{i=i_0}^{j-1}\sqrt{\frac{h}{2^i}}\de_{t,2^ih}\de_{x_\g,r}^2u^{2^ih}\\
    &=:{\rm II}_1+{\rm II}_2.
\end{align*}
For ${\rm II}_1$, we use \eqref{eq:delta-u-h-est} and $2^{i_0-1}h<r^2$ to get
\begin{align*}
    |{\rm II}_1|&\le C(\|u\|_{1,\mu}+\|\bg\|_{C^{\frac{1+\be}2,1+\be}})\sum_{i=0}^{i_0-1}\sqrt{\frac{h}{2^i}}(2^ih)^{\frac{1+\be}2}\\
    &\le C(\|u\|_{1,\mu}+\|\bg\|_{C^{\frac{1+\be}2,1+\be}})\sum_{i=0}^{i_0-1}\sqrt{\frac{h}{2^i}}(2^ih)^{1/2}(2^{i-i_0+1}r^2)^{\be/2}\\
    &\le C(\|u\|_{1,\mu}+\|\bg\|_{C^{\frac{1+\be}2,1+\be}})hr^\be\sum_{i=0}^{i_0-1}(2^{i-i_0+1})^{\be/2}\\
    &\le C(\|u\|_{1,\mu}+\|\bg\|_{C^{\frac{1+\be}2,1+\be}})hr^\be.
\end{align*}
Concerning ${\rm II}_2$, from \eqref{eq:delta-u-h-est} and $r^2\le 2^{i_0}h$, we find
\begin{align*}
|{\rm II}_2|&\le C(\|u\|_{1,\mu}+\|\bg\|_{C^{\frac{1+\be}2,1+\be}})\sum_{i=i_0}^{\infty}\sqrt{\frac{h}{2^i}}r^{1+\be}\le C(\|u\|_{1,\mu}+\|\bg\|_{C^{\frac{1+\be}2,1+\be}})\sqrt{\frac{h}{2^{i_0}}}r^{1+\be}\\
&\le C(\|u\|_{1,\mu}+\|\bg\|_{C^{\frac{1+\be}2,1+\be}})hr^\be.
\end{align*}
Now, combining \eqref{eq:delta-u-div} with the estimates for ${\rm I}$, ${\rm II}_1$, and ${\rm II}_2$ yields
\begin{align*}
    \left|\de_{x_\g,r}^2\left(\frac{\de_{t,h}u}h\right)\right|=\frac{|\delta_{t,h}\de_{x_\g,r}^2u|}h\le C(\|u\|_{1,\mu}+\|\bg\|_{C^{\frac{1+\be}2,1+\be}})r^\be
\end{align*}
for some constant $C>0$, independent of $h$. By taking $h=h_j\to0$, we infer
$$
|\de_{x_\g,r}^2u_t|\le C(\|u\|_{1,\mu}+\|\bg\|_{C^{\frac{1+\be}2,1+\be}})r^\be,\quad 1\le \g\le n.
$$
By applying \cite{Ste70}*{Proposition~8 in Chapter 5}, we conclude $u_t\in C^\be_x$.

\medskip\noindent\emph{Step 4.} In this step, we show that $D_{nn}u\in C^{\be/2,\be}$. To this end, we recall $U=A^{n\de}D_\de u-g^n$ and use \eqref{eq:pde-par} to have
$$
\frac\al{x_n}U=\partial_tu-D_\g(A^{\g\de}D_\de u-g^\g).
$$
This yields
$$
x_n^{-\al}D_n(x_n^\al U)=\frac{\al}{x_n}U+D_nU=\xi,\quad\text{where }\xi:=\partial_tu-\sum_{\g=1}^{n-1}D_\g(A^{\g\de}D_\de u-g^\g).
$$
Notice that $\xi\in C^{\be/2,\be}$ by the results in the previous steps. By using the conormal boundary condition $x_n^\al U=0$ on $Q'_1$, we deduce 
$$
U(X',x_n)=\frac1{x_n^\al}\int_0^{x_n}\rho^\al \xi(X',\rho)d\rho,
$$
which gives
\begin{align*}
    D_n(A^{n\de}D_\de u-g^n)=D_nU=\xi(X',x_n)-\al\int_0^1\rho^\al \xi(X',\rho x_n)d\rho\in C^{\be/2,\be}.
\end{align*}
This implies $D_{nn}u\in C^{\be/2,\be}$. 
\end{proof}

We now achieve Theorem~\ref{thm:holder-HO-reg} for any $k\in\mathbb{N}$ by using the cases when $k=1,2$ and an induction argument.

\begin{proof}[Proof of Theorem~\ref{thm:holder-HO-reg}]
We assume that the theorem holds true for $1,2,\ldots,k$ for some $k\ge2$, and prove it for $k+1$. To verify that $u\in C^{\frac{k+\be+1}2,k+\be+1}$, it suffices to show
\begin{align*}
    D_{x'}u\in C^{\frac{k+\be}2,k+\be},\quad \partial_tu\in C^{\frac{k+\be-1}2,k+\be-1}\quad \text{and}\quad D_{nn}u\in C^{\frac{k+\be-1}2,k+\be-1}.
\end{align*}  
Since $D_{x'}u$ satisfies \eqref{eq:pde-tan-deriv} with $\bar g^\g=D_{x'}g^\g-D_{x'}A^{\g\de}D_\de u\in C^{\frac{k+\be-1}2,k+\be-1}$, we have by the induction hypothesis $D_{x'}u\in C^{\frac{k+\be}2,k+\be}$. Similarly, as $u_t$ solves
\begin{align}\label{eq:pde-time-deriv}
    \begin{cases}
        D_\de(x_n^\al A^{\g\de}D_\de u_t)-x_n^\al\partial_t u_t=\div(x_n^\al\tilde\bg)\\
        \lim_{x_n\to0+}x_n^\al(A^{n\de}D_\de u_t-\tilde g^n)=0
    \end{cases}\text{in }Q^+_1,
\end{align}
where $\tilde g^\g:=\partial_t g^\g-\partial_tA^{\g\de}D_\de u\in C^{\frac{k+\be-2}2,k+\be-2}$, by the induction hypothesis for $k-1$, we get $u_t\in C^{\frac{k+\be-1}2,k+\be-1}$. Finally, by following the argument in Step 4 in the proof of Theorem~\ref{thm:holder-HO-reg-2}, we obtain $D_{nn}u\in C^{\frac{k+\be-1}2,k+\be-1}$.
\end{proof}

%%%%%%%%%%%%%%%%%%%%%%%%%%%%%%%%%%%%%%%%%%%%%%%%%%%%%

\subsection{Systems with partially DMO coefficients}

In this section, we complete the proof of our central result on Schauder type estimates, Theorem~\ref{thm:DMO-HO-reg-par}. The case $k=1$ is a consequence of Theorems~\ref{thm:DMO-reg}, \ref{thm:sol-time-est}, and \ref{thm:diff-quo-conti}. Once Theorem~\ref{thm:DMO-HO-reg-par} is established for $k=2$, the remaining case when $k\ge3$ easily follows by induction, as shown in the proof of Theorem~\ref{thm:holder-HO-reg}. We will establish the theorem for $k=2$ by using the results when $k=1$ and difference quotient approximation. 

We define $\eta_1, \tilde\eta_1, \hat\eta_1, \bar\eta_1$, and $\omega_{t,1}$ as in \eqref{eq:Dini-tilde}-\eqref{eq:omega-t}, but with $\eta_A$ and $\eta_\bg$ replaced by $\eta_{A,1}$ and $\eta_{\bg,1}$. Similarly, we let $\omega_{x,1}(r)$ be as in \eqref{eq:omega-x} with $\hat\eta_A$ and $\hat\eta_\bg$ replaced by $\hat\eta_{A,1}$ and $\hat\eta_{\bg,1}$. Note that $\omega_{x,1}(r)\lesssim \sigma_1(r)$ and $\omega_{t,1}(r)\lesssim\sigma_1(r^{1/2})$.

\begin{proof}[Proof of Theorem~\ref{thm:DMO-HO-reg-par}]
As mentioned above, it suffices to show the theorem for $k=2$. For this, we need to show $D_{x'}u\in \mathring{C}^{1/2,1}$, $\partial_tu\in \mathring{C}$, $D_{nn}u\in \mathring{C}$ and 
\begin{align}
    \label{eq:D_nu-condition}
    \lim_{h\to0}\frac{\delta_{t,h}D_nu}{h^{1/2}}(t,x)=0\quad\text{and}\quad\frac{\delta_{t,h}D_nu}{h^{1/2}}\in C_X(\overline{Q_1^+}),
\end{align}
uniformly in $(t,x)$ and $h$, respectively.
We divide the proof of these into three steps.

\medskip\noindent\emph{Step 1.} We first prove $D_{x'}u\in \mathring{C}^{1/2,1}$. Recall that $D_{x'}u$ solves \eqref{eq:pde-tan-deriv} with $\bar g^\g=D_{x'}g^\g-D_{x'}A^{\g\de}D_\de u$. Clearly, $D_{x'}g^\g,D_{x'}A^{\g\de}\in\cH$. Moreover, since $u$ is a solution of \eqref{eq:pde-par} and the data $A$ and $\bg$ belong to $C^{1/2,1}$, we have $u\in C^{\frac{1+\be}2,1+\be}$ for any $0<\be<1$ by Theorem~\ref{thm:holder-HO-reg}. Thus, $\bar\bg\in \cH$ by Lemma~\ref{lem:product-par-DMO-elliptic}, and hence we have $D_{x'}u\in \mathring{C}^{1/2,1}$ by the case when $k=1$.

\medskip\noindent\emph{Step 2.} In this step, we prove $\partial_tu\in \mathring{C}$. To this end, we fix $h\in(0,1/4)$ and define $u^h$, $(A^{\g\de})^h$, and $\bg^h$ as before; see \eqref{eq:diff-quo}. Then $u^h$ solves \eqref{eq:pde-diff-quo} with $(\hat g^\g)^h(t,x)=(g^\g)^h(t,x)-(A^{\g\de})^h(t,x)D_\de u(t-h,x)\in\cH$. By Theorem~\ref{thm:sol-time-est}, we have that
$$
\frac{|u^h(t+h,x)-u^h(t,x)|}{h^{1/2}}\le\omega_{t,1}(h),\quad (t+h,x),(t,x)\in Q_{3/4}^+.
$$
This implies
\begin{align}
    \label{eq:u-diff-quotient-time}
    \frac{|u(t+2h,x)-2u(t+h,x)+u(t,x)|}{h}\le \omega_{t,1}(h),\quad 0<h<1/4.
\end{align}
In the rest of Step 2, we fix a point $x\in D_{1/2}^+$ and write for simplicity $u(t,x)=u(t)$. 
It can be directly checked that for any $j\in \mathbb{N}$
\begin{align}\label{eq:diff-quotient}
    u(t+h)-u(t)=\frac{u(t+2^jh)-u(t)}{2^j}-\sum_{i=1}^j\frac{u(t+2^ih)-2u(t+2^{i-1}h)+u(t)}{2^i}.
\end{align}
We take $j\in \mathbb{N}$ satisfying $1/4< 2^jh\le 1/2$. Then \eqref{eq:u-diff-quotient-time} and \eqref{eq:diff-quotient} yield
\begin{align*}
    \frac{|u(t+h)-u(t)|}h&\le \frac{|u(t+2^jh)|+|u(t)|}{2^jh}+\frac12\sum_{i=1}^j\frac{|u(t+2^ih)-2u(t+2^{i-1}h)+u(t)|}{2^{i-1}h}\\
    &\le 8\|u\|_\infty+\frac12\sum_{i=1}^j\omega_{t,1}(2^{i-1}h)\le 8\|u\|_\infty+C\int_0^1\frac{\omega_{t,1}(s)}sds<\infty.
\end{align*}
Since $0<h<1/4$ is arbitrary, this implies that $u$ is Lipschitz in time.

Next, we improve the Lipschitz regularity of $u$ in time. To this end, let $\xi:\R\to[0,\infty)$ be an even and nonnegative smooth function satisfying $\supp\xi\in(-1,1)$ and $\int_{-1}^1\xi(s)\,ds=1$. We consider the convolution of $u$ in time-variable
$$
u_{(h)}(t,x):=\int_{-1}^{1}u(t+hs,x)\xi(s)ds.
$$
Again, we write for simplicity $u_{(h)}(t,x)=u_{(h)}(t)$. By the properties of $\xi$ and \eqref{eq:u-diff-quotient-time},
\begin{align}
    \label{eq:u-conv-diff-est}\begin{split}
    |u_{(h)}(t)-u(t)|&=\left|\int_0^1(u(t+hs)+u(t-hs))\xi(s)ds-\int_0^12u(t)\xi(s)ds \right|\\
    &\le h\int_0^1s\omega_{t,1}(hs)\xi(s)ds\le Ch\omega_{t,1}^*(h),
\end{split}\end{align}
where $\omega_{t,1}^*(h):=\int_0^1\omega_{t,1}(hs)ds$. Moreover, by using the change of variables, the equality $\int_{-1}^1\xi''(s)ds=0$ and \eqref{eq:u-diff-quotient-time}, we obtain
\begin{align*}
    |\partial_{tt}u_{(h)}(t)|&=\left|\int_{-1}^1u(t+hs)\frac{\xi''(s)}{h^2}ds\right|=\left|\int_{-1}^1\left(u(t+hs)-u(t)\right)\frac{\xi''(s)}{h^2}ds\right|\\
    &\le \int_0^1\left|u(t+hs)+u(t-hs)-2u(t)\right|\frac{|\xi''(s)|}{h^2}ds\le C\frac{\omega_{t,1}^*(h)}{h}.
\end{align*}
Let $P_{t_0,h}(t)=a_{t_0,h}+b_{t_0,h}(t-t_0)$ be the first-order Taylor expansion of $u_{(h)}$ at $t_0$. Notice that $a_{t_0,h}=u_{(h)}(t_0)$, $b_{t_0,h}=\partial_tu_{(h)}(t_0)$ and $|b_{t_0,h}|\le \|\partial_tu\|_\infty$. Then the above estimate on $\partial_{tt}u_{(h)}$ gives
\begin{align}
    \label{eq:u-Tay-diff-est}
    \|u_{(h)}-P_{t_0,h}\|_{L^\infty((t_0-h,t_0+h))}\le Ch\omega_{t,1}^*(h).
\end{align}
By combining this and \eqref{eq:u-conv-diff-est}, we get
$$
\|u-P_{t_0,h}\|_{L^\infty((t_0-h,t_0+h))}\le Ch\omega_{t,1}^*(h).
$$
This implies that $\partial_tu$ is continuous in time with the modulus of continuity bounded by $C\int_0^r\frac{\omega_{t,1}^*(s)}sds$. A simple computation, along with Lemma~\ref{lem:Dini-sum-est}, shows 
$$
\int_0^r\frac{\omega_{t,1}^*(s)}sds\lesssim\int_0^r\frac{\omega_{t,1}(s)}sds\lesssim\sigma_1(r^{1/2}).
$$

To show that $u_t$ is continuous in $x'$, we let $x=(x',x_n)$ and $y=(y',x_n)$ with $h:=|x'-y'|>0$. Then
\begin{align}\label{eq:u_t-conti-x'}\begin{split}
    &|u_t(t,x',x_n)-u_t(t,y',x_n)|\\
    &\le \left|\frac{u(t+h^2,x)-u(t,x)}{h^2}-u_t(t,x)\right|+\left|\frac{u(t+h^2,y)-u(t,y)}{h^2}-u_t(t,y)\right|\\
    &\qquad+\frac{|u(t+h^2,x)-u(t+h^2,y)-u(t,x)+u(t,y)|}{h^2}\\
    &\lesssim\sigma_1(h)+\int_0^1\frac{|D_{x'}u(t+h^2,(1-s)x+sy)-D_{x'}u(t,(1-s)x+sy)|}hds\\
    &\lesssim \sigma_1(h),
\end{split}\end{align}
where we used that $\partial_tu$ is continuous in $t$ with the modulus of continuity bounded by $\sigma_1(r^{1/2})$ in the second step, and applied Theorem~\ref{thm:sol-time-est} to $D_{x'}u$ and used $\omega_{t,1}(r)\lesssim\sigma_1(r^{1/2})$ in the last step.

\medskip\noindent\emph{Step 3.} In this step we prove $D_{nn}u\in \mathring{C}$ and \eqref{eq:D_nu-condition}. Thanks to the results in Steps 1 and 2, $D_{nn}u\in \mathring{C}$ can be obtained by following the argument in Step 4 in the proof of Theorem~\ref{thm:holder-HO-reg-2}. Thus, it remains to prove \eqref{eq:D_nu-condition}. For any $0<h<1/4$, by applying Theorem~\ref{thm:DMO-reg} to $u^h$, we have
$$
|D_nu^h(t+h)-D_nu^h(t)|\le \omega_{x,1}(h^{1/2}).
$$
We take $j\in\mathbb{N}$ such that $\frac{h^{-\frac{1+\be}2}}2\le 2^j<h^{-\frac{1+\be}2}$. By using the previous estimate and the equality \eqref{eq:diff-quotient} applied to $D_nu$, we get
\begin{align*}
    &\frac{|D_nu(t+h)-D_nu(t)|}{h^{1/2}}\\
    &\le \frac{|D_nu(t+2^jh)|+|D_nu(t)|}{2^jh^{1/2}}+\sum_{i=1}^j\frac{|D_nu(t+2^ih)-2D_nu(t+2^{i-1}h)+D_nu(t)|}{2^{i}h^{1/2}}\\
    &\le 4\|D_nu\|_\infty h^{\be/2}+\sum_{i=1}^j\frac{|D_nu^{2^{i-1}h}(t+2^ih)-D_nu^{2^{i-1}h}(t+2^{i-1}h)|}{2^{\frac{i+1}2}}\\
    &\lesssim \|\D u\|_\infty h^{\be/2}+\sum_{i=1}^j\frac{\omega_{x,1}(2^{i/2}h^{1/2})}{2^{i/2}}\lesssim\|\D u\|_\infty h^{\be/2}+\int_{2^{1/2}}^{h^{-\frac{1+\be}4}}\frac{\omega_{x,1}(\rho h^{1/2})}{\rho^2}d\rho.
\end{align*}
To estimate the last term, we let
\begin{align*}
    \eta^\star_1(r):=\left(\|u\|_{1,\mu}+\|\bg\|_\infty+\int_0^1\frac{\eta_\bg(\tau)}\tau d\tau\right)(\eta_{A,1}(r)+r^{\be'})+\eta_{\bg,1}(r),
\end{align*}
and apply Lemma~\ref{lem:Dini-sum-est} and Fubini's theorem to have
\begin{align*}
    &\int_{2^{1/2}}^{h^{-\frac{1+\be}4}}\frac{\omega_{x,1}(\rho h^{1/2})}{\rho^2}d\rho\\
    &\lesssim \int_1^{h^{-\frac{1+\be}4}}\frac1{\rho^2}\left(\int_0^{\rho h^{1/2}}\frac{\eta_1^\star(s)}sds+\rho^\be h^{\be/2}\int_{\rho h^{1/2}}^1\frac{\eta_1^\star(s)}{s^{1+\be}}ds\right)d\rho\\
    &\lesssim \left(\int_0^{h^{1/2}}\int_1^{h^{-\frac{1+\be}4}}\frac{\eta_1^\star(s)}{\rho^2s}d\rho ds+\int_{h^{1/2}}^1\int_{h^{-1/2}s}^{h^{-\frac{1+\be}4}}\frac{\eta_1^\star(s)}{\rho^2 s}d\rho ds\right)\\
    &\quad +\int_{h^{1/2}}^1\int_1^{\infty}\frac{h^{\be/2}\eta_1^\star(s)}{\rho^{2-\be}s^{1+\be}}d\rho ds\\
&\lesssim\int_0^{h^{1/2}}\frac{\eta_1^\star(s)}sds+h^{1/2}\int_{h^{1/2}}^1\frac{\eta_1^\star(s)}{s^2}ds+h^{\be/2}\int_{h^{1/2}}^1\frac{\eta_1^\star(s)}{s^{1+\be}}ds.
\end{align*}
Here, we have
$$
h^{1/2}\int_{h^{1/2}}^1\frac{\eta_1^\star(s)}{s^2}ds\le h^{1/2}\int_{h^{1/2}}^1\frac{\eta_1^\star(s)}{s^{1+\be}}\left(\frac1{h^{1/2}}\right)^{1-\be}ds=h^{\be/2}\int_{h^{1/2}}^1\frac{\eta_1^\star(s)}{s^{1+\be}}ds.
$$
By combining the above three estimates, we get
$$
\frac{|\delta_{t,h}D_nu|}{h^{1/2}}\lesssim \sigma_1(h^{1/2})\quad\text{uniformly in }(t,x).
$$
By using
this estimate and the previous result $D_{x'}u\in \mathring{C}^{1/2,1}$ and $D_{nn}u\in \mathring{C}$, which implies $D_nD_x'u, D_{nn}u\in C_t$, we can follow the argument in Theorem~\ref{thm:diff-quo-conti} to conclude $\frac{\delta_{t,h}D_nu}{h^{1/2}}\in C_{X}(\overline{Q_1^+})$ uniformly in $h$.
\end{proof}

Before closing this section, we prove Remark~\ref{rem:DMO-HO-reg-par} and improve the result in Theorem~\ref{thm:DMO-HO-reg-par}.

\begin{proof}[Proof of Remark~\ref{rem:DMO-HO-reg-par}]
We only prove it for the cases $k=1$ and $k=2$, as the result for $k\ge3$ follows by induction. For $k=1$, we need to show $D_{x'}u$ is continuous in $x_n$. This can be simply inferred from \eqref{eq:mod-conti-space-est}.

When $k=2$, we need to show that $\partial_tu$ and $DD_\g u$ are continuous in $x_n$, $1\le\g\le n-1$. Indeed, by using $D_nu\in \mathring{C}^{1/2,1}$, we can follow the argument as in obtaining $\partial_tu\in C_{x'}$ (see \eqref{eq:u_t-conti-x'}) to derive $\partial_tu\in C_{x_n}$. To prove $DD_\g u\in C_{x_n}$, we use $D^2u\in C_{x_\g}$ to have for any $0<h<1/4$
\begin{align*}
    &\frac{|\delta_{x_\g,h}Du(t,x)-\delta_{x_\g,h}Du(t,x-he_n)|}{h}=\frac{|\de_{x_n,h}Du(t,x)-\delta_{x_n,h}Du(t,x-he_\g)|}{h}\\
    &\le \int_0^1|DD_nu(t,x-\tau he_n)-DD_nu(t,x-he_\g-\tau he_n)|d\tau\lesssim \sigma_1(h).
\end{align*}
By using this estimate, together with the triangle inequality and $DD_\g u\in C_{x_\g}$, we conclude
\begin{equation*}
    |D D_\g u(t,x)-D D_\g u(t,x-he_n)|\lesssim \sigma_1(h).\qedhere
\end{equation*}
\end{proof}

%%%%%%%%%%%%%%%%%%%%%%%%%%%%%%%%%%%%%%%%%%%%%%%%%%%%%%%%%%%%%%%%%%

\section{Boundary Harnack Principle}\label{sec:BHP}

In this section, we establish the parabolic boundary Harnack principle for Hölder continuous coefficients (Theorem~\ref{thm:par-BHP}) by utilizing the results derived in the previous sections.

\begin{proof}[Proof of Theorem~\ref{thm:par-BHP}]
To begin with, by the classical boundary Schauder estimate, we have that $u,v\in C^{\frac{k+\be}2,k+\be}(\overline{Q^+_{3/4}})$, which gives
$$
\frac{u(X)}{x_n}=\frac1{x_n}\int_0^{x_n}\partial_nu(X',\rho)d\rho=\int_0^1\partial_nu(X',x_n\tau)d\tau\in C^{\frac{k-1+\be}2,k-1+\be}(\overline{Q^+_{3/4}}).
$$
Similarly, we have $v/x_n\in C^{\frac{k-1+\be}2,k-1+\be}$. Moreover, the condition $\partial_{n}u>0$ on $Q_1'$ implies $u/x_n\ge c>0$ in $Q_{3/4}^+$, and hence $w:=v/u$ satisfies
$$
\|w\|_{L^\infty(Q^+_{3/4})}\le \|v/x_n\|_{L^\infty(Q^+_{3/4})}\|(u/x_n)^{-1}\|_{L^\infty(Q_{3/4}^+)}\le C.
$$
We observe that by the symmetry of $A$,
\begin{align*}
    \div(u^2A\D w)&=\div(uA\D v-vA\D u)=(u\partial_tv-v\partial_tu)+(ug-vf)\\
    &=u^2\partial_tw+(ug-vf).
\end{align*}
This gives
\begin{align*}
    \div(x_n^2A\D w)&=\div((u/x_n)^{-2}u^2A\D w)\\
    &=(u/x_n)^{-2}\div(u^2A\D w)+\mean{\D((u/x_n)^{-2}),u^2A\D w}\\
    &=x_n^2\partial_tw+(u/x_n)^{-2}(ug-vf)+2x_n\mean{\vec e_n-(u/x_n)^{-1}\D u,A\D w}\\
    &=x_n^2\partial_tw+x_n\tilde g,
\end{align*}
where
\begin{align}
    \label{eq:rhs-w}
    \tilde g:=(u/x_n)^{-1}g-(u/x_n)^{-2}(v/x_n)f+\mean{2A(\vec e_n-(u/x_n)^{-1}\D u),\D w}.
\end{align}
We write $\tilde g=\tilde g_1+\mean{\bb,\D w}$, where $\tilde g_1=(u/x_n)^{-1}g-(u/x_n)^{-2}(v/x_n)f$ and $\bb=2A\left(\vec e_n-(u/x_n)^{-1}\D u\right)$. We further have
\begin{align}\label{eq:pde-w}
\div(x_n^2A\D w)-x_n^2\partial_tw=\div(x_n^2\tilde{\bg}),
\end{align}
where 
$$
\tilde\bg(X',x_n):=\frac{\vec e_n}{x_n^2}\int_0^{x_n}\rho \tilde g(X',\rho)d\rho=\vec e_n\int_0^1\tau \tilde g(X',\tau x_n)d\tau.
$$
We now split the remainder of the proof into three steps.

\medskip\noindent\emph{Step 1.} In this step, we establish the theorem when $k=1$. As in Section~\ref{subsec:reg-space}, we will derive an a priori estimate of the Hölder continuity of $\D w$ under the assumption $\D w\in C^{\be/2,\be}(\overline{Q_{3/4}^+})$. The general case follows from a standard approximation argument. Note that $[\tilde\bg]_{C^{\be/2,\be}(\overline{Q_r^+(X_0)})}\le [\tilde g]_{C^{\be/2,\be}(\overline{Q_r^+(X_0)})}<\infty$ whenever $X_0\in Q'_1$ and $Q_r^+(X_0)\subset Q^+_{3/4}$. Then, by following the arguments in Lemmas~\ref{lem:psi-bdry-est} - \ref{lem:psi-est} (see also \cite{Don12}*{Section 5}, \cite{DonEscKim21}*{Section 3.2}) with 
$$
\tilde\psi(X_0,r):=\left(\dashint_{Q_r^+(X_0)}|\D w-\mean{\D w}_{Q_r^+(X_0)}|^2d\tilde\mu\right)^{1/2},\quad X_0\in Q^+_{3/4},\,\, 0<r<1/4,
$$
where $d\tilde\mu=x_n^2 dX$, instead of $\psi$, we obtain an analogue of \eqref{eq:psi-est}: for every $X_0\in Q^+_{3/4}$ and $0<\rho<r<1/4$
\begin{align*}
    \tilde\psi(X_0,\rho)&\le C(\rho/r)^\be\|\D w\|_{L^\infty(Q^+_{5r}(X_0))}\\
    &\qquad+C\left(\|\D w\|_{L^\infty(Q_{8r}^+(X_0))}[A]_{C^{\be/2,\be}(\overline{Q_{8r}^+(X_0)})}+[\tilde\bg]_{C^{\be/2,\be}(\overline{Q^+_{8r}(X_0)})}\right)\rho^\be,
\end{align*}
where $C=C(n,\la,\be)>0$. This implies that for any $0<r_0<R_0<3/4$, $X\in Q^+_{r_0}$ and $0<\rho<\frac{R_0-r_0}{16}$,
$$
\tilde\psi(X,\rho)\le CM\rho^\be,\quad M:=\frac{\|\D w\|_{L^\infty(Q_{R_0}^+)}}{(R_0-r_0)^\be}+[\tilde g]_{C^{\be/2,\be}(\overline{Q^+_{R_0})}}
$$
for some constant $C=C(n,\la,\be,[A]_{C^{\be/2,\be}(\overline{Q^+_{3/4}})})>0$. By the standard Campanato space embedding (see e.g. the proof of Theorem~3.1 in \cite{HanLin97}), we have
\begin{align}
    \label{eq:grad-w-holder}\begin{split}
    [\D w]_{C^{\be/2,\be}(\overline{Q_{r_0}^+})}&\le C\left(M+\frac{\|\D w\|_{L^\infty(Q^+_{R_0})}}{(R_0-r_0)^\be}\right)\\
    &\le C\left(\frac{\|\D w\|_{L^\infty(Q^+_{R_0})}}{(R_0-r_0)^\be}+[\tilde g]_{C^{\be/2,\be}(\overline{Q^+_{R_0}})}\right).
\end{split}\end{align}
Recall \eqref{eq:rhs-w}. By using $u/x_n\ge c>0$ in $Q_{3/4}^+$, $u=0$ on $Q_1'$, $b\in C^{\be/2,\be}(\overline{Q_{3/4}^+})$, and $|(u/x_n-\partial_nu)(X',x_n)|\le \int_0^1|\partial_nu(X',x_n\tau)-\partial_nu(X',x_n)|d\tau\le Cx_n^\be$, we get
$$
[\tilde g]_{C^{\be/2,\be}(\overline{Q^+_{R_0}})}\le [\tilde g_1]_{C^{\be/2,\be}(\overline{Q_{3/4}^+})}+C\|\D w\|_{L^\infty(Q_{R_0}^+)}+CR_0^\be[\D w]_{C^{\be/2,\be}(\overline{Q_{R_0}^+})}.
$$
Combining this with \eqref{eq:grad-w-holder} and using the interpolation inequality yield that for every $0<r_0<R_0<3/4$
\begin{align*}
    [\D w]_{C^{\be/2,\be}(\overline{Q_{r_0}^+})}&\le C\left(\frac{\|\D w\|_{L^\infty(Q_{R_0}^+)}}{(R_0-r_0)^\be}+[\tilde g_1]_{C^{\be/2,\be}(\overline{Q_{3/4}^+})}\right)+CR_0^\be[\D w]_{C^{\be/2,\be}(\overline{Q_{R_0}^+})}\\
    &\le C\left(\frac{\|w\|_{L^\infty(Q_{3/4}^+)}}{(R_0-r_0)^{1+\be}}+[\tilde g_1]_{C^{\be/2,\be}(\overline{Q_{3/4}^+})}\right)+CR_0^\be[\D w]_{C^{\be/2,\be}(\overline{Q_{R_0}^+})}.
\end{align*}
We fix $S_0\in (0,3/4)$ so that $CS_0^\be<1/2$ and apply \cite{Gia83}*{Lemma~3.1 of Ch.V} to deduce
$$
[\D w]_{C^{\be/2,\be}(\overline{Q^+_{S_0/2}})}\le C\left(\|w\|_{L^\infty(Q_{3/4}^+)}+[\tilde g_1]_{C^{\be/2,\be}(\overline{Q_{3/4}^+})}\right).
$$
By varying the centers $X_0'\in Q'_1$, we have the similar estimate for $$[\D w]_{C^{\be/2,\be}(\overline{Q_{2/3}^+}\cap\{x_n<S_0/2\})}.$$

Moreover, since $w$ satisfies 
$$
\div(x_n^2A\D w)-x_n^2\partial_tw=x_n\tilde g_1+\mean{x_n\bb,\D w}
$$ 
in $\cQ:=\overline{Q_{2/3}^+}\cap\{x_n>S_0/4\}$ and $0<c<x_n^2<C$ in $\cQ$, we can estimate $\|\D w\|_{C^{\be/2,\be}(\cQ)}$ by following the argument in \cite{Don12}*{Theorem~2}. Therefore, $\D w\in C^{\be/2,\be}(\overline{Q^+_{2/3}})$. Then, $w$ can be seen as a solution of \eqref{eq:pde-w} in $Q_{2/3}^+$ with $\tilde\bg\in C^{\be/2,\be}(\overline{Q_{2/3}^+})$. By Theorem~\ref{thm:holder-HO-reg}, we conclude $w\in C^{\frac{1+\be}2,1+\be}(\overline{Q_{1/2}^+})$.

\medskip\noindent\emph{Step 2.} In this step, we prove Theorem~\ref{thm:par-BHP} for $k=2$. It is enough to show $\D_{x'}w\in C^{\frac{1+\be}2,1+\be}$, $D_nw\in C^{\frac{1+\be}2}_t$, $\partial_tw\in C^{\be/2,\be}$ and $D_{nn}w\in C^{\be/2,\be}$. From \eqref{eq:pde-w}, we see that $\D_{x'}w$ satisfies
$$
\div(x_n^2 A\D(\D_{x'}w))-x_n^2\partial_t(\D_{x'}w)=\div(x_n^2(\D_{x'}\tilde\bg-\D_{x'}A\D w)).
$$
Clearly, $\D_{x'}A,\D w\in C^{\be/2,\be}$. Moreover, a direct computation gives
\begin{align*}
    \D_{x'}\tilde\bg=\vec{e}_n\int_0^1\tau(\D_{x'}\tilde g_1+\mean{\D_{x'}\bb,\D w}+\mean{\bb,\D(\D_{x'}w)})(X',\tau x_n)d\tau.
\end{align*}
Since $\D_{x'}\tilde g_1+\mean{\D_{x'}\bb,\D w}\in C^{\be/2,\be}$, we obtain by Step 1 that $\D_{x'}w\in C^{\frac{1+\be}2,1+\be}$.

Next, we observe that $w^h(t,x)=\frac{w(t,x)-w(t-h,x)}{h^{1/2}}$ solves
$$
\div(x_n^2 A\D w^h)-x_n^2\partial_tw^h=\div(x_n^2\hat\bg^h)\quad\text{in }Q_1^+,
$$
where $\hat\bg^h(X)=\tilde\bg^h(X)-A^h(X)\D w(t-h,x)$. A direct calculation shows
$$
\tilde\bg^h(X)=\vec{e}_n\int_0^1\tau((\mean{\bb,\D w^h}+\tilde g_1^h)(X',\tau x_n)+\mean{\bb^h(X',\tau x_n),\D w(t-h,x',\tau x_n)})d\tau.
$$
Since $A^h, \D w, \tilde g_1^h, \bb^h\in C^{\be/2,\be}$, we have by the result in Step 1 that $w^h\in C^{\frac{1+\be}2,1+\be}$. This is an analogue of \eqref{eq:u-h-holder}, and we saw in Step 3 in the proof of Theorem~\ref{thm:holder-HO-reg-2} that $w^h\in C^{\frac{1+\be}2,1+\be}$ implies $D_nw\in C^{\frac{1+\be}2}_t$ and $\partial_tw\in C^{\be/2,\be}$. 

It remains to show $D_{nn}w\in C^{\be/2,\be}$. Recall that 
$$
\div(x_n^2A\D w)=x_n^2\partial_tw+x_n\tilde g=x_n^2\partial_tw+x_n\tilde g_1+x_n\mean{\bb,\D w}.
$$ 
Then, for $W:=\mean{A\D w,\vec{e}_n}-\frac{\tilde g_1}2\in C^{\be/2,\be}(\overline{Q_{1/2}^+})$, a direct computation gives
$$
x_n^{-2}\partial_n(x_n^2W)=-\sum_{i=1}^{n-1}\partial_i(\mean{A\D w,\vec{e}_i})-\frac{\partial_n\tilde g_1}2+\partial_tw+\mean{\bb/x_n,\D w}=:\bar f.
$$
It follows that 
$$
W=\frac1{x_n^2}\int_0^{x_n}\rho^2\bar f(X',\rho)d\rho,
$$
which yields 
$$
D_nW=\bar f-2\int_0^1\rho^2\bar f(X',x_n\rho)d\rho.
$$
From
$$
\frac{\D_{x'}u(X',x_n)}{x_n}=\int_0^1D_n\D_{x'}u(X',x_n\rho)d\rho\in C^{\be/2,\be}
$$ 
and 
$$
\frac1{x_n}(u(X)/{x_n}-D_nu(x))=-\int_0^1\rho D_{nn}u(X',x_n\rho)d\rho\in C^{\be/2,\be},
$$ 
we find $\bb/x_n\in C^{\be/2,\be}$, and thus $\bar f\in C^{\be/2,\be}$. Then $D_nW\in C^{\be/2,\be}$, and hence $D_{nn}w\in C^{\be/2,\be}$.

\medskip\noindent\emph{Step 3.} Once we have Theorem~\ref{thm:par-BHP} for $k=1,2$, we can use induction as in the proof of Theorem~\ref{thm:holder-HO-reg} to achieve Theorem~\ref{thm:par-BHP} for $k\ge3$.
\end{proof}

%%%%%%%%%%%%%%%%%%%%%%%%%%%%%%%%%%%%%%%%%%%%%%%%%%%%%%%

\appendix
\section{Properties of some Dini functions}

\begin{lemma}
    \label{lem:Dini-sum-est}
    Let $\sigma:[0,1]\to[0,\infty)$ be a Dini function, $0<\ka<1$ and $0<\be<1$. For $\be'=\frac{1+\be}2$, we set
    $$
    \tilde\sigma(r):=\sum_{j=0}^\infty\ka^{\be' j}\sigma(\ka^{-j}r)[\ka^{-j}r\le1],\quad \hat\sigma(r):=\sup_{\rho\in[r,1]}(r/\rho)^{\be'}\tilde\sigma(r).
    $$
    Then
    \begin{align*}
            \int_0^r\frac{\hat\sigma(\rho)}\rho d\rho\lesssim \int_0^r\frac{\sigma(\rho)}\rho d\rho+r^\be\int_r^1\frac{\sigma(\rho)}{\rho^{1+\be}}d\rho.
    \end{align*}
\end{lemma}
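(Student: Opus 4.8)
The plan is to prove the estimate by unwinding the definitions of $\hat\sigma$ and $\tilde\sigma$ and exchanging orders of summation and integration, exploiting the fact that $r\longmapsto r^{-\be'}\hat\sigma(r)$ is nonincreasing by construction.

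First I would record the elementary observation that, since $\be<\be'$, one has $\int_0^r\frac{\hat\sigma(\rho)}{\rho}d\rho = \int_0^r \rho^{\be'-1}\cdot\frac{\hat\sigma(\rho)}{\rho^{\be'}}d\rho$, and because $\rho^{-\be'}\hat\sigma(\rho)$ is nonincreasing while also $\hat\sigma\ge\tilde\sigma$, it suffices to control $\int_0^r\frac{\tilde\sigma(\rho)}{\rho}d\rho$ after first reducing $\hat\sigma$ to $\tilde\sigma$. More precisely, from $\hat\sigma(\rho)=\sup_{s\in[\rho,1]}(\rho/s)^{\be'}\tilde\sigma(s)$ one gets $\int_0^r\frac{\hat\sigma(\rho)}{\rho}d\rho\lesssim \int_0^r\frac{\tilde\sigma(\rho)}{\rho}d\rho + r^{\be'}\int_r^1\frac{\tilde\sigma(s)}{s^{1+\be'}}\,ds$ by the standard splitting argument (see e.g. \cite{Don12}); I would carry this out by writing $\hat\sigma(\rho)\le \tilde\sigma(\rho)+\rho^{\be'}\sup_{s\ge\rho}s^{-\be'}\tilde\sigma(s)$ and integrating. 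Then it remains to bound $\int_0^r\frac{\tilde\sigma(\rho)}{\rho}d\rho$ and $r^{\be'}\int_r^1\frac{\tilde\sigma(s)}{s^{1+\be'}}ds$ in terms of the right-hand side.

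Next I would substitute the series definition $\tilde\sigma(\rho)=\sum_{j\ge0}\ka^{\be' j}\sigma(\ka^{-j}\rho)[\ka^{-j}\rho\le1]$ and interchange sum and integral (everything is nonnegative, so Tonelli applies). For the first term, $\int_0^r\frac{\tilde\sigma(\rho)}{\rho}d\rho = \sum_{j\ge0}\ka^{\be' j}\int_0^{\min(r,\ka^j)}\frac{\sigma(\ka^{-j}\rho)}{\rho}d\rho$, and the change of variables $\tau=\ka^{-j}\rho$ turns this into $\sum_{j\ge0}\ka^{\be' j}\int_0^{\min(\ka^{-j}r,1)}\frac{\sigma(\tau)}{\tau}d\tau$. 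Splitting the $\tau$-integral at $\tau=r$ (distinguishing the $j$ with $\ka^{-j}r\le r$, which only $j=0$, from larger $j$) and summing the geometric factors $\ka^{\be' j}$ gives a contribution $\lesssim \int_0^r\frac{\sigma(\tau)}{\tau}d\tau + \sum_{j\ge1}\ka^{\be' j}\int_r^{\min(\ka^{-j}r,1)}\frac{\sigma(\tau)}{\tau}d\tau$; in the last sum, for fixed $\tau\in(r,1)$ the indices $j$ contributing are those with $\ka^{-j}r\ge\tau$, i.e. $\ka^j\le r/\tau$, so $\ka^{\be' j}\le(r/\tau)^{\be'}$ for the largest such $j$ and the geometric sum over smaller $j$ is comparable to this; after exchanging the order of summation and integration this yields $\lesssim r^{\be'}\int_r^1\frac{\sigma(\tau)}{\tau^{1+\be'}}d\tau$. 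A similar (slightly easier) computation handles $r^{\be'}\int_r^1\frac{\tilde\sigma(s)}{s^{1+\be'}}ds$. Finally, since $\be'>\be$, one has $r^{\be'}\int_r^1\frac{\sigma(\tau)}{\tau^{1+\be'}}d\tau = r^{\be'}\int_r^1\frac{\sigma(\tau)}{\tau^{1+\be}}\tau^{\be-\be'}d\tau\le r^{\be'}\cdot r^{\be-\be'}\int_r^1\frac{\sigma(\tau)}{\tau^{1+\be}}d\tau = r^{\be}\int_r^1\frac{\sigma(\tau)}{\tau^{1+\be}}d\tau$, which is exactly the second term on the right-hand side; combining everything gives the claimed bound.

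The main obstacle I anticipate is bookkeeping rather than anything deep: correctly tracking the truncation brackets $[\ka^{-j}\rho\le1]$ through the changes of variables, and then justifying the interchange of the $j$-sum with the $\tau$-integral and reorganizing the resulting double sum/integral into the two clean pieces $\int_0^r\sigma/\rho$ and $r^\be\int_r^1\sigma/\rho^{1+\be}$. One has to be a little careful that when $\ka^{-j}r>1$ the inner integral only runs up to $1$, not $\ka^{-j}r$, but this only helps (it shrinks the domain), so the estimates go through. All constants depend only on $\ka$ and $\be$, consistent with the statement.
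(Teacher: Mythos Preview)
Your expansion of $\tilde\sigma$ into a geometric series in $\sigma$, followed by the change of variables $\tau=\ka^{-j}\rho$ and the Fubini/swap of summation and integration, is exactly what the paper does in its second half, and your final step $r^{\be'}\int_r^1\sigma\,\tau^{-1-\be'}d\tau\le r^{\be}\int_r^1\sigma\,\tau^{-1-\be}d\tau$ is also the same closing observation. So that part is correct and matches the paper.

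The gap is in your first step, the passage from $\hat\sigma$ to $\tilde\sigma$. Your proposed pointwise inequality
\[
\hat\sigma(\rho)\le \tilde\sigma(\rho)+\rho^{\be'}\sup_{s\ge\rho}s^{-\be'}\tilde\sigma(s)
\]
is vacuous: the second term on the right is $\hat\sigma(\rho)$ itself, so this reads $\hat\sigma\le\tilde\sigma+\hat\sigma$ and cannot be integrated to yield your claimed bound. Even if you split the supremum at $s=r$ instead, the piece $\int_0^r\rho^{-1}\sup_{s\in[\rho,r]}(\rho/s)^{\be'}\tilde\sigma(s)\,d\rho$ is of the same type as the original integral and is not obviously controlled by $\int_0^r\tilde\sigma(\rho)/\rho\,d\rho$. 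Your claimed integrated inequality is in fact true, but it requires a genuine argument (e.g.\ bounding $\sup_{s\ge\rho}s^{-\be'}\tilde\sigma(s)\lesssim\int_{\rho/2}^1\tilde\sigma(t)\,t^{-1-\be'}dt$ via almost--monotonicity of $\tilde\sigma$ and then applying Fubini), not the splitting you wrote.

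The paper avoids this issue entirely by working pointwise rather than in integrated form: it discretizes the supremum defining $\hat\sigma$ to get $\hat\sigma(r)\lesssim\sum_{j\ge0}\ka^{\be'j}\tilde\sigma(\ka^{-j}r)[\ka^{-j}r\le1]$ (this is where almost--monotonicity enters, via \cite{DonEscKim18}*{(2.30)}), then substitutes the definition of $\tilde\sigma$ and uses the convolution identity $\sum_{i+k=j}\ka^{\be'i}\ka^{\be'k}=(j+1)\ka^{\be'j}$ together with $j\ka^{\be'j}\lesssim\ka^{\be j}$ (since $\be<\be'$) to obtain directly
\[
\hat\sigma(r)\lesssim\sum_{j\ge0}\ka^{\be j}\sigma(\ka^{-j}r)[\ka^{-j}r\le1].
\]
This pointwise bound with exponent $\be$ (not $\be'$) is then integrated and split exactly as you describe, arriving at the two terms on the right-hand side without any further reduction step. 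This is both shorter and cleaner than going through $\tilde\sigma$ as an intermediary.
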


\begin{proof}
By using \cite{DonEscKim18}*{(2.30)}, we have
\begin{align*}
    \hat\sigma(r)&\lesssim\sum_{j=0}^\infty\ka^{\be'j}\tilde\sigma(\ka^{-j}r)[\ka^{-j}r\le1]\lesssim \sum_{j=0}^\infty\ka^{\be'j}\sum_{i=0}^\infty\ka^{\be'i}\sigma(\ka^{-(j+i)}r)[\ka^{-(j+i)}r\le1]\\
    &\lesssim\sum_{j=0}^\infty j\ka^{\be'j}\sigma(\ka^{-j}r)[\ka^{-j}r\le1]\lesssim\sum_{j=0}^\infty\ka^{\be j}\sigma(\ka^{-j}r)[\ka^{-j}r\le1].
\end{align*}
It follows that
\begin{align*}
    &\int_0^r\frac{\hat\sigma(\rho)}\rho d\rho=\sum_{j=0}^\infty\ka^{\be j}\int_0^{\ka^{-j}r}\frac{\sigma(\rho)}\rho[\rho\le1]d\rho\\
    &=\sum_{j=0}^\infty\ka^{\be j}\int_0^r\frac{\sigma(\rho)}\rho d\rho+\sum_{j=0}^\infty\ka^{\be j}\sum_{i=0}^{j-1}\int_{\ka^{-i}r}^{\ka^{-(k+1)}r}\frac{\sigma(\rho)}\rho[\rho\le1]d\rho\\
    &\lesssim \int_0^r\frac{\sigma(\rho)}\rho d\rho+\sum_{i=0}^\infty\sum_{j=i}^\infty\ka^{\be j}\int_{\ka^{-i}r}^{\ka^{-(i+1)}r}\frac{\sigma(\rho)}\rho[\rho\le1]d\rho\\
    &\lesssim \int_0^r\frac{\sigma(\rho)}\rho d\rho+\sum_{i=0}^\infty \ka^{\be i}\int_{\ka^{-i}r}^{\ka^{-(i+1)}r}\frac{\sigma(\rho)}\rho[\rho\le1]d\rho\\
    &\lesssim \int_0^r\frac{\sigma(\rho)}\rho d\rho+\sum_{i=0}^\infty r^{\be}\int_{\ka^{-i}r}^{\ka^{-(i+1)}r}\frac{\sigma(\rho)}{\rho^{1+\be}}[\rho\le1]d\rho\lesssim \int_0^r\frac{\sigma(\rho)}\rho d\rho+r^{\be}\int_r^1\frac{\sigma(\rho)}{\rho^{1+\be}}d\rho.\qedhere
\end{align*}
\end{proof}

%%%%%%%%%%%%%%%%%%%%%%%%%%%%%%%%%%%%%%%%%%%%%%%%%%%%%%%%%%%%%%%%%%%%%%%%%%%%%%%

\begin{bibdiv}
\begin{biblist}

\bib{AudFioVit24a}{article}{
   author={Audrito, Alessandro},
   author={Fioravanti, Gabriele},
   author={Vita, Stefano},
   title={Schauder estimates for parabolic equations with degenerate or singular weights},
   journal={Calc. Var. Partial Differential Equations},
   volume={63},
   date={2024},
   number={8},
   pages={Paper No. 204},
   issn={0944-2669,1432-0835},
   review={\MR{4788276}},
   doi={10.1007/s00526-024-02809-2},
 }

 \bib{AudFioVit24b}{article}{
   author={Audrito, Alessandro},
   author={Fioravanti, Gabriele},
   author={Vita, Stefano},
   title={Higher order Schauder estimates for degenerate or singular parabolic equations},
   journal={Rev. Mat. Iberoam.},
   volume={41},
   date={2025},
   number={4},
   pages={1513–1554},
   issn={0213-2230,2235-0616},
   review={\MR{4912928}},
   doi={10.4171/rmi/1540},
 }

\bib{BanGar16}{article}{
   author={Banerjee, Agnid},
   author={Garofalo, Nicola},
   title={A parabolic analogue of the higher-order comparison theorem of De
   Silva and Savin},
   journal={J. Differential Equations},
   volume={260},
   date={2016},
   number={2},
   pages={1801--1829},
   issn={0022-0396},
   review={\MR{3419746}},
   doi={10.1016/j.jde.2015.09.044},
}

 \bib{BeDo25}{article}{
   author={Bekmaganbetov, Bekarys},
   author={Dong, Hongjie},
   title={Singular-degenerate parabolic systems with the conormal boundary condition on the upper half space},
   pages={30},
   date={2025},
   status={arXiv:2509.18418},
 }

\bib{CafSil07}{article}{
   author={Caffarelli, Luis},
   author={Silvestre, Luis},
   title={An extension problem related to the fractional {L}aplacian},
   journal={Comm. Partial Differential Equations},
   volume={32},
   date={2007},
   number={7-9},
   pages={1245--1260},
   issn={0360-5302},
   review={\MR{2354493}},
   doi={10.1080/03605300600987306},
   }

\bib{ChoKimLee20}{article}{
   author={Choi, Jongkeun},
   author={Kim, Seick},
   author={Lee, Kyungrok},
   title={Gradient estimates for elliptic equations in divergence form with partial {D}ini mean oscillation coefficients},
   journal={J. Korean Math. Soc.},
   volume={57},
   date={2020},
   number={6},
   pages={1509--1533},
   issn={0304-9914},
   review={\MR{4169354}},
   doi={10.4134/JKMS.j190777},
   }

\bib{Chr90}{article}{
   author={Christ, Michael},
   title={A {$T(b)$} theorem with remarks on analytic capacity and the {C}auchy integral},
   journal={Colloq. Math.},
   volume={60/61},
   date={1990},
   number={2},
   pages={601--628},
   issn={0010-1354},
   review={\MR{1096400}},
   doi={10.4064/cm-60-61-2-601-628},
   }

\bib{DasHam98}{article}{
   author={Daskalopoulos, P},
   author={Hamilton, R.},
   title={Regularity of the free boundary for the porous medium equation},
   journal={J. Amer. Math. Soc.},
   volume={11},
   date={1998},
   number={4},
   pages={899--965},
   issn={0894-0347},
   review={\MR{1623198}},
   doi={S0894-0347-98-00277-X},
   }

\bib{DasHamLee01}{article}{
   author={Daskalopoulos, P},
   author={Hamilton, R.},
   author={Lee, K.},
   title={All time {$C^\infty$}-regularity of the interface in degenerate diffusion: a geometric approach},
   journal={Duke Math. J.},
   volume={108},
   date={2001},
   number={2},
   pages={295--327},
   issn={0012-7094},
   review={\MR{1833393}},
   doi={10.1215/S0012-7094-01-10824-7},
   }

\bib{DeSSav15}{article}{
   author={De Silva, Daniela},
   author={Savin, Ovidiu},
   title={A note on higher regularity boundary {H}arnack inequality},
   journal={Discrete Contin. Dyn. Syst.},
   volume={35},
   date={2015},
   number={12},
   pages={6155--6163},
   issn={1078-0947},
   review={\MR{3393271}},
   doi={10.3934/dcds.2015.35.6155},
   }

\bib{Don12}{article}{
   author={Dong, Hongjie},
   title={Gradient estimates for parabolic and elliptic systems from linear laminates},
   journal={Arch. Ration. Mech. Anal.},
   volume={205},
   date={2012},
   number={1},
   pages={119--149},
   issn={0003-9527},
   review={\MR{2927619}},
   doi={10.1007/s00205-012-0501-z},
}

\bib{DonEscKim18}{article}{
   author={Dong, Hongjie},
   author={Escauriaza, Luis},
   author={Kim, Seick},
   title={On {$C^1$}, {$C^2$}, and weak type-{$(1,1)$} estimates for linear elliptic operators: part {II}},
   journal={Math. Ann.},
   volume={370},
   date={2018},
   number={1-2},
   pages={447--489},
   issn={0025-5831},
   review={\MR{3747493}},
   doi={10.1007/s00208-017-1603-6},
}

\bib{DonEscKim21}{article}{
   author={Dong, Hongjie},
   author={Escauriaza, Luis},
   author={Kim, Seick},
   title={On {$C^{1/2,1}$}, {$C^{1,2}$}, and {$C^{0,0}$} estimates for linear parabolic operators},
   journal={J. Evol. Equ.},
   volume={21},
   date={2021},
   number={4},
   pages={4641--4702},
   issn={1424-3199},
   review={\MR{4350583}},
   doi={10.1007/s00028-021-00729-8},
}

\bib{DonJeoVit23}{article}{
   author={Dong, Hongjie},
   author={Jeon, Seongmin},
   author={Vita, Stefano},
   title={Schauder type estimates for degenerate or singular elliptic equations with DMO coefficients},
   journal={Calc. Var. Partial Differential Equations},
   date={2024},
   number={9},
   pages={Paper No. 239, 42},
   issn={0944-2669,1432-0835},
   review={\MR{4821888}},
   doi={10.1007/s00526-024-02840-3},
}

\bib{DonPha23}{article}{
   author={Dong, Hongjie},
   author={Phan, Tuoc},
   title={On parabolic and elliptic equations with singular or degenerate coefficients},
   journal={Indiana Univ. Math. J.},
   volume={72},
   date={2023},
   number={4},
   pages={1461--1502},
   issn={0022-2518},
   review={\MR{4637368}},
}

\bib{DonXu19}{article}{
   author={Dong, Hongjie},
   author={Xu, Longjuan},
   title={Gradient estimates for divergence form elliptic systems arising from composite material},
   journal={SIAM J. Math. Anal.},
   volume={51},
   date={2019},
   number={3},
   pages={2444--2478},
   issn={0036-1410},
   review={\MR{3961984}},
   doi={10.1137/18M1226658},
}

\bib{DonXu21}{article}{
   author={Dong, Hongjie},
   author={Xu, Longjuan},
   title={Gradient estimates for divergence form parabolic systems from composite materials},
   journal={Calc. Var. Partial Differential Equations},
   volume={60},
   date={2021},
   number={3},
   pages={Paper No. 98, 43},
   issn={0944-2669},
   review={\MR{4249876}},
   doi={10.1007/s00526-021-01927-5},
}

\bib{FeeCam14}{article}{
   author={Feehan, Paul M. N.},
   author={Pop, Camelia A.},
   title={Schauder a priori estimates and regularity of solutions to boundary-degenerate elliptic linear second-order partial differential equations},
   journal={J. Differential Equations},
   volume={256},
   date={2014},
   number={3},
   pages={895--956},
   issn={0022-0396},
   review={\MR{3128929}},
   doi={10.1016/j.jde.2013.08.012},
}

\bib{Gia83}{book}{
   author={Giaquinta, Mariano},
   title={Multiple integrals in the calculus of variations and nonlinear elliptic systems},
   series = {Annals of Mathematics Studies},
   volume={105},
   publisher = {Princeton University Press, Princeton, NJ},
   date={1983},
   pages={vii+297},
   isbn={0-691-08330-4; 0-691-08331-2},
   review={\MR{717034}},
}

\bib{Gia93}{book}{
   author={Giaquinta, Mariano},
   title={Introduction to regularity theory for nonlinear elliptic systems},
   series = {Lectures in Mathematics ETH Z\"{u}rich},
   publisher = {Birkh\"{a}user Verlag, Basel},
   date={1993},
   pages={viii+131},
   isbn={3-7643-2879-7},
   review={\MR{1239172}},
}

\bib{HanLin97}{book}{
   author={Han, Qing},
   author={Lin, Fanghua},
   title={Elliptic partial differential equations},
   series={Courant Lecture Notes in Mathematics},
   volume={1},
   publisher={New York University, Courant Institute of Mathematical
   Sciences, New York; American Mathematical Society, Providence, RI},
   date={1997},
   pages={x+144},
   isbn={0-9658703-0-8},
   isbn={0-8218-2691-3},
   review={\MR{1669352}},
}

\bib{Hes93}{article}{
   author={Heston, Steven L.},
   title={A closed-form solution for options with stochastic volatility with applications to bond and currency options},
   journal={Rev. Financ. Stud.},
   volume={6},
   date={1993},
   pages={327--343},
   issn={0893-9454},
   review={\MR{3929676}},
   doi={10.1093/rfs/6.2.327},
}

\bib{JeoVit24}{article}{
   author={Jeon, Seongmin},
   author={Vita, Stefano},
   title={Higher order boundary Harnack principles in Dini type domains},
   journal={J. Differential Equations},
   volume={412},
   date={2024},
   pages={808-856},
   doi={10.1016/j.jde.2024.08.059},
}

\bib{Kry07}{article}{
   author={Krylov, N. V.},
   title={Parabolic and elliptic equations with {VMO} coefficients},
   journal={Comm. Partial Differential Equations},
   volume={32},
   date={2007},
   pages={453--475},
   issn={0360-5302},
   review={\MR{2304157}},
   doi={10.1080/03605300600781626},
}

\bib{Kuk22}{article}{
   author={Kukuljan, Teo},
   title={Higher order parabolic boundary Harnack inequality in $C^1$ and
   $C^{k,\alpha}$ domains},
   journal={Discrete Contin. Dyn. Syst.},
   volume={42},
   date={2022},
   number={6},
   pages={2667--2698},
   issn={1078-0947},
   review={\MR{4421508}},
   doi={10.3934/dcds.2021207},
}

\bib{Li17}{article}{
   author={Li, Yanyan},
   title={On the {$C^1$} regularity of solutions to divergence form elliptic systems with {D}ini-continuous coefficients},
   journal={Chinese Ann. Math. Ser. B},
   volume={38},
   date={2017},
   pages={489--496},
   issn={0252-9599},
   review={\MR{3615500}},
   doi={10.1007/s11401-017-1079-4},
}

\bib{Ste70}{book}{
   author={Stein, Elias M.},
   title={Singular integrals and differentiability properties of functions},
   series = {Princeton Mathematical Series},
   publisher = {Princeton University Press, Princeton, NJ},
   date={1970},
   pages={xiv+290},
   review={\MR{290095}},
}

\bib{Ste93}{book}{
   author={Stein, Elias M.},
   title={Harmonic analysis: real-variable methods, orthogonality, and oscillatory integrals},
   series = {Princeton Mathematical Series},
   publisher = {Princeton University Press, Princeton, NJ},
   date={1993},
   pages={xiv+695},
   isbn={0-691-03216-5},
   review={\MR{1232192}},
}

\bib{StiTor17}{article}{
   author={Stinga, Pablo Ra\'{u}l},
   author={Torrea, Jos\'{e} L.},
   title={Regularity theory and extension problem for fractional nonlocal parabolic equations and the master equation},
   journal={SIAM J. Math. Anal.},
   volume={49},
   date={2017},
   pages={3893--3924},
   issn={0036-1410},
   review={\MR{3709888}},
   doi={10.1137/16M1104317},
}

 \bib{TerTorVit22}{article}{
   author={Terracini, Susanna},
   author={Tortone, Giorgio},
   author={Vita, Stefano},
   title={Higher order boundary Harnack principle via degenerate equations},
   journal={Arch. Ration. Mech. Anal.},
   volume={248},
   date={2024},
   number={2},
   pages={Paper No. 29, 44},
   issn={0003-9527},
   review={\MR{4726059}},
   doi={10.1007/s00205-024-01973-1},
}

\end{biblist}
\end{bibdiv}
\end{document}